\newtheorem{thm}{Theorem}[section]
 \newtheorem{cor}[thm]{Corollary}
 \newtheorem{lem}[thm]{Lemma}
 \newtheorem{prop}[thm]{Proposition}
 \theoremstyle{definition}
 \newtheorem{defn}{Definition}[section]
 \theoremstyle{remark}
 \newtheorem{rem}{Remark}[section]
   \newtheorem{ex}[thm]{Example}
  \newtheorem{conjecture}{Conjecture}
 \numberwithin{equation}{section}
\DeclareMathOperator{\im}{Im}
\DeclareMathOperator{\re}{Re}
\DeclareMathOperator{\cone}{cone}
\DeclareMathOperator{\Ext}{ext}
\DeclareMathOperator{\Int}{int}
\DeclareMathOperator{\supp}{supp}
\DeclareMathOperator{\meas}{meas}
\DeclareMathOperator{\sgn}{sgn}
\DeclareMathOperator{\Bd}{bd}
\DeclareMathOperator{\Arctan}{Arctan}
\DeclareMathOperator*{\esssup}{ess\,sup}
\DeclareMathOperator*{\essinf}{ess\,inf}
\DeclareMathOperator{\En}{En}
\DeclareMathOperator{\Dr}{Dr}
\DeclareMathOperator{\pr}{pr}
\def\RR{\mathbb R}
\def\CC{\mathbb C}
\def\NN{\mathbb N}
\def\ZZ{\mathbb Z}
\def\DD{\mathbb D}
\def\BB{\mathbb B}
\def\A{\mathbb{A}}
\def\vphi{\varphi}
\def\de{\delta}
\def\ep{\varepsilon}
\def\pa{\partial}
\def\om{\omega}
\def\Si{\Sigma}
\def\Om{\Omega}
\def\La{\Lambda}
\def\wt{\widetilde}
\def\ii{\mathrm{i}}
\def\dd{\mathrm{d}}
\def\nl{\mathrm{nl}}
\def\D{\mathcal{D}}
\def\B{\mathcal{B}}
\def\Z{\mathcal{Z}}
\def\M{\mathcal{M}}
\def\ChiCpl{\chi_{_{\scriptstyle \CC_+}}}
\def\Setim{\Sigma_{\mathrm{im}}}
\def\mult{\mathrm{mult}}
\def\Mes{\mathbb{M}}
\def\modn{\hspace{-10pt}\mod}
\def\wlim{\mathrm{w}^* \text{-}  \lim}
\def\Sset{\mathcal{S}}
\begin{document}
\title{Nonlinear bang-bang eigenproblems and optimization of resonances in layered cavities}
\author{} 
\date{}
\maketitle
{\center 
{\large 
Illya M. Karabash $^{\text{a,b,c,*}}$, Olga M. Logachova $^{\text{d}}$, Ievgen V. Verbytskyi $^{\text{e}}$ 
\\[4mm]
}} 

{\small \noindent
$^{\text{a}}$
Institut f{\"u}r Mathematik, Universit{\"a}t zu L{\"u}beck, 
 Haus 64, 3. OG Ratzeburger Allee 160, D-23562 L{\"u}beck, Germany\\[1mm]
 $^{\text{b}}$ Institute of Applied Mathematics and Mechanics of NAS of Ukraine,
Dobrovolskogo st. 1, Slovyans'k 84100, Ukraine\\[1mm]
$^{\text{c}}$  Donetsk National University, Department of Mathematical Analysis and Differential Equations, 
Faculty of Mathematics and Information Technology,  600-richya st. 21, Vinnytsia 21021, Ukraine\\[1mm]
$^{\text{d}}$  
Siberian State University of Geosystems and Technologies, Department of Higher Mathematics, Institute of
Geodesy and Management, Plakhotnogo st. 10, Novosibirsk 630108, Russia \\[1mm]
$^{\text{e}}$ National Technical University of Ukraine "Kyiv Polytechnic Institute", 
Department of Industrial Electronics, Faculty of Electronics,  Politekhnichna st. 16, block 12, Kyiv 03056, Ukraine\\[1mm] 
$^{\text{*}}$ Corresponding author. E-mail: i.m.karabash@gmail.com\\[1cm]
}

\begin{abstract}
Quasi-normal-eigenvalue optimization is studied under constraints
$b_1(x) \le B(x) \le b_2 (x)$ on structure functions $B$ of 2-side open optical or mechanical 
resonators. We prove existence of various optimizers and provide an example when different structures generate the same optimal quasi-(normal-)eigenvalue.
To show that quasi-eigenvalues  locally optimal in various senses 
are in the spectrum $\Si^\nl$ of 
the bang-bang eigenproblem $y''  = - \om^2 y  [ b_1 + (b_2 - b_1) \ChiCpl (y^2 ) ]$, where
$\ChiCpl (\cdot)$ is the indicator function of $\CC_+$,  we obtain 
a variational characterization of  $\Si^\nl$ in terms of quasi-eigenvalue perturbations. To address the  minimization of the decay rate $| \im \om |$, 
we study the bang-bang equation and explain how it excludes an unknown optimal $B$ from the optimization process.  Computing one of minimal decay structures for 1-side open settings, we show that it resembles gradually size-modulated 1-D stack cavities introduced recently in Optical Engineering. In 2-side open symmetric settings, our example has an additional centered defect. Nonexistence 
of global decay rate minimizers is discussed.
\end{abstract}

\quad\\
MSC-classes: 49R05, 
47J10, 
35B34, 
49M05, 
46N10, 
90C29, 
47A55,  
78M50,  
70J10,  
34L15
\\
\quad\\
Keywords: high-Q, photonic crystal, quality-factor, Pareto optimal structural design, resonance perturbation  \pagebreak

\tableofcontents

\pagebreak 

\section{Introduction}
\label{s Intro}

The paper is devoted to the analytical and numerical study of two connected questions: 
nonlinear eigenproblems with a bang-bang term and optimization of quasi-(normal-)eigenvalues 
in optical and mechanical resonators.

A leaky cavity (or resonator) is a region of space within which the
electromagnetic (or acoustic) field is well confined, but not completely
confined. Cavities supporting eigenmodes with high quality factor (high-Q) are needed for 
a number of applications including Cavity Quantum Electrodynamics, Optical Engineering, 
and Microscopy  (see e.g. \cite{VLMS02,PDBRS02,PMS02,Sch11}).
The recent  progress in fabrication of small size optical resonators  
\cite{AASN03, KTTKRN10, LJ13} attracted considerable interest to numerical 
\cite{KS08,HBKW08,BPSchZsch11,GCh13,LJ13,MPBKLR13} and analytical   \cite{Ka13,Ka13_KN,Ka14} aspects of resonance optimization.

Optical  quasi-eigenvalues (or resonances) can be mathematically defined as 
eigenvalues $\om$ corresponding to outgoing solutions of 
the time-harmonic Maxwell equation  \cite{S68,LJ13}. 
Because of the leakage, resonant eigenoscillations are expected to decay exponentially
in time. The modulus of the imaginary part $|\im \om|$ of a quasi-eigenvalue corresponds to \emph{the decay rate} of the standing wave,
the real part $\re \om$ to \emph{the frequency of oscillations}. 
In Engineering studies, the confinement of energy for a particular resonant eigenmode is measured by \emph{the quality factor} $Q (\om) = \frac{\re \om}{-2 \im \omega}$.

Since light is essentially difficult to localize it is hard to realize small-sized optical 
cavities with strong light confinement.  
One of the reasons of active involvement of Numerics in this field is that 
the initial progress with fabrication of high-Q cavities based on 2-D photonic crystals 
was achieved with  the help of computer simulations of eigenmodes  
for perspective versions of design \cite{AASN03}. 
The interest  to multilayer structures with 1-D geometry has soon 
returned \cite{NKT08, MHvG08, KTTKRN10, BPSchZsch11,MPBKLR13},  
partially because of cheaper fabrication process, partially because computations 
in this case are simpler.  For idealized 1-D cavities with infinite 
layers and under the assumption of normally passing electromagnetic (EM) waves,   the Maxwell system can be reduced to a 1+1  wave equation of nonhomogeneous string $B(x) \pa_t^2 u (x,t) = \pa_x^2 u (x,t) $ (see e.g. \cite{U75,Sch11}). The analogy with a string is widely used in Optical Engineering \cite{LSL73,MvdBY01} (and possibly goes back to Lord Rayleigh \cite[Chapter 4]{JJWM08}).
 
Quasi-eigenvalues associated with $B(x) \pa_t^2 u (x,t) = \pa_x^2 u (x,t) $ can be defined via the eigenproblem consisting of  the time-harmonic equation 
\begin{eqnarray}
y''(x) & = & -\om^2 B(x) y (x)  \qquad \text{a.e. on $ a_1 < x < a_2 $,} 
 \label{e ep} 
\end{eqnarray}
equipped with radiation boundary conditions.
In this 1-D case, the radiation conditions take the $\om$-dependent form of local conditions at the endpoints 
\begin{eqnarray}
\frac{y' (a_1)}{-i\om} & = & \nu_1 y(a_1), \label{e bc a-} \\
\frac{y' (a_2)}{i\om\nu_2} & = & y(a_2) \qquad \text{(see Section \ref{s basics} for details)}. \label{e bc a+}
\end{eqnarray}

The following properties of the \emph{set $\Sigma (B)$ of quasi-eigenvalues associated with the coefficient $B(\cdot)$} are important for the
present paper: $\Sigma (B)$ is a subset of the lower complex half-plane $\CC_- = \{z \in \CC : \im z < 0 \}$ and is symmetric with respect to
(w.r.t.) the imaginary axis $\ii \RR$, quasi-eigenvalues are isolated and of finite algebraic multiplicity, $\infty$ is their only
possible accumulation point (see Section \ref{s mult and analyt} and, for one-side open cases,
\cite{KN79,KN89,CZ95,GP97,PvdM01}).

From mathematical point of view, estimates on resonances associated with various wave equations have being   
studied in Mathematical Physics at least since 1970s \cite{LP71} and is still an active area of 
research (see e.g. \cite{AT11} and references therein). Optimization of resonances may be seen as an attempt to obtain 
sharp estimates of this kind. This point of view  and the study of resonances associated with 
random Schr\"odinger operators were initial sources of 
the interest to the problem \cite{H82,HS86}. It seems that, up to now, 
sharp estimates have been obtained only in low frequency regions for 
a few 1-D models involving total mass type constraints \cite{Ka13_KN,Ka14} (on a somewhat 
different problem involving damping, see \cite{CZ94} 
and the discussion in \cite{CO96}). 
 Among other variational problems for eigenvalues,
the optimization of resonances can be classified as the nonselfadjoint spectral optimization.
It contains essentially new effects and difficulties in comparison with selfadjoint cases (see \cite{CZ95,CO96,BLO01,Ka13,Ka14}).

Since quasi-eigenvalues $\om$ are complex numbers, one of the ways 
to formulate related variational problems is to interpret $\om$ as an $\RR^2$-vector 
and to take the point of view of the optimization theory for vector-valued cost functions 
(Pareto optimization) \cite{Ka14}. 
The approach of  \cite{Ka14}  is a development of that of \cite{Ka13}, 
where the engineering problem of the high-Q design for one-side open multilayer cavities was considered analytically. 
To provide a simple and simultaneously rigorous formulation, 
the papers \cite{Ka13_KN, Ka13} restate the problem of 
high-Q design in terms of the minimization of the decay rate $|\im \om|$ 
for quasi-eigenvalues with a fixed frequency $\re \om = \alpha$. 
Then the set of all \emph{quasi-eigenvalues with minimal possible decay for their particular frequencies}
forms a \emph{Pareto optimal frontier}  (see Fig.~\ref{f drawing} in Section \ref{s opt problem} and  \cite[Fig.~1]{Ka14}).  

The quasi-eigenvalues are supposed to be generated by cavities with structures belonging to a certain admissible family. 
In the present paper, the resonator's \emph{structure} is described by 
an $L^1$-function $B(x)$ on $[a_1,a_2]$ ($-\infty<a_1<a_2 < +\infty$), which represents 
spatially varying dielectric permittivities in the case of optical cavity, or the linear density 
in Mechanics settings. Boundary conditions at the endpoints $a_{1,2}$, 
which describe the leakage of energy due to radiation or damping, 
include two parameters $\nu_1, \nu_2 \in [0,+\infty]$. 
These parameters are assumed to be fixed, and so, they do not participate in 
the optimization process. The admissible family of the structures $\A$ 
(over which the optimization is performed) is defined by the functional 
side constraints $b_1(x) \le B(x) \le b_2 (x)$ a.e. on $(a_1,a_2)$. 
This allows us to consider an important for applications situation when the 
resonator itself is only a part of a more complex device \cite{MHvG08,LJ13}.  
That is, sometimes only some parts of the system are  suitable  for modifications to achieve better resonant properties. The part of the device that is not subjected to optimization corresponds to the intervals where $b_1 (x) = b_2 (x)$. 

The connection between quasi-eigenvalues of minimal decay and nonlinear 
eigenproblems was established recently in \cite{Ka13,Ka13_Opt}
for the case of 1-D models of one-side open resonators and 
constant side constraints $b_1$ and $ b_2 $.
It was proved that the set of Pareto optimal  quasi-eigenvalues of minimal decay belongs to the spectrum associated with the    
equation 
$y'' (x)  = - \om^2 y(x)  \left[ b_1 + [b_2 - b_1] \ChiCpl (y^2 (x) ) \right]$.
The nonlinearity contains the function of bang-bang type 
$\B (y) (x) = \left[ b_1 + (b_2 - b_1) \ChiCpl (y^2 ) \right]$ taken of the square of 
complex-valued solution $y$. Here and below $\ChiCpl (\cdot)$ is the indicator (characteristic)
function 
 of the open upper complex half-plane $\CC_+ = \{ z : \im z > 0\}$,
i.e., 
\[
\text{$\ChiCpl (z) = 1$ if $z \in \CC_+$, and $\ChiCpl (z) = 0$ if $z \in \overline{\CC_-} = \{z : \im z \le 0 \}$. } 
\] 
So $\B (y) ( \cdot) $ equals $b_1 (\cdot)$ on the $x$-intervals  where $\im y^2 (x) \le 0 $, and $\B (y) ( \cdot) = b_2 (\cdot)$ for $x$-intervals where $\im y^2 (x) > 0$.
This resembles to some extend bang-bang equations arising in the optimal control theory when the control is restricted by a lower and an upper bound. However, the interpretation of the above equation 
from the  point of view of control is not found yet.

Since the nonlinear equation derived in \cite{Ka13} excludes infinite-dimensional unknown structure $B$, 
it may essentially simplify computation of 
optimal resonances in comparison with step-by-step adjustment procedures for $B$ based on computation 
of electromagnetic field on each step. 

Development and analytic justification of a numerical procedure based on the bang-bang eigenproblem
is one of the goals of our research.
However, the implementation of this new computational approach requires better 
understanding of the shapes of the nonlinear spectrum 
and of the Pareto optimal set, as well as their projections to the real line (see Section \ref{ss ex A}). 
Our method also requires the study of properties of solutions of the bang-bang equation (\ref{e nonlin i}), 
which are considered in Section 
\ref{ss Sinl=0} and Appendix \ref{ss uni nl}.

We show that not only quasi-eigenvalues of minimal decay, but also quasi-eigenvalues locally extremal in a much wider sense  belong to the set $\Si^\nl$ of eigenvalues associated with the nonlinear equation
\begin{equation} \label{e nonlin i}
y'' (x)  = - \om^2 y  \left[ b_1 (x) + [b_2(x) - b_1(x)] \ChiCpl (y^2 (x) ) \right] , \quad a_1 <x<a_2 
\end{equation}
(which includes the functional side constraints $b_{1,2} (\cdot)$ and is supposed to be equipped with the radiation boundary conditions). 
In particular, we show that \emph{nonlinear spectrum} $\Si^\nl$ contains the 
nonimaginary part $\Bd \Si [\A] \setminus \ii \RR$ of the boundary $\Bd \Si [\A]$ of 
the \emph{set of admissible quasi-eigenvalues} 
\[
\Si [\A] := \bigcup_{B \in \A} \Si (B) \qquad \text{ (see Sections \ref{s basics} and \ref{s Opt=>nl}).}
\] 
 To achieve this, we provide a variational characterization of 
the part of $\Si^\nl$ lying in $\CC \setminus \ii \RR$ (Theorem \ref{t char Si nl}). 
We also show that $\Bd \Si [\A]$ and so $\Si^\nl$ are, at least in some cases, 
essentially `greater' than  the set of quasi-eigenvalues of minimal decay 
(see Sections \ref{s small cont} and \ref{s NumEx} and Example \ref{ex interior}).

The quasi-eigenvalue of minimal decay for a particular frequency $\alpha$ 
can be found as the closest to $\RR$ point of the intersection of 
the nonlinear spectrum with the line $\alpha + \ii \RR$  (see Section \ref{s finding}). 
Then, corresponding optimal structures $B$ can also be found from the bang-bang eigenproblem. 
In the end of the paper, we support our study by a numerical experiment (see Fig. \ref{f Snl} (a) and Tables 1-2 in Section \ref{s NumEx}).

To prove rigorously that points of the nonlinear spectrum are not necessarily quasi-eigenvalues 
of minimal decay, the case of small dielectric contrast is considered in Section \ref{s small cont}.
In Optics literature the study of small contrast is often serves as a base of intuitive understanding 
of qualitative effects (see e.g. \cite[Chapter 4]{JJWM08}). 
In the optimization settings, small contrast 
means that the constraints $b_1$ and $b_2$ are close to each other. 
To measure the corresponding distance we use the $L^1$-norm $\|b_2 - b_1 \|_1$ since this 
choice is flexible enough to cover both the small changes in permittivity function 
(when $\|b_2 - b_1\|_\infty$ is small) and  fluctuations in the widths and positions of layers.  
This gives a connection with  models involving uncertainties or random deviations in the resonator's 
structure 
$B(x)$. 
Indeed, for stochastic models, all possible positions of random resonances (the supports of probabilistic distributions) can be localized by the set of admissible resonances $\Si [\A]$ of a properly posed optimization problem. In practical situations uncertainties or fluctuations in the cavity's structure may model  unintended deviations in fabrication process \cite{WMTV09}, incorporation of biomolecular thin films in X-ray multi-layer cavities \cite{PMS02}, and various Physics effects affecting EM properties of materials \cite{JShLV00}. 

Pure imaginary quasi-eigenvalues (i.e., the case $\om \in \ii \RR$)  correspond to 
the over-damping and critical damping effects (in Mechanics settings). 
For them, optimization partially resembles that for selfadjoint problems \cite{CZ94,CZ95,CO96,Ka13}. 
The pure imaginary case is considered separately in Section \ref{s 0fr}. It provides, as a by-product, an example of two different optimal structures associated with one quasi-eigenvalue of minimal decay. 
This partially answers  the uniqueness of optimizer question discussed in \cite{HS86,Ka13_KN,Ka14}.

The application of steepest ascent numerical procedures to 
the search of quasi-eigenvalues with locally minimal decay rate (or  locally maximal quality factor) 
attracted a considerable attention \cite{HBKW08,KS08,Sch11}. It seems that Corollary \ref{c const b12 0 adm} (i) and 
Theorem \ref{t small diel contrast} provide 
\emph{first rigorous proofs of existence of such local minimizers}. 
(The proof of  Theorem \ref{t small diel contrast} can be easily transformed to consider 
local maximizers for the Q-factor). 
\emph{The existence of global minimizer for decay rate} 
(over all $B \in \A$ and $\om \in \CC_-$ such that $\om \in \Si (B)$) \emph{is questionable}. Our opinion is that it is difficult to expect such existence in most of practical situations, see the discussion in Section \ref{ss GlobMin}. 

The discussion section (Section \ref{s dis}) also compares the results 
of the numerical experiment of Section \ref{s NumEx} with optimal 
design suggestions of engineering and numerical optimization papers. 
Our conclusion is that the gradually size-modulated 
1-D stack designs of \cite{NKT08,KTTKRN10,BPSchZsch11} are reasonable, 
but for symmetric resonators, it is possible that \emph{an additional defect in the center may 
improve the confinement of energy.}

 The results of the paper were partially reported at the International Congress of Mathematicians 
 (Seoul ICM 2014) and in several seminar and workshop talks at the University of L{\"{u}}beck,
the Institute of Mathematics (at Kiev), 
B.~Verkin Institute for Low Temperature Physics and Engineering, and
Pidstryhach Institute for Applied Problems of Mechanics and Mathematics.

\textbf{Notation}. 
By $ \lceil x \rceil $ and $\lfloor x \rfloor$ the ceiling and floor functions are denoted, i.e., the smallest integer not less than $x$ and the greatest integer not greater than $x$, respectively.

The following sets of real and complex numbers are used: 
open half-lines $\RR_\pm = \{ x \in \RR: \pm x >0 \}$, 
and open discs $\DD_\epsilon (\zeta) := \{z \in \CC : | z - \zeta | < \epsilon \}$ with the center at $\zeta$
and radius $\epsilon$.
The Lebesgue measure of a set $S \subset \RR$ is denoted $\meas S$.

$L_{\CC(\RR)}^p (a_1,a_2) $ are the Lebesgue spaces of complex- (resp., real-) valued functions and
\[
W^{k,p} [a_1,a_2] := \{ y \in L_\CC^p (a_1,a_2) \ : \ \pa_x^j y \in L_\CC^p (a_1,a_2), \ \ 1 \leq j \leq k \}
\]
are Sobolev spaces. The corresponding standard norms are denoted by $\| \cdot \|_p$ and $\| \cdot \|_{W^{k,p}}$. The space of continuous
complex valued functions with the uniform norm is denoted by $C
[a_1,a_2]$.

Under the support $\supp B$ of $B \in L^1 (a_1,a_2)$ we understood the topological
support of the absolutely continuous measure $|B(x)|\dd x$ on $[a_1,a_2]$, i.e.,
$\supp B$ is the smallest closed set $S$ such that $B (x) = 0 $ a.e. on $[a_1,a_2] \setminus S$.

For basic definitions of convex analysis we refer to \cite{R70}.
Let $S$ be a subset of a linear space $U$ over $\CC$ (including the case $U=\CC$).
For $u_0 \in U$ and $z \in \CC$, 
$
z S  + u_0 := \{ zu + u_0 \, : \, u \in S \}
$. 
The convex cone generated by $S$ (the
set of all nonnegative linear combinations of elements of $S$) is denoted by $\cone S$.
Open balls in a normed space $U$ are denoted by
\[
\BB_\epsilon (u_0) =
\BB_\epsilon (u_0; U) := \{u \in U \, : \, \| u - u_0 \|_U < \epsilon
\}.
\]
The closure of a set $S$ (in the norm topology) is denoted by $\overline{S}$, the boundary of $S$ by $\Bd S$.
For a function $f$ defined on $S$, $f[S]$ is the image of $S$.
By $\pa_x f$, $\pa_z f $, etc., we denote (ordinary or partial)
derivatives  w.r.t. $x$, $z$, etc. The function $f_+$ is `the positive part' of f, i.e., \linebreak
$ f_+ (x):=  \left\{ \begin{array}{ll}
f(x) &,  \text{ if } f (x) \ge 0 \\
0 &,  \text{ if } f (x) < 0
\end{array}
\right.
\ ; \quad$
$\chi_S $ is an indicator function of a set $S $, i.e., $\chi_S
(x) = 1$ when $x \in S$, and $\chi_S (x) = 0$ when $x \not
\in S$. 



\section{Basic settings, quasi-eigenvalues and admissible sets}
\label{s basics}

The boundary conditions (\ref{e bc a-}) and (\ref{e bc a+}) at $a_{1,2} \in \RR$ involve the (extended) constants $\nu_{1,2}$, which throughout the paper are assumed to satisfy 
\begin{gather} \label{e nu as}
\nu_1 \in [0,+\infty), \qquad \nu_2 \in (0,+\infty], \qquad |\nu_1| + |1/\nu_2| \neq 0, \quad
\text{ and } \nu_1 \le \nu_2  .
\end{gather}
When damping coefficients $\nu_{1,2}$ are fixed, we can define the \emph{set of quasi-eigenvalues} 
$\Si (B)$ associated with the structure $B$ as the set of $\om \in \CC \setminus \{ 0 \}$ such that  
eigenproblem (\ref{e ep}), (\ref{e bc a-}), (\ref{e bc a+}) has a nontrivial solution $y \in W^{2,1} [a_1,a_2]$ (i.e., a solution that is not identically zero).
This solution $y $ is called a (quasi-normal) \emph{mode}.
Several other names for $\om$ are used, sometimes in slightly different settings: scattering poles \cite{LP71}, dissipation frequencies \cite{KN79,KN89}, resonances \cite{F97,KS08}, quasi-normal levels (in the Quantum Physics literature). 

The function $B\in L_\RR^1 (a_1,a_2)$ in (\ref{e ep}) describes the \emph{structure} of a non-homogeneous medium varying in $x$-direction. In the case of optical cavity, $B(x_0)$ corresponds to permittivity of the transparent dielectric material of the layer with the $x$-coordinate equal to $x_0$ (if the light speed in vacuum is normalized to be $1$ or if one works with the complex wave number instead of the complex angular frequency $\om$). In Mechanics models involving the equation of nonhomogeneous string, the function $B$ is the varying linear density of the string (if the tension equals 1). So, in these models,  $B$ is supposed to be a.e. nonnegative, and we will keep this assumption in the context of optimization problems. However, in Section \ref{s mult and analyt}, the definition of quasi-eigenvalues will be extended to complex-valued coefficient $B$ for the needs of perturbation theory of Section \ref{s Pert th and weak cont}.

When $\nu_j$ is in $(0,+\infty)$, the corresponding boundary condition is dissipative and 
describes either linear damping at $a_j $ or the radiation of waves into 
outer regions (see the examples in Section \ref{ss ex A} for details). Our settings for a 
1-D resonator is a slightly generalized version of \cite{MHvG08,SSH09}. 
We allow $B$ to be equal to $0$ on certain sets (the massless string approximation). 
We also include the cases when one of the boundary conditions is conservative 
(and so the resonator is only one-side open). In particular, when $\nu_2 = +\infty$, 
we suppose that $1/\nu_2 = 0$, and then $(\ref{e bc a+})$ turns into 
the Dirichlet condition $y (a_2) = 0$. Note that the assumption $\nu_1 \le \nu_2$ does not restrict the generality since, in the case $\nu_1 > \nu_2$, one can use the change of variable $\wt x = -x$.

While most of mathematical studies were devoted to one-side open or symmetric resonators (mathematically, these two cases are almost equivalent, see Example \ref{ex sym}), 
contemporary Optics applications often involve the two-side open case 
\cite{NKT08,SSH09,KTTKRN10,BPSchZsch11,MPBKLR13}. In the present paper, we consider one- and two-side open resonators in a unified way.  Besides the importance for engineering applications, the study of two-side open resonators brings new mathematical effects. 
In particular, the spatial phase $\arg y (x)$ of the complex-valued mode $y$ is not monotone in contrast to one-side open case. 
This effect, and the fact that the notion of switch points (see \cite{Ka13}) looses its natural sense when the constraint functions $b_{1,2}$ are allowed to be equal on a set of positive measure, lead in Section \ref{s Snl pert} to more detailed study of rotational properties of resonant modes and to more essential use of Convex Analysis in comparison with the technique of \cite{Ka13}. 

 We consider optimization over the following family  of \emph{admissible structures}
\begin{equation} \label{e A}
 \A :=   \{ B(x) \in L_\RR^1 (a_1,a_2) \ : \  b_1 (x) \le B(x) \le b_2 (x) \text{ a.e. on } (a_1,a_2) \},
\end{equation}
where $ b_{1,2} $ are certain Lebesgue integrable functions defined on $(a_1,a_2)$ such that
\begin{gather}
0 \le b_1 (x) \le b_2 (x) \text{ on } (a_1,a_2) \qquad \text{ and }  
\label{e b12 cond}
 \\
\meas E > 0 \text{,  where } E = \{ x \in (a_1,a_2) \ : \ b_1 (x) < b_2 (x)\}  .
\label{e setE}
\end{gather}

A complex number $\om$ is called \emph{an admissible quasi-eigenvalue} if it belongs to the set
$
\Si [\A] := \bigcup_{B \in \A} \Si (B) .
$
Keeping in mind applications, we will pay main attention to the problem of minimization of decay rate of an individual resonance $\om$ generated by a certain $B \in \A$.
However, from mathematical point of view it is more convenient to consider $\om$ itself as a cost function with the values in $\RR^2$ and to study extremal in various senses quasi-eigenvalues . One of such sets is the boundary $\Bd \Si [\A]$ of the \emph{set of admissible quasi-eigenvalues} $\Si [\A]$. 

Actually, $\om$ can not be considered directly as a functional of $B$ even locally due to 
multiplicity and splitting issues (see Sections \ref{ss mult}, Proposition \ref{p per k mult}, 
and the discussion in \cite{Ka14}). However, a rigorous approach to $\om (B)$ can 
be given via the set-valued map $B \mapsto \Si (B)$. Local extrema of such set-valued maps 
were introduced in \cite{Ka14} and will be the key tool in Section \ref{s Opt=>nl}. 
In such generalized settings, $\Bd \Si [\A]$ plays a role of a  set of 
generalized Pareto extremizers for the map $B \mapsto \Si (B)$ over $\A$.

\section{Properties of resonances and related maps}
\label{s mult and analyt}

\subsection{Characteristic determinant $F$ and definition of multiplicity}
\label{ss F}

In this section, an equivalent (but more convenient from the point of view of perturbation theory) definition of quasi-eigenvalues is given under more general assumption that $B \in L^1_\CC (a_1,a_2)$.

Denote by $\vphi (x ) = \vphi (x, z; B )$, $\psi(x) = \psi (x, z; B)$, and $\theta (x) =\theta(x, z; B )$ the solutions to
$\pa_x^2 y(x) = - z^2 B(x) y(x) $ satisfying
\begin{eqnarray} \label{e phi psi}
 \vphi (a_1, z; B )  = \pa_x \psi (a_1, z; B) = 1 , & \ \ & \pa_x
\vphi (a_1, z; B ) = \psi (a_1, z; B ) = 0 ,  \notag \\
\theta (a_1,z;B) = 1, & \ \ & \pa_x \theta (a_1,z;B) = -\ii z \nu_1 . \label{e th}
\end{eqnarray}

Obviously,
$\theta (x) =  \vphi (x) - \ii z \nu_1 \psi (x)$ and $\theta (x)$ is a unique solution to the
integral equation
\begin{eqnarray}
 y (x) & = & 1 - \ii z \nu_1 (x-a_1) - z^2 \ \int_{a_1}^x (x-s) \ y (s) \ B (s) \dd s ,
\quad a_1 \leq x \leq a_2. \label{e int ep th}
\end{eqnarray}

Recall that the set of quasi-eigenvalues corresponding to a structure $B$ (in short, quasi-eigenvalues of $B$)
is denoted by $\Sigma(B)$.
The following lemma is the integral reformulation of the quasi-eigenvalue problem.

\begin{lem} \label{l int}
A number $\om \in \CC$ belongs to $\Si (B)$ if and only if
there exists nontrivial $y (x) \in C [0,\ell]$
satisfying equation (\ref{e int ep th}) with $z = \om$ and the equality
\begin{equation} \label{VGUI}
y(a_2) + \frac{\nu_1}{\nu_2}-\frac{\ii \om}{\nu_2}\int_{a_1}^{a_2} y (s) \ B (s) \dd s =0.
\end{equation}
\end{lem}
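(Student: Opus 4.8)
The plan is to prove Lemma~\ref{l int} by reducing the original differential eigenproblem (\ref{e ep}), (\ref{e bc a-}), (\ref{e bc a+}) to the stated integral conditions via the fundamental system $\vphi, \psi, \theta$ introduced in (\ref{e phi psi})--(\ref{e th}). First I would recall the standard correspondence between $W^{2,1}[a_1,a_2]$ solutions of $\pa_x^2 y = -z^2 B y$ and $C[a_1,a_2]$ solutions of the Volterra integral equation obtained by integrating twice: a function $y \in W^{2,1}$ solves the differential equation with initial data $y(a_1) = y_0$, $y'(a_1) = y_1$ if and only if $y \in C[a_1,a_2]$ and $y(x) = y_0 + y_1(x-a_1) - z^2 \int_{a_1}^x (x-s) y(s) B(s)\,\dd s$; conversely any continuous solution of the integral equation is automatically in $W^{2,1}$ by differentiating. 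This is the routine part and I would state it briefly, perhaps citing it as classical.

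Next I would use the boundary condition (\ref{e bc a-}) at $a_1$ to pin down the initial data. If $\om \in \Si(B)$ with mode $y$, then (\ref{e bc a-}) reads $y'(a_1) = -\ii\om\nu_1 y(a_1)$; normalizing (a nontrivial mode must have $y(a_1) \neq 0$ unless $\nu_1$ forces otherwise — one should check this, but if $y(a_1) = 0$ then $y'(a_1) = 0$ too and $y \equiv 0$, contradiction, so indeed $y(a_1) \neq 0$ and we may scale $y(a_1) = 1$), we get $y = \theta(\cdot, \om; B)$, which by the observation after (\ref{e th}) is exactly the solution of the integral equation (\ref{e int ep th}) with $z = \om$. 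So the mode is forced to be $\theta$ up to a scalar, and (\ref{e int ep th}) is precisely the content of the equation plus the left boundary condition. It then remains to translate the right boundary condition (\ref{e bc a+}) into (\ref{VGUI}). Here I would compute $y'(a_2)$ from the integral equation by differentiating: $y'(x) = -\ii\om\nu_1 - \om^2 \int_{a_1}^x y(s) B(s)\,\dd s$, so $y'(a_2) = -\ii\om\nu_1 - \om^2 \int_{a_1}^{a_2} y(s) B(s)\,\dd s$. Condition (\ref{e bc a+}) is $y'(a_2) = \ii\om\nu_2 y(a_2)$; substituting the expression for $y'(a_2)$, dividing by $-\ii\om\nu_2$ (legitimate since $\om \neq 0$ and, in the case $\nu_2 = \infty$, one reads the condition as the Dirichlet limit and handles it by taking $1/\nu_2 = 0$ directly in (\ref{VGUI})), and rearranging yields exactly $y(a_2) + \nu_1/\nu_2 - (\ii\om/\nu_2)\int_{a_1}^{a_2} y(s) B(s)\,\dd s = 0$. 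The converse direction just runs the same computation backwards: given nontrivial $y \in C[a_1,a_2]$ solving (\ref{e int ep th}) and (\ref{VGUI}), the integral equation gives that $y \in W^{2,1}$ solves (\ref{e ep}) with $y(a_1) = 1$, $y'(a_1) = -\ii\om\nu_1$ (so (\ref{e bc a-}) holds), and (\ref{VGUI}) combined with the formula for $y'(a_2)$ recovers (\ref{e bc a+}).

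The main obstacle — though it is more of a bookkeeping subtlety than a genuine difficulty — is the careful treatment of the degenerate cases built into the standing assumption (\ref{e nu as}): the case $\nu_1 = 0$ (conservative left end, where $\theta$ still makes sense with $\pa_x\theta(a_1) = 0$ and the argument goes through unchanged), and especially the case $\nu_2 = +\infty$, where (\ref{e bc a+}) must be interpreted as $y(a_2) = 0$ and one cannot literally divide by $\nu_2$; one checks that setting $1/\nu_2 = 0$ in (\ref{VGUI}) reproduces exactly $y(a_2) = 0$, so the statement remains correct verbatim. I would also remark that nontriviality is preserved under the identification because $y = \theta$ and $\theta(a_1) = 1 \neq 0$, so a mode is never the zero function and conversely a nonzero continuous solution of (\ref{e int ep th}) has $y(a_1) = 1$. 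Beyond these case checks, every step is a direct substitution, so the proof is short.
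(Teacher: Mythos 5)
Your proof is correct and follows exactly the route the paper intends: the lemma is the routine Volterra reformulation (the paper states it without proof), and your steps — ruling out $y(a_1)=0$ by uniqueness for the initial value problem, normalizing $y(a_1)=1$ so that $y=\theta(\cdot,\om;B)$ solves (\ref{e int ep th}), computing $y'(a_2)=-\ii\om\nu_1-\om^2\int_{a_1}^{a_2}yB\,\dd s$ and converting (\ref{e bc a+}) into (\ref{VGUI}), with $1/\nu_2=0$ handling the Dirichlet case — are the intended argument. The one point you leave implicit is $\om=0$ in the ``if'' direction: since $\Si(B)\subset\CC\setminus\{0\}$ by definition (and both boundary conditions divide by $\om$), a complete proof should note that for $\om=0$ equation (\ref{e int ep th}) forces $y\equiv 1$ while (\ref{VGUI}) becomes $1+\nu_1/\nu_2=0$, impossible under (\ref{e nu as}); this is exactly the observation the paper records in the remark immediately following the lemma.
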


Note that in the integral settings there is no need to exclude separately the case $\om = 0$.
If $\om =0$, then it is easy to see that the problem consisting of (\ref{e int ep th})
with  $z = \om$  and (\ref{VGUI}) has no nontrivial solutions.

Clearly, $\Sigma (B)$ is the set of zeroes of the function $F (\cdot;B)$, where
\begin{equation} \label{e F}
F (z) = F (z; B) := \theta(a_2,z;B) + 
\frac{\nu_1}{\nu_2}-\frac{iz}{\nu_2}\int_{a_1}^{a_2} \theta(s,z;B) \ B (s) \dd s .
\end{equation}
It is easy to see that for $z \neq 0$,
\begin{equation} \label{e F th}
F (z; B) = \theta (a_2,z;B) + \frac{\ii \pa_x \theta (a_2,z;B)}{z\nu_2} .
\end{equation}
When $\nu_2 = \infty$, these formulas turns into $F (z; B) = \theta (a_2,z;B) $.

We say that a map $G : U_1 \to U_2$ between normed spaces $U_{1,2}$
is \emph{bounded-to-bounded} if the set $G[S]$ in $U_2$ is bounded for
any bounded $S$ in $U_1$.

\begin{lem} \label{l an}
$(i)$ The functional $F (z; B) $
is analytic on the Banach space $\CC \times L^1 (a_1,a_2)$.

$(ii)$ Let $1\le p \le \infty$. The mappings $(z, B) \mapsto \vphi (\cdot ,z; B)$, $(z, B) \mapsto \psi (\cdot ,z; B)$,
and $(z, B) \mapsto \theta (\cdot ,z; B)$ are
bounded-to-bounded and analytic from $\CC \times L_\CC^p (a_1,a_2)$ to
$W^{2,p} [a_1,a_2]$.
\end{lem}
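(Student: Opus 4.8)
\textbf{Proof plan for Lemma \ref{l an}.}

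The plan is to reduce everything to the integral equation (\ref{e int ep th}) for $\theta$ and to exploit the standard fact that a locally bounded, Gateaux-holomorphic map between complex Banach spaces is (Fréchet-)analytic. First I would set up the Volterra-type iteration for $\theta(\cdot,z;B)$: writing $T_{z,B} y (x) := -\ii z \nu_1 (x-a_1) - z^2\int_{a_1}^x (x-s)\, y(s)\, B(s)\,\dd s$ viewed as an affine map on $C[a_1,a_2]$ (or on $W^{2,p}$), one checks by the usual Picard estimate that the $n$-th iterate has operator norm bounded by $(|z|^2 (a_2-a_1) \|B\|_p \cdot C)^n / n!$ up to constants depending only on $a_2-a_1$ and $p$ (via Hölder on the kernel $(x-s)$, which lies in $L^{p'}$). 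Hence the Neumann series $\theta = \sum_n T_{z,B}^n(1)$ converges absolutely and uniformly on bounded sets of $(z,B)$, which simultaneously gives the bounded-to-bounded property and, since each term $(z,B)\mapsto T_{z,B}^n(1)$ is a continuous polynomial in $(z,B)$ (finite sum of products of the bounded bilinear operation $B\mapsto (\cdot)$ against $z^2$), identifies $\theta$ as a locally uniform limit of polynomials, hence analytic from $\CC\times L^p_\CC(a_1,a_2)$ to $W^{2,p}[a_1,a_2]$ (using that $y\in W^{2,p}$ follows from the equation $y'' = -z^2 B y$ once $y\in C$ and $By\in L^p$). The same argument applied to the data $\vphi(a_1)=1,\vphi'(a_1)=0$ and $\psi(a_1)=0,\psi'(a_1)=1$ — i.e. dropping the $-\ii z\nu_1(x-a_1)$ term and adjusting the affine part — yields $(ii)$ for $\vphi$ and $\psi$; alternatively one invokes $\theta = \vphi - \ii z\nu_1\psi$ together with the decomposition obtained by varying $\nu_1$.

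For part $(i)$, once $(z,B)\mapsto\theta(\cdot,z;B)$ is analytic into $W^{2,1}[a_1,a_2]$, I would note that evaluation at the endpoint $\theta\mapsto\theta(a_2)$ is a bounded linear functional on $W^{2,1}$ (by the Sobolev embedding $W^{2,1}[a_1,a_2]\hookrightarrow C^1[a_1,a_2]$), that $(z,B)\mapsto \ii z \pa_x\theta(a_2,z;B)/(z\nu_2)$ — equivalently the integral term $\frac{\ii z}{\nu_2}\int \theta\, B$ in (\ref{e F}) — is analytic as a composition of analytic maps with the bounded bilinear pairing $L^\infty\times L^1\to\CC$, $(\theta,B)\mapsto\int\theta B$ (note $\theta\in C\subset L^\infty$), and that multiplication by the scalar $z$ and addition of the constant $\nu_1/\nu_2$ preserve analyticity. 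Summing these finitely many analytic pieces gives analyticity of $F$ on $\CC\times L^1(a_1,a_2)$. For the degenerate case $\nu_2=\infty$ one uses directly $F(z;B)=\theta(a_2,z;B)$, which is already covered.

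The main obstacle I expect is bookkeeping rather than anything deep: one must be careful that the Picard/Neumann estimates are uniform in $B$ over bounded sets of $L^p$ (not merely $L^\infty$), which forces the Hölder split of the kernel $(x-s)$ into $L^{p'}$ and is where the hypothesis $1\le p\le\infty$ enters — for $p=1$ the kernel must be taken in $L^\infty$, for $p=\infty$ in $L^1$, both fine on the finite interval $[a_1,a_2]$. A secondary technical point is the passage from a $C[a_1,a_2]$-valued analytic map to a $W^{2,p}[a_1,a_2]$-valued one: this is not automatic from the integral equation alone but follows because the second derivative is read off algebraically as $y''=-z^2 B y$, so one checks that $(z,B)\mapsto -z^2 B\theta$ is analytic into $L^p$ (again via the bounded bilinear $L^\infty\times L^p\to L^p$) and concludes that the $W^{2,p}$-valued map is analytic. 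No genuinely new idea beyond the standard ODE-with-parameters analyticity machinery is needed.
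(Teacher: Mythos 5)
Your proposal is correct and follows essentially the same route as the paper: the Neumann/Picard series for the Volterra integral equation is exactly the Maclaurin-type series of iterated integrals the paper uses for $\psi$ and $\vphi$ (with the same factorial bounds giving uniform convergence on bounded sets of $(z,B)$, hence analyticity and the bounded-to-bounded property), the upgrade to $W^{2,p}$ is likewise read off from $y''=-z^2By$ via the bounded bilinear map $L^\infty\times L^p\to L^p$, and part (i) follows from (ii) together with formula (\ref{e F}). The only cosmetic difference is that you treat $\theta$ directly while the paper treats $\psi,\vphi$ first and uses $\theta=\vphi-\ii z\nu_1\psi$, which you also mention as an alternative.
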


The proof is given in Appendix \ref{ss an and exist} (concerning analytic mappings on Banach spaces, see 
\cite{PT87}).

It is obvious that all modes $y$ corresponding to $\om \in \Si (B)$
are equal to $\theta (\cdot, \om; B )$ up to a multiplication by a
constant.  So \emph{the geometric multiplicity} of any
quasi-eigenvalue equals 1. In the following, \emph{the multiplicity}
of a quasi-eigenvalue means its \emph{algebraic multiplicity}.

 \begin{defn}\label{d mult}
\emph{The multiplicity} of a quasi-eigenvalue of $B$ is its multiplicity as
a zero of the entire function $F (\cdot; B) $. A quasi-eigenvalue is
called \emph{simple} if its multiplicity is $1$.
The set of non-simple  quasi-eigenvalues is denoted by $\Si_\mult (B)$ 
(non-simple quasi-eigenvalues are often called multiple, or degenerate).
 \end{defn}

This is essentially the classical M.V. Keldysh definition of multiplicity for eigenvalue problems 
with an eigen-parameter in boundary conditions 
(note that $F$ coincides with the characteristic determinant of (\ref{e ep})-(\ref{e bc a+}) up to a constant), see 
\cite{KN79,KN89,CZ95,PvdM01,Ka13,Ka14} and references therein.

Since $F (0; B) =1 + \frac{\nu_1}{\nu_2} >0$, each quasi-eigenvalue has a finite
multiplicity and the set $\Si (B)$ consists of isolated points, which can accumulate only to $\infty$. Note that $\Sigma (B)$ may be empty, see Proposition \ref{p const struct}.

\subsection{Non-simple resonances, the case of nonnegative $B$, and examples}
\label{ss mult}

\emph{There exist triples} $(\nu_1,\nu_2,B)$, consisting of numbers $\nu_{1,2}$
satisfying the assumption (\ref{e nu as}) and a nonnegative function $B$,
\emph{that generate non-simple quasi-eigenvalues}.
This follows from \cite{GP97}
 and \cite[Theorem 4.1]{PvdM01} (with $\nu_2 =+\infty$).
For a slightly different class of quasi-eigenvalue problems, 
existence of degenerate quasi-eigenvalues was proved in \cite[Theorem 3.1]{KN89} 
(English translation of this theorem can be found in \cite{Ka13_KN}, see also \cite{KN79}), and 
examples were given in \cite{MvdBY01,Ka13_KN}.

\begin{lem} \label{l prop Si}
Let $B(x) \ge 0$ a.e. and $\nu_{1,2}$ satisfy (\ref{e nu as}). Then:
\item[(i)] $\Si (B) \subset \CC_-$,
\item[(ii)] $\Sigma(B)$ is symmetric w.r.t. the imaginary axis $\ii \RR$, moreover,
the multiplicities of symmetric quasi-eigenvalues coincide.
\end{lem}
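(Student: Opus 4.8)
The plan is to work with the characteristic function $F(z) = F(z;B)$ from (\ref{e F}) and the solution $\theta(x,z;B)$ of the integral equation (\ref{e int ep th}), exploiting two structural facts: an energy (Green's-type) identity for solutions of $y'' = -z^2 B y$, and the conjugation symmetry $\overline{\theta(x,z;B)} = \theta(x,-\overline z;B)$ that follows from (\ref{e int ep th}) because $B$ is real-valued. For part (i), suppose $\om \in \Si(B)$ with mode $y = \theta(\cdot,\om;B)$, so $y'' = -\om^2 B y$ on $(a_1,a_2)$ together with the boundary conditions (\ref{e bc a-}), (\ref{e bc a+}). Multiply the equation by $\overline y$, integrate over $(a_1,a_2)$, and integrate by parts: this gives
\begin{equation*}
\int_{a_1}^{a_2} |y'|^2 \, \dd x - \bigl[\, \overline y\, y' \,\bigr]_{a_1}^{a_2} = \om^2 \int_{a_1}^{a_2} B\, |y|^2 \, \dd x .
\end{equation*}
Using (\ref{e bc a-}) and (\ref{e bc a+}) to rewrite the boundary terms as $\overline y(a_2) y'(a_2) = \ii \om \nu_2 |y(a_2)|^2$ and $\overline y(a_1) y'(a_1) = -\ii \om \nu_1 |y(a_1)|^2$, one obtains an identity of the form $\int |y'|^2 + \ii \om \bigl(\nu_2 |y(a_2)|^2 + \nu_1 |y(a_1)|^2\bigr) = \om^2 \int B |y|^2$. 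Taking imaginary parts and then, separately, the argument of both sides, I would conclude that $\im \om < 0$: the key point is that $\int |y'|^2$, $\int B|y|^2$, and the two boundary terms are all $\ge 0$, the boundary terms are not all zero unless $y$ vanishes at the endpoints (and, combined with $\nu_1 \le \nu_2$ and (\ref{e nu as}), with the equation this would force $y \equiv 0$), and $\int B|y|^2 > 0$ because otherwise $y'' \equiv 0$ and again the boundary conditions kill $y$. So both sides are nonzero, and matching $\arg$ on the two sides of the identity pins $\om$ into the open lower half-plane. I should also note $\om \neq 0$ is built into the definition of $\Si(B)$.

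For part (ii), the symmetry $\Si(B) = -\overline{\Si(B)}$ is immediate from $F(z;B) = \overline{F(-\overline z;B)}$, which in turn follows by conjugating the integral equation (\ref{e int ep th}) and formula (\ref{e F}) and using that $B$, $\nu_1$, $\nu_2$ are real. Concretely, if $\om \in \Si(B)$ with mode $\theta(\cdot,\om;B)$, then $\overline{\theta(\cdot,\om;B)} = \theta(\cdot,-\overline\om;B)$ solves the equation with $z = -\overline\om$ and satisfies the conjugated boundary conditions, which are exactly (\ref{e bc a-})--(\ref{e bc a+}) with $z = -\overline\om$; hence $-\overline\om \in \Si(B)$. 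Since $\om \in \CC_-$ gives $-\overline\om \in \CC_-$ as well, $\Si(B)$ is symmetric about $\ii\RR$. For the multiplicity statement, the relation $F(-\overline z;B) = \overline{F(z;B)}$, valid for all $z \in \CC$, means that the entire function $F(\cdot;B)$ has the same order of vanishing at $\om$ as at $-\overline\om$: writing $F(z) = c_k (z-\om)^k + O((z-\om)^{k+1})$ near $\om$ and substituting $z \mapsto -\overline z$ and conjugating transforms this into $F(w) = \overline{c_k}(w - (-\overline\om))^k + O(\cdots)$ near $w = -\overline\om$, so the multiplicities agree by Definition \ref{d mult}.

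The main obstacle I anticipate is not the algebra of the energy identity but the degenerate/boundary cases in part (i): ruling out $\om \in \partial\CC_- \cup \ii\RR_{-} \cup \{0\}$ cleanly. One must carefully use the hypotheses (\ref{e nu as}) — in particular $|\nu_1| + |1/\nu_2| \neq 0$ so at least one endpoint is genuinely dissipative, and the convention $y(a_2)=0$ when $\nu_2 = +\infty$ — to show that the "dissipated energy" boundary term cannot vanish for a nontrivial mode; and one must exclude the possibility $\int_{a_1}^{a_2} B|y|^2 \, \dd x = 0$, which would make the right-hand side of the identity vanish and break the argument-matching step. The case $B \equiv 0$ on part of the interval (allowed in these settings) is handled by noting that where $B=0$ the mode is affine, so it cannot vanish on a set of positive measure unless it is identically zero there, and unique continuation for the ODE then propagates this. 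Once these degenerate situations are dispatched, taking arguments in the energy identity forces $\im\om<0$ strictly, completing (i).
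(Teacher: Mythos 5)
Your overall route is the same as the paper's: for (i) an energy identity obtained by multiplying $y''=-\om^2 B y$ by $\overline y$ and integrating by parts, with strict negativity of $\im\om$ coming from the boundary dissipation terms, whose positivity is forced by (\ref{e nu as}) together with uniqueness for the initial value problem at an endpoint; for (ii) the conjugation identities $\overline{\theta(x,z;B)}=\theta(x,-\overline z;B)$ and $F(-\overline z;B)=\overline{F(z;B)}$, which are exactly the paper's (\ref{e iR real}), with equal orders of vanishing read off just as you do. Your part (ii) is correct as written.

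In part (i), however, there are two concrete defects. First, a sign slip that reverses the conclusion: with $\overline{y(a_1)}\,y'(a_1)=-\ii\om\nu_1|y(a_1)|^2$ and $\overline{y(a_2)}\,y'(a_2)=\ii\om\nu_2|y(a_2)|^2$ (both of which you state correctly), the identity is $\int|y'|^2-\ii\om\bigl(\nu_1|y(a_1)|^2+\nu_2|y(a_2)|^2\bigr)=\om^2\int B|y|^2$, not $+\ii\om(\cdots)$; with your sign the imaginary-part/argument reasoning would place $\om$ in $\CC_+$. Second, your auxiliary claim that $\int B|y|^2>0$ ``because otherwise $y''\equiv0$ and the boundary conditions kill $y$'' is false: the lemma allows $B\equiv 0$ a.e., and then a nontrivial affine mode does satisfy both boundary conditions --- Proposition \ref{p const struct} (iii) exhibits exactly this resonance. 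The claim is also unnecessary: writing $D:=\nu_1|y(a_1)|^2+\nu_2|y(a_2)|^2$ (interpreted as $\nu_1|y(a_1)|^2$ when $\nu_2=+\infty$, where $y(a_2)=0$), if $\int B|y|^2=0$ the corrected identity reads $\int|y'|^2=\ii\om D$ with $D>0$, so $\om\in-\ii\RR_+\subset\CC_-$ anyway. Finally, the decisive step is left vague (``matching $\arg$''); the honest version is a short case analysis: the imaginary part gives $\re\om\,\bigl(2\im\om\int B|y|^2+D\bigr)=0$, so either $\re\om\neq0$, and then $D>0$ forces $\im\om<0$, or $\re\om=0$, and then $\int|y'|^2=-(\im\om)^2\int B|y|^2-\im\om\,D>0$ forces $\im\om<0$ (recalling $\om\neq0$). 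The paper sidesteps all of this by taking the real part of $-\ii\om\En_y$ with $\En_y=\int|y'|^2+|\om|^2\int B|y|^2$, which yields in one step $\im\om=-\frac{|\om|^2}{\En_y}\bigl(\nu_1|y(a_1)|^2+\frac{|y'(a_2)|^2}{|\om|^2\nu_2}\bigr)<0$; note that writing the $a_2$-term as $|y'(a_2)|^2/(|\om|^2\nu_2)$ also covers $\nu_2=+\infty$ without the $\infty\cdot 0$ ambiguity in $\nu_2|y(a_2)|^2$.
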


\begin{proof}
\textbf{(i)}
Let $\om \in \Si (B) $ and $y$ be an associated eigenfunction.
The energy of the eigen-oscillation $u(x,t)= e^{-\ii \om t} y (x)$ at the time $t$ is
\[
\int_{a_1}^{a_2} |\pa_x u|^2 + B |\pa_t u|^2 \dd x = e^{- 2 \beta t} \En_y , \text{ where }
\En_y := \int_{a_1}^{a_2} |y'|^2  + B  \ |\om|^2 \ |y|^2 \dd x .
\]
Since $y$ is nontrivial, we see that $\En_y > 0$.
(Otherwise, $y' \equiv 0$ and, in turn, $y \equiv 0$).

Recalling that $\om \neq 0$ and taking the real part in the identity
\begin{gather*}
-\ii \om \En_y = -\ii\om \int_{a_1}^{a_2} |y'|^2 \dd x +\ii\overline{\om}\int_{a_1}^{a_2} (-\om^2) B \ y \ \overline{y} \ \dd x
= \\ =
- 2 \ii \re \om \int_{a_1}^{a_2} |y'|^2 \dd x - |\om|^2 \left( \nu_1 |y(a_1)|^2 +\frac{ |y'(a_2)|^2}{|\om|^2\nu_2} \right) ,
\end{gather*}
one gets
\[
\im \om  = - \frac{|\om|^2}{\En_y} \left(\nu_1 |y(a_1)|^2 +  \frac{|y'(a_2)|^2}{|\om|^2\nu_2} \right) < 0 .
\]
Note that inequality
$ \nu_1 |y(a_1)|^2 +\frac{ |y'(a_2)|^2}{|\om|^2\nu_2}  >0$ follows from (\ref{e nu as}) and $y \not \equiv 0$.

\textbf{(ii)} follows from identities
\begin{equation} \label{e iR real}
\text{$\theta (x, \ii \zeta;B)=\overline {\theta (x, \ii \overline{\zeta};B)}$
and $F (\ii \zeta ; B) = \overline{F (\ii \overline{\zeta})}$.}
\end{equation}
\end{proof}

When the medium of a resonator is homogeneous, quasi-eigenvalues can be calculated explicitly.

\begin{prop} \label{p const struct}
Let $B (x) \equiv b$, where $b \ge 0$ is a constant.
\item[(i)] If $b^{1/2}=\nu_1$ or $b^{1/2}=\nu_2$, then $\Si (b) = \varnothing$.
\item[(ii)] If $b >0$, $b^{1/2} \neq \nu_1$, and $b^{1/2} \neq \nu_2$, then
$\Si (b) = \{ \om_n (b) \}_{n=-\infty}^{+\infty} $ with
\begin{equation} \label{e om n hom}
 \om_n  (b) =
-  \frac{ \ii }{ 2 b^{1/2} (a_2 -a_1) }
\ln \left| \frac{1+K_1}{1-K_1} \right| + \frac{\pi}{b^{1/2} (a_2 -a_1) } \times \left\{
\begin{array}{ll}
n , & \text{if } b^{1/2} \not \in [\nu_1, \nu_2] \\
(n+1/2) , & \text{if } b^{1/2} \in (\nu_1 , \nu_2)
\end{array} \right. ,
\end{equation}
where $ K_1
:=  \frac{1+\nu_1/\nu_2 } {\frac{\nu_1}{b^{1/2}} + \frac{b^{1/2}}{\nu_2}}  >0$ and $n \in \ZZ$.

\item[(iii)] If $b=0 \neq \nu_1$, then $\Si (b) = \{ - \ii \frac{1/\nu_1 + 1/\nu_2} {a_2-a_1} \}$.

In the cases (ii)-(iii) all quasi-eigenvalues are simple.
\end{prop}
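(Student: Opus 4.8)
The plan is to compute the characteristic function $F(\cdot;b)$ of \eqref{e F th} in closed form and to read off its zeros, which by definition constitute $\Si(b)$. Write $\ell:=a_2-a_1$. For $b>0$ the equation $\pa_x^2 y=-z^2 b\,y$ is solved elementarily, and the initial data \eqref{e th} give
\[
\theta(x,z;b)=\cos\!\big(zb^{1/2}(x-a_1)\big)-\frac{\ii\nu_1}{b^{1/2}}\sin\!\big(zb^{1/2}(x-a_1)\big);
\]
substituting $x=a_2$ and $\pa_x\theta(a_2,z;b)$ into \eqref{e F th} (with the convention $1/\nu_2=0$ and $F=\theta(a_2,\cdot;b)$ when $\nu_2=\infty$) the factor $1/z$ cancels and one obtains the entire function
\[
F(z;b)=A\cos(zb^{1/2}\ell)-\ii\,C\sin(zb^{1/2}\ell),\qquad A:=1+\tfrac{\nu_1}{\nu_2},\quad C:=\tfrac{\nu_1}{b^{1/2}}+\tfrac{b^{1/2}}{\nu_2}.
\]
Here $A\ge 1>0$ always, and $C>0$ by \eqref{e nu as} together with $b>0$. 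The one-line identity $b^{1/2}\nu_2\,(C-A)=(\nu_1-b^{1/2})(\nu_2-b^{1/2})$ (read in the limiting sense when $\nu_2=\infty$) shows that $A=C$ exactly when $b^{1/2}\in\{\nu_1,\nu_2\}$, that $A<C$ when $b^{1/2}\notin[\nu_1,\nu_2]$, and that $A>C$ when $b^{1/2}\in(\nu_1,\nu_2)$.

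Expanding into exponentials, $F(z;b)=\tfrac12\big[(A-C)e^{\ii zb^{1/2}\ell}+(A+C)e^{-\ii zb^{1/2}\ell}\big]$. If $A=C$ (i.e.\ $b^{1/2}=\nu_1$ or $b^{1/2}=\nu_2$), then $F(z;b)=A\,e^{-\ii zb^{1/2}\ell}$ never vanishes, so $\Si(b)=\varnothing$; this proves part (i) for $b>0$, and the remaining subcase $b=0=\nu_1$ will follow from the affine formula below (where $F\equiv 1$). If $A\neq C$, then $F(z;b)=0$ is equivalent to $e^{2\ii zb^{1/2}\ell}=-\tfrac{A+C}{A-C}$. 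The right-hand side has modulus $\big|\tfrac{A+C}{A-C}\big|=\big|\tfrac{1+K_1}{1-K_1}\big|>1$ with $K_1=A/C>0$ as in the statement, and its argument is $0$ when $A<C$ and $\pi$ when $A>C$; solving the exponential equation — i.e.\ $2\ii zb^{1/2}\ell\in\ln\big|\tfrac{1+K_1}{1-K_1}\big|+2\pi\ii\,\ZZ$ in the first case and $\ln\big|\tfrac{1+K_1}{1-K_1}\big|+\ii\pi+2\pi\ii\,\ZZ$ in the second — yields precisely \eqref{e om n hom}, with the integer shift when $b^{1/2}\notin[\nu_1,\nu_2]$ and the half-integer shift when $b^{1/2}\in(\nu_1,\nu_2)$. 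For simplicity in part (ii): at a zero $z_0$ one has $(A-C)e^{\ii z_0 b^{1/2}\ell}=-(A+C)e^{-\ii z_0 b^{1/2}\ell}$, hence $\pa_z F(z_0;b)=-\ii b^{1/2}\ell\,(A+C)\,e^{-\ii z_0 b^{1/2}\ell}\neq 0$.

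Finally, part (iii) is the degenerate case $b=0$: then $\theta(x,z;0)=1-\ii z\nu_1(x-a_1)$ and $\pa_x\theta\equiv-\ii z\nu_1$, so by \eqref{e F th} $F(z;0)=1+\tfrac{\nu_1}{\nu_2}-\ii z\nu_1\ell$ is affine in $z$; for $\nu_1\neq 0$ its unique (hence simple) zero is $-\ii\,(1/\nu_1+1/\nu_2)/\ell$, while for $\nu_1=0$ it has no zero, consistently with part (i). The only genuinely delicate point in the whole argument is bookkeeping: keeping the degenerate branches $\nu_2=\infty$ and $b=0$ consistent with the generic trigonometric formula, and matching the $0$-versus-$\pi$ argument of $-\tfrac{A+C}{A-C}$ to the integer-versus-half-integer shift in \eqref{e om n hom}.
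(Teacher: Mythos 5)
Your proof is correct and takes essentially the same route as the paper: both compute the characteristic function explicitly for constant $B$ and reduce $F(z)=0$ to the elementary transcendental equation $e^{2\ii z b^{1/2}(a_2-a_1)}=\frac{1+K_1}{1-K_1}$ (the paper phrases this as $\tan\bigl([a_2-a_1]b^{1/2}z\bigr)=-\ii K_1$), with the same case analysis on the sign of $1-K_1$ and the same $\pa_z F$ check for simplicity. Your exponential bookkeeping and the identity $b^{1/2}\nu_2(C-A)=(\nu_1-b^{1/2})(\nu_2-b^{1/2})$ merely spell out details the paper leaves to the reader.
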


\begin{proof}
Consider the case $b>0$. The equation $F(z) = 0$ can be transformed into
$\tan ([a_2-a_1]b^{1/2}z) + \ii K_1 = 0$ and we know that the roots $\om$ are in $\CC_-$. Taking $\pa_z$ one can see that the roots are simple.
We have $K_1 = 1$ exactly  when $b^{1/2} \in \{ \nu_1,\nu_2\}$. In this case $\Si (b) = \varnothing$. In the case $K_1 \neq 1$, the real part of $\Arctan (-\ii K_1)$ is either
$\pi n$ or $\pi (n+1/2)$ depending on the sign of $1-K_1$. This leads to
(\ref{e om n hom}). Note that
\begin{equation} \label{e sign of 1-K1}
1>K_1 \ \Leftrightarrow \ b^{1/2} \not \in [\nu_1, \nu_2], \quad 1<K_1 \ \Leftrightarrow \ b^{1/2} \in (\nu_1, \nu_2),  \quad \text{ and } K_1 =1 \ \Leftrightarrow \ b^{1/2}  \in \{\nu_1, \nu_2\} .
\end{equation}

The case $b=0$ can be treated by straightforward computations.
\end{proof}
In the particular cases of one-side open resonators (i.e., when $\nu_1 =0$ or $\nu_2=+\infty$) this example is well known \cite{U75,CZ95}.

\section{Perturbations and weak continuity of resonances}
\label{s Pert th and weak cont}

Let $U$ be a Banach space. For a functional $G (z;u)$ that maps
$\CC \times U$ to $\CC$,
we denote by
\[
 \frac{\pa G (z;u_0)}{\pa u} (u_1) :=
 \lim_{\zeta \to 0} \frac{G (z;u_0 + \zeta u_1 ) - G (z;u_0)}{\zeta} ,
 \]
\emph{the directional derivative} of $G(z;\cdot)$ along the vector $u_1 \in U$ at the point $u_0 \in U$.
By $ \frac{\pa G (z;u_0)}{\pa u} $ the corresponding \emph{Fr\'echet derivative in $u$} is denoted.
If the Fr\'echet derivative $\frac{\pa G (z;u_0)}{\pa u} $ exists, it is a linear functional on $U$ and $ \frac{\pa G (z;u_0)}{\pa u} [S]$ defines the image of a set $S \subset U$
under this functional.

\begin{rem}[Global discontinuity of $B \mapsto \Si (B)$] 
\label{r glob discont}
Before the study of continuity properties of quasi-eigenvalues,  let us note that \emph{new quasi-eigenvalues can appear from $\infty$ with small variations of $B$}. To see this, it is enough to take the homogeneous structure $B_0 $ equal to $\nu_1^2$ or $\nu_2^2$ and to consider $\Si (B)$  using (\ref{e om n hom}) and taking $B\equiv B_0 + \ep$ with small $\ep \in \RR$. 

This case is very special because, when $B_0 = \nu_j^2$, the resonator is not separated from outer space by any reflecting barrier (see detailed explanations in \cite{CZ95}). We \emph{do not know if the above effect of global in $\om$ discontinuity of the set-valued map $B \mapsto \Si (B)$ can happen in a vicinity of $B_0 \not \equiv \nu_{1,2}^2$}.
\end{rem}

\subsection{Local weak continuity}
\label{ss weak cont}

Let us fix a countable family $\{ f_n \}_{n=1}^\infty$ of continuous functions that is dense in $C[a_1,a_2]$.
This family generates a metric
$\displaystyle \rho_\Mes (\dd M_1 , \dd M_2) := \sum_{n=1}^\infty \frac{|\int f_n (\dd M_1 - \dd M_2) |}{2^n (1+|\int f_n (\dd M_1 - \dd M_2) |)}$
on the space $\Mes$ of complex Borel measures  on $[a_1,a_2]$. The weak* topology on any closed ball in $\Mes$ coincides with
the topology generated by the metric $\rho_\Mes $. We will use the metric
\[
\rho (B_1, B_2) := \rho_\Mes (B_1 \dd x , B_2 \dd x ) \qquad \text{ on } L^1_\CC (a_1,a_2) ,
\]
where $B_j \, \dd x$ are absolutely continuous measures corresponding to the functions $B_j \in L^1_\CC$.

In this subsection, $\BB_R $ is the closed ball $\BB_R (0)$ in $L^1_\CC (a_1,a_2)$ with radius $R>0$ and centered in the origin.

\begin{lem} \label{l weak cont}  Let $z \in \CC$.
\item[(i)] The functional $F (z; \cdot) : \BB_R \to \CC$ is continuous w.r.t. the metric $\rho$.
\item[(ii)] The map $B \mapsto \theta (\cdot,z;B) $ is continuous from
the metric space $(\BB_R,\rho)$ to the normed space $C [a_1,a_2]$.
\end{lem}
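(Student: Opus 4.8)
The plan is to establish (ii) first, since (i) follows from it by a short computation using formula (\ref{e F}), and throughout to use the fact recorded above that on the norm-bounded set $\BB_R$ the metric $\rho$ induces precisely the weak${}^*$ topology of $\Mes=(C[a_1,a_2])^*$; indeed $\|B\,\dd x\|_{\mathrm{TV}}=\|B\|_1\le R$, so $B\,\dd x$ lies in the radius-$R$ ball of $\Mes$, on which $\rho_\Mes$ metrizes the weak${}^*$ topology. Since $(\BB_R,\rho)$ is metric it suffices to prove sequential continuity: I would fix $B_0\in\BB_R$ and a sequence $B_k\in\BB_R$ with $\rho(B_k,B_0)\to0$, equivalently $\int f\,(B_k-B_0)\,\dd x\to0$ for every $f\in C[a_1,a_2]$, and show $y_k:=\theta(\cdot,z;B_k)\to y_0:=\theta(\cdot,z;B_0)$ in $C[a_1,a_2]$. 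A first step is the uniform a priori bound $\|y_k\|_{C[a_1,a_2]}\le C_0(z,R)$, obtained from the integral equation (\ref{e int ep th}) by the $L^1$-coefficient form of Gronwall's inequality (alternatively, this is the bounded-to-bounded assertion of Lemma \ref{l an}(ii)).

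The key algebraic step is to decouple the difference $h_k:=y_k-y_0$ from the unknown solutions. Subtracting the two copies of (\ref{e int ep th}) and writing $y_kB_k-y_0B_0=(y_k-y_0)B_k+y_0(B_k-B_0)$ gives
\[
h_k(x)=g_k(x)-z^2\int_{a_1}^x(x-s)\,h_k(s)\,B_k(s)\,\dd s,\qquad g_k(x):=-z^2\int_{a_1}^x(x-s)\,y_0(s)\,(B_k(s)-B_0(s))\,\dd s,
\]
where the ``source'' $g_k$ involves only the \emph{fixed} function $y_0$. Gronwall's inequality again yields $\|h_k\|_{C[a_1,a_2]}\le\|g_k\|_{C[a_1,a_2]}\,e^{|z|^2(a_2-a_1)R}$, so (ii) reduces to proving $\|g_k\|_{C[a_1,a_2]}\to0$.

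For this last point, for each $x$ put $\phi_x(s):=(x-s)\,y_0(s)\,\chi_{[a_1,x]}(s)$; because the factor $(x-s)$ vanishes at $s=x$ we have $\phi_x\in C[a_1,a_2]$, and $g_k(x)=-z^2\int_{a_1}^{a_2}\phi_x\,(B_k-B_0)\,\dd x$. By weak${}^*$ convergence $g_k(x)\to0$ for each fixed $x$; to upgrade this to uniform convergence I would note the equi-Lipschitz estimate $\|\phi_x-\phi_{x'}\|_{C[a_1,a_2]}\le\|y_0\|_\infty\,|x-x'|$, so that $\{\phi_x:x\in[a_1,a_2]\}$ is compact in $C[a_1,a_2]$; covering it by finitely many $\delta$-balls around $\phi_{x_1},\dots,\phi_{x_N}$ and using $\|B_k-B_0\|_1\le 2R$ gives $\sup_x|g_k(x)|\le|z|^2\big(\max_{j}|\int\phi_{x_j}(B_k-B_0)\,\dd x|+2R\delta\big)$, whence $\|g_k\|_{C[a_1,a_2]}\to0$ on letting $k\to\infty$ and then $\delta\to0$. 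This proves (ii). For (i), formula (\ref{e F}) (with $1/\nu_2=0$ when $\nu_2=\infty$) gives $F(z;B_k)-F(z;B_0)=[\theta(a_2,z;B_k)-\theta(a_2,z;B_0)]-\tfrac{\ii z}{\nu_2}\big(\int(y_k-y_0)B_k\,\dd x+\int y_0(B_k-B_0)\,\dd x\big)$; the bracket and the first integral tend to $0$ by (ii) and the a priori bound, and the second integral tends to $0$ because $y_0$ is a fixed continuous test function.

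The main obstacle is the passage from pointwise-in-$x$ to uniform convergence of $g_k$: it is what forces the use of the equi-Lipschitz/compactness property of the test-function family $\{\phi_x\}$ together with the general principle that a total-variation-bounded, weak${}^*$-null sequence of measures converges uniformly on norm-compact subsets of $C[a_1,a_2]$. The Gronwall estimates and the reduction of (i) to (ii) are then routine.
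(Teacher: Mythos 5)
Your proof is correct, but it follows a genuinely different route from the paper's. The paper argues by compactness: since the maps in Lemma \ref{l an}(ii) are bounded-to-bounded, the solutions $\theta(\cdot,z;B_j)$ form a bounded set in $W^{2,1}$, hence a relatively compact set in $C[a_1,a_2]$; one extracts a uniformly convergent subsequence, identifies its limit as $\theta(\cdot,z;B_*)$ by passing to the limit in the integral equation (\ref{e int ep th}) (combining uniform convergence of the solutions with weak${}^*$ convergence of $B_j\,\dd x$), and then a standard sub-subsequence argument upgrades this to convergence of the full sequence. You instead give a direct, quantitative perturbation argument: the decomposition $y_kB_k-y_0B_0=(y_k-y_0)B_k+y_0(B_k-B_0)$ plus Gronwall with $L^1$ coefficient reduces everything to $\|g_k\|_{C}\to0$, which you obtain from the equi-Lipschitz (hence norm-compact) family $\{\phi_x\}$ and the fact that a TV-bounded weak${}^*$-null sequence of measures converges uniformly on norm-compact subsets of $C[a_1,a_2]$; your treatment of (i) via the explicit formula (\ref{e F}) matches the paper's reduction of (i) to (ii) in spirit. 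What each approach buys: the paper's proof is shorter once Lemma \ref{l an}(ii) is in hand and needs no equicontinuity discussion, while yours avoids the compact Sobolev embedding and subsequence extraction altogether and yields an explicit modulus-type estimate $\|\theta(\cdot,z;B_k)-\theta(\cdot,z;B_0)\|_{C}\le e^{|z|^2(a_2-a_1)R}\|g_k\|_{C}$, which could be reused elsewhere (e.g. for uniformity in $z$ on compact sets). The only blemishes are cosmetic: the integration variable in your formula for $g_k(x)$ should be $s$ rather than $x$, and the a priori bound on $\|y_k\|_{C}$ is not actually needed once the Gronwall decoupling is in place.
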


\begin{proof}
\textbf{(i)} follows from (ii). Let us prove \textbf{(ii)}.
Suppose $B_*, B_j \in \BB_R$ and $\rho (B_j ,B_*) \to 0$ as $j \to \infty$. In particular,
$\wlim B_j \dd x = B_* \dd x$. By Lemma \ref{l an} (ii),
\begin{equation} \label{e th j comp}
\text{the sequence $\theta_j (\cdot) := \theta (\cdot,z;B_j)$ is bounded in $W_\CC^{2,1}$, and so is relatively compact in $C [a_1,a_2]$.}
\end{equation}
Hence there exists a subsequence $\theta_{j_k}$ (strongly) convergent in $C [a_1,a_2]$ to a certain $\theta_*$.
Using $\wlim B_j \dd x = B_* \dd x$ and passing to the limit in (\ref{e int ep th}) for every $x \in [a_1,a_2]$,
one can show that $\theta_* (\cdot) = \theta (\cdot,z;B_*)$.

Assume that $\theta_j$ does not converge to $\theta_*$. Then (\ref{e th j comp}) imply that
there exists a subsequence $\theta_{n_k} $ convergent to $\theta_{**} \neq \theta_*$.
Passing to the limit in (\ref{e int ep th}) again, we see that $\theta_{**} (\cdot ) = \theta (\cdot,z;B_*) = \theta_* (\cdot) $,
a contradiction. Thus, $\theta (\cdot,z;B_j) \to \theta (\cdot,z;B_*)$ as $j \to \infty$.
\end{proof}

The \emph{total multiplicity of quasi-eigenvalues of $B$ in a set} $\D \subset \CC$ is the sum of multiplicities
of all $\om \in \Si (B) \cap \D$.

\begin{prop} \label{p weak cont mult}
Let $B_0 \in L^1_\CC (a_1,a_2)$ and $R \ge \| B_0 \|_1$. Let $\D$ be an open bounded subset of $\CC$ such that its boundary
$\Bd \D $ does not contain quasi-eigenvalues of $B_0$. Then
there exists a neighborhood $W \subset \BB_R$ of $B_0$ in the topology of the metric space $(\BB_R,\rho)$
such that, for any $B \in W$,
\begin{eqnarray*}
& \text{there are no quasi-eigenvalues of $B$ on $\Bd \D $,} & \\
& \text{the total multiplicity of quasi-eigenvalues of $B$ in $\D$ coincides with that of $B_0$.} &
\end{eqnarray*}
\end{prop}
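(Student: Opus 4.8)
The plan is a Hurwitz-type (argument-principle) argument: since the quasi-eigenvalues of $B$ are exactly the zeros of the entire function $F(\cdot;B)$, counted with multiplicity (Definition~\ref{d mult}), it suffices to show that $\rho$-small perturbations of $B_0$ produce uniformly small perturbations of $F(\cdot;B_0)$ on a suitable compact set surrounding $\Bd\D$, and then to invoke Rouch\'e's theorem. The one nontrivial ingredient is the continuity of $B\mapsto F(\cdot;B)$ from $(\BB_R,\rho)$ into $C$ of a compact neighbourhood of $\overline{\D}$; Lemma~\ref{l weak cont}(i) gives this only pointwise in $z$, so the point is to make it uniform in $z$.

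For the latter I would argue as follows. Fix a bounded open set $\D'$ with $\overline{\D}\subset\D'$. By Lemma~\ref{l an}(ii) (bounded-to-bounded) together with formula (\ref{e F}) for $F$ (and $F(z;B)=\theta(a_2,z;B)$ when $\nu_2=\infty$), the family $\{F(\cdot;B):B\in\BB_R\}$ is uniformly bounded on $\overline{\D'}$; since each $F(\cdot;B)$ is entire in $z$ (Lemma~\ref{l an}), this family is locally uniformly bounded on $\CC$, hence a normal family and in particular equicontinuous on the compact set $\overline{\D}$. Combining this equicontinuity with the pointwise $\rho$-continuity $B\mapsto F(z;B)$ of Lemma~\ref{l weak cont}(i), an Arzel\`a--Ascoli-type argument shows that $B\mapsto F(\cdot;B)$ is continuous from $(\BB_R,\rho)$ to $C(\overline{\D})$: if $B_n\to B_0$ in the metric $\rho$, the functions $F(\cdot;B_n)$ are equicontinuous on $\overline{\D}$ and converge pointwise to $F(\cdot;B_0)$, hence converge uniformly on $\overline{\D}$. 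Consequently, for every $\ep>0$ there is a relatively open neighbourhood $W$ of $B_0$ in $(\BB_R,\rho)$ with $\sup_{z\in\overline{\D}}|F(z;B)-F(z;B_0)|<\ep$ for all $B\in W$.

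It then remains to run Rouch\'e's theorem. Since $\Si(B_0)$ is discrete and $\Bd\D$ is compact and contains no quasi-eigenvalue of $B_0$, the set $\Si(B_0)\cap\overline{\D}=\{\om_1,\dots,\om_k\}$ is finite and lies in $\D$. Pick $r>0$ so small that the discs $\overline{\DD_r(\om_j)}$ are pairwise disjoint, contained in $\D$, and contain no quasi-eigenvalue of $B_0$ besides their centre; then $K:=\overline{\D}\setminus\bigcup_j\DD_r(\om_j)$ is compact, $F(\cdot;B_0)$ is zero-free on $K$, and $m:=\min_{z\in K}|F(z;B_0)|>0$. Apply the previous step with $\ep=m$ to obtain $W$. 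For $B\in W$ and $z\in K$ (in particular for $z\in\Bd\D$) we have $|F(z;B)|\ge|F(z;B_0)|-|F(z;B)-F(z;B_0)|>0$, so $B$ has no quasi-eigenvalue on $\Bd\D$; and on each circle $\Bd\DD_r(\om_j)\subset K$ we have $|F(z;B)-F(z;B_0)|<m\le\min_{\Bd\DD_r(\om_j)}|F(\cdot;B_0)|$, so by Rouch\'e's theorem $F(\cdot;B)$ has in $\DD_r(\om_j)$ the same number of zeros, counted with multiplicity, as $F(\cdot;B_0)$, namely the multiplicity of $\om_j$. Summing over $j$ and using that $B$ has no quasi-eigenvalue in $K$, the total multiplicity of quasi-eigenvalues of $B$ in $\D$ equals that of $B_0$. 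The main obstacle is the uniform-in-$z$ continuity established in the second paragraph; an alternative route would be to repeat the proof of Lemma~\ref{l weak cont}(ii) with $z$ allowed to vary, obtaining joint continuity of $(z,B)\mapsto\theta(\cdot,z;B)$ on $\CC\times(\BB_R,\rho)$ and then upgrading it, via compactness of $\overline{\D}$, to continuity of $B\mapsto\theta(\cdot,\cdot;B)$ into $C(\overline{\D}\times[a_1,a_2])$.
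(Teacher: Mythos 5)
Your argument is correct and follows essentially the same route as the paper: the paper establishes analyticity of $F(\cdot;B)$ in $z$ (Lemma \ref{l an}) and $\rho$-continuity in $B$ (Lemma \ref{l weak cont}) and then invokes Dieudonn\'e's Theorem 9.17.4, which is exactly the Rouch\'e-type zero-counting statement you prove by hand. Your explicit normal-family/equicontinuity step supplying uniformity in $z$ on $\overline{\D}$ is a sound way of filling in the detail that the paper delegates to that citation, so the proposal is a complete, slightly more self-contained version of the paper's proof.
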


\begin{proof}
By Lemmas \ref{l an} and \ref{l weak cont}, the functional $F(z;B)$ is analytic in $z$ and $B$ and is continuous
in the second variable on the metric space $(\BB_R,\rho)$.
This allows us to apply \cite[Theorem 9.17.4]{D69} (which is a corollary from Rouch\'e's theorem) to $F(z;B)$.
The equivalence $z \in \Si (B) \Leftrightarrow F (z;B) = 0$ completes the proof.
\end{proof}

\subsection{Resonances' perturbations and directional derivatives of $F$}
\label{ss Pert}

\begin{lem} \label{l der F}
At quasi-eigenvalues $\om \in \Sigma (B)$, the derivative of $ F $ w.r.t. $z$ is given by
\begin{equation} \label{e paz F}
\frac{\pa F (\om;B)}{\pa z} = \frac{2\om}{\partial_x\theta(a_2,\om,B)}   \int_{a_1}^{a_2} \theta^2 (s,\om;B) B (s) \dd s+\frac{i\nu_1}{\partial_x\theta(a_2,\om,B)}+\frac{\theta(a_2,\om,B)}{\om}
;
\end{equation}
and the directional derivatives $[\pa_{B} F (\om,B)] \ (V)$ w.r.t. $B$ in the direction $V \in L_\CC^\infty (a_1,a_2)$ by
\begin{equation} \label{e paBD F}
\frac{\pa F (\om,B)}{\pa B} (V) = \frac{\om^2}{\partial_x\theta(a_2,\om,B)} \int_{a_1}^{a_2}
\theta^2 (s,\om;B) \ V (s) \ \dd s .
\end{equation}
\end{lem}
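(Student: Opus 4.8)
The plan is to compute both derivatives directly from the explicit formula for $F$ and the integral equation characterizing $\theta$. Recall from (\ref{e F th}) that for $z \neq 0$ we have $F(z;B) = \theta(a_2,z;B) + \frac{\ii\, \pa_x\theta(a_2,z;B)}{z\nu_2}$, so everything reduces to differentiating $\theta(a_2,z;B)$ and $\pa_x\theta(a_2,z;B)$ with respect to $z$ and along a direction $V$ in $B$. The key analytic input is Lemma \ref{l an}(ii), which guarantees that $(z,B) \mapsto \theta(\cdot,z;B)$ is analytic from $\CC\times L^p_\CC(a_1,a_2)$ to $W^{2,p}[a_1,a_2]$, so these derivatives exist and the differentiation under the integral sign below is legitimate.

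The main computational device is the standard second-order ODE trick: if $\theta$ solves $\theta'' = -z^2 B \theta$ and $u$ is a derivative of $\theta$ (either $\pa_z\theta$ or $[\pa_B\theta](V)$), then $u$ solves an inhomogeneous equation $u'' = -z^2 B u + g$ with zero initial data at $a_1$, where $g = -2zB\theta$ in the $z$-case and $g = -z^2 V\theta$ in the $B$-case. I would then use the variation-of-parameters / Green's-function representation for $u(a_2)$ and $u'(a_2)$ built from the fundamental system $\vphi,\psi$ defined in (\ref{e phi psi}). Concretely, by the Lagrange identity, for any such $u$ one has $u(a_2)\pa_x\theta(a_2) - u'(a_2)\theta(a_2) = -\int_{a_1}^{a_2} g(s)\theta(s)\,\dd s$ (the Wronskian-type combination), since the boundary terms at $a_1$ vanish by the initial conditions. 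Evaluating this with $g=-z^2 V\theta$ gives $u(a_2)\pa_x\theta(a_2) - u'(a_2)\theta(a_2) = z^2\int_{a_1}^{a_2}\theta^2(s)V(s)\,\dd s$; combined with the analogous relation coming from the other independent combination, one solves for $u(a_2) = [\pa_B\theta(a_2)](V)$ and substitutes into $\pa_B F = u(a_2) + \frac{\ii u'(a_2)}{z\nu_2}$. At a quasi-eigenvalue $\om\in\Si(B)$ one has $F(\om;B)=0$, i.e. $\theta(a_2,\om)\cdot(z\nu_2) = -\ii\pa_x\theta(a_2,\om)$ (equivalently $y'(a_2)/(\ii\om\nu_2)=y(a_2)$), and this relation is exactly what collapses the two boundary terms into the single clean expression $\frac{\om^2}{\pa_x\theta(a_2,\om,B)}\int_{a_1}^{a_2}\theta^2(s,\om;B)V(s)\,\dd s$ appearing in (\ref{e paBD F}). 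The $z$-derivative (\ref{e paz F}) is obtained the same way with $g=-2zB\theta$, which produces the integral term $\frac{2\om}{\pa_x\theta(a_2,\om,B)}\int\theta^2 B$; the two extra terms $\frac{\ii\nu_1}{\pa_x\theta(a_2,\om,B)}$ and $\frac{\theta(a_2,\om,B)}{\om}$ come from the explicit $z$-dependence of the initial condition $\pa_x\theta(a_1)=-\ii z\nu_1$ and from the explicit factor $\frac{1}{z\nu_2}$ in (\ref{e F th}), respectively, after using the quasi-eigenvalue relation $F(\om;B)=0$ once more to rewrite $\frac{1}{\om\nu_2}\cdot(\text{stuff})$ in terms of $\theta(a_2,\om)/\om$.

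The step I expect to require the most care is bookkeeping the boundary contributions at $a_2$ and the explicit $z$-dependence in $\theta$'s initial data: one must be scrupulous that $\pa_z\theta$ does not satisfy homogeneous initial conditions at $a_1$ (it inherits $\pa_z[\pa_x\theta(a_1)] = -\ii\nu_1 \neq 0$), whereas $[\pa_B\theta](V)$ does, and this asymmetry is the source of the two ``extra'' terms in (\ref{e paz F}) that are absent from (\ref{e paBD F}). It is cleanest to carry out the variation-of-parameters representation in terms of $\vphi$ and $\psi$, track the coefficients, differentiate, and only at the very end impose $F(\om;B)=0$ to simplify; imposing it too early risks losing terms. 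The convergence/interchange-of-limits issues are not an obstacle here since analyticity on the relevant Banach spaces is already granted by Lemma \ref{l an}, and $\theta(\cdot,\om;B)\in W^{2,1}\subset C[a_1,a_2]$ makes all the integrals absolutely convergent with $B,V\in L^\infty$ (indeed $V\in L^\infty$ and $B\in L^1$ suffice).
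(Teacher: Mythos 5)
Your proposal is correct and follows essentially the same route as the paper: there, too, one differentiates the integral equation for $\theta$ to see that $[\pa_B\theta](V)$ and $\pa_z\theta$ solve the inhomogeneous problem $y''+z^2By=f$ with $f=-z^2V\theta$ (data $y(a_1)=y'(a_1)=0$) resp. $f=-2zB\theta$ (data $y(a_1)=0$, $y'(a_1)=-\ii\nu_1$), then evaluates $y(a_2)+\ii y'(a_2)/(\om\nu_2)$ by variation of parameters and collapses the result using $F(\om;B)=0$ and the Wronskian of $(\theta,\psi)$. Your Lagrange-identity pairing with $\theta$ is the same computation in compact form (it yields the needed combination $u(a_2)\pa_x\theta(a_2)-u'(a_2)\theta(a_2)$ directly, making the Wronskian/$\psi$ step implicit), and your attribution of the two extra terms in (\ref{e paz F}) to the $-\ii\nu_1$ initial datum and to the explicit $1/(z\nu_2)$ factor, simplified via $F(\om;B)=0$, is exactly what the paper's calculation produces.
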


The calculation of derivatives resembles that of \cite{Ka13} and is given in Appendix \ref{ss proof of der F}.

\begin{prop} \label{p per k mult}
Let $\om $ be a quasi-eigenvalue of $B \in L^1_\CC$ of multiplicity $m \in \NN$.
Then:
\item[(i)] For every direction $V \in L^1_\CC$ , there exist $\ep>0$, $\de>0$,
and  functions $\Om_j $, $j=1,\dots,m$, continuous on $[0,\de)$
such that for $ \zeta \in [0,\de) $, all the quasi-eigenvalues of $B + \zeta V $
lying  in $\DD_{\ep} (\om)$ are given by $\{ \Om_j (\zeta) \}_1^m$
taking multiplicities into account.

\item[(ii)] For each of these functions the following asymptotic formula is valid
\begin{equation} \label{e Om asy}
\Om_j (\zeta) = \om + [K (\om,B;V) \zeta]^{1/m} + o (\zeta^{1/m})
\qquad   \text{ as $\zeta \to 0$,  \ \ with }
K (\om,B;V) := -\frac{m! \, \frac{\pa F (\om,B)}{\pa B} (V)}{\pa_z^m F (\om ; B)}  .
\end{equation}

\item[(iii)]
In the case $K (\om,B;V) \neq 0$, each branch of $ [\cdot]^{1/m}$ corresponds
to exactly one of functions $\Om_j$. So for small enough $\zeta>0$,
all $m$ values of functions  $\Om_j (\zeta)$ are distinct and simple quasi-eigenvalues of $B + \zeta V$.
\end{prop}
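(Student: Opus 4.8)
The plan is to reduce the perturbation analysis of the quasi-eigenvalue $\om$ to the analysis of zeros of the analytic-in-both-variables functional $F(z; B + \zeta V)$ near $(\om, 0)$, and then invoke the classical Weierstrass preparation / Puiseux theory for zeros of analytic functions depending analytically on a parameter. First I would fix the direction $V \in L^1_\CC(a_1,a_2)$ and set $R > \|B\|_1 + \|V\|_1$, so that $B + \zeta V \in \BB_R$ for all $\zeta$ in a neighborhood of $0$. By Lemma \ref{l an}(i), the map $(z,\zeta) \mapsto F(z; B+\zeta V)$ is analytic on a neighborhood of $(\om,0)$ in $\CC^2$ (it is the composition of the analytic functional $F$ on $\CC \times L^1$ with the affine analytic map $\zeta \mapsto B+\zeta V$). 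By hypothesis $\om$ is a zero of $F(\cdot; B)$ of order exactly $m$, and since $F(\cdot;B) \not\equiv 0$ (recall $F(0;B) = 1 + \nu_1/\nu_2 > 0$), I can choose $\ep > 0$ so small that $\om$ is the only zero of $F(\cdot;B)$ in $\overline{\DD_\ep(\om)}$ and $F(z;B) \neq 0$ on the circle $|z-\om| = \ep$. By continuity of $F$ in $\zeta$ (uniformly on that circle) there is $\de > 0$ such that $F(z; B+\zeta V) \neq 0$ for $|z-\om| = \ep$ and $\zeta \in [0,\de)$, and then Rouch\'e's theorem gives that $F(\cdot; B+\zeta V)$ has exactly $m$ zeros (with multiplicity) inside $\DD_\ep(\om)$ for each such $\zeta$. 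This proves the counting statement of part (i).

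For the continuity of the branches and the asymptotic formula, I would apply the Weierstrass preparation theorem to $F(z; B+\zeta V)$ at $(\om,0)$: there is a Weierstrass polynomial $P(z,\zeta) = (z-\om)^m + c_{m-1}(\zeta)(z-\om)^{m-1} + \dots + c_0(\zeta)$ with $c_j$ analytic, $c_j(0) = 0$, and a nonvanishing analytic unit $u(z,\zeta)$ with $F = P \cdot u$ near $(\om,0)$; the zeros of $F$ near $(\om,0)$ are exactly the zeros of $P$. The theory of algebraic functions (Puiseux series) then provides $m$ continuous branches $\Om_j(\zeta)$ on $[0,\de)$ (shrinking $\de$ if necessary) whose union, with multiplicity, is the full zero set of $P(\cdot,\zeta)$; this is the content of part (i), and the continuity in part (i). For part (ii), I would extract the leading term of the Puiseux expansion. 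Writing $z = \om + w$, Taylor-expanding $F$ gives
\begin{equation*}
F(\om + w; B + \zeta V) = \frac{\pa_z^m F(\om;B)}{m!}\, w^m + \frac{\pa F(\om;B)}{\pa B}(V)\, \zeta + (\text{higher order terms}),
\end{equation*}
where I used that all $z$-derivatives of $F(\cdot;B)$ of order $< m$ vanish at $\om$. Setting this leading balance to zero yields $w^m \approx -\, m!\, \frac{\pa F(\om;B)}{\pa B}(V)\, \zeta / \pa_z^m F(\om;B) = K(\om,B;V)\zeta$, i.e. $w = [K(\om,B;V)\zeta]^{1/m} + o(\zeta^{1/m})$, which is (\ref{e Om asy}); here $\pa_z^m F(\om;B) \neq 0$ precisely because the multiplicity is exactly $m$. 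A careful bookkeeping of which terms are genuinely lower order (the $\zeta$-linear term dominates any $w^k \zeta$ with $k \ge 1$, and any $\zeta^2$ term, once $w = O(\zeta^{1/m})$) makes the $o(\zeta^{1/m})$ estimate rigorous. Part (iii) then follows: when $K(\om,B;V) \neq 0$, the $m$ distinct $m$-th roots of $K(\om,B;V)\zeta$ give $m$ distinct leading terms, so for small $\zeta > 0$ the $\Om_j(\zeta)$ are pairwise distinct; being $m$ distinct zeros of a function with exactly $m$ zeros (counted with multiplicity) in $\DD_\ep(\om)$, each is simple.

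The main obstacle is the passage from the formal leading-order balance to the rigorous Puiseux expansion with the stated error term — i.e. justifying that $F$ near $(\om,0)$ genuinely behaves like its Weierstrass polynomial and that the Newton-polygon of $P$ has the single relevant edge of slope $1/m$ when $K \neq 0$ (and still yields continuous branches with $o(\zeta^{1/m})$ error when $K = 0$, in which case one only claims $\Om_j(\zeta) \to \om$ with a possibly slower rate absorbed into $o(\zeta^{1/m})$ after reinterpreting the formula as $\Om_j(\zeta) = \om + o(\zeta^{1/m})$ when the bracket vanishes). This is standard but needs the analyticity of $F$ in both variables from Lemma \ref{l an}, which is the one external input; the rest is the classical perturbation theory of Keldysh-type characteristic determinants, essentially as in \cite{Ka13,Ka14}. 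The restriction $V \in L^\infty$ in Lemma \ref{l der F} for the closed-form directional derivative is not needed here since $\frac{\pa F}{\pa B}(V)$ is defined for $V \in L^1$ by Fr\'echet differentiability of $F$ on $\CC \times L^1$ (Lemma \ref{l an}(i)), so the formula (\ref{e paBD F}) extends to $V \in L^1$ by density and continuity.
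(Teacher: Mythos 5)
Your proposal is correct and follows essentially the same route as the paper: analyticity of $F(z;B+\zeta V)$ in both variables (Lemma \ref{l an}), the Weierstrass preparation theorem, Puiseux branches for the zeros of the Weierstrass polynomial, identification of the leading coefficient with $K(\om,B;V)$ via $\pa_\zeta F(\om;B)=\frac{\pa F(\om,B)}{\pa B}(V)$ and $\pa_z^m F(\om;B)\neq 0$, and the Newton-diagram argument to cover the degenerate case $K=0$, which is exactly how the paper proves that (\ref{e Om asy}) holds unconditionally. Only a wording slip: when $K=0$ the branches converge \emph{faster} than $\zeta^{1/m}$ (the error is genuinely $o(\zeta^{1/m})$, since all $h_j$ vanish at $0$ and $h_m'(0)=0$ forces every Newton-polygon slope to exceed $1/m$), not ``with a possibly slower rate''.
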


\begin{proof}
For a one-side open resonator, an analogue of the particular case considered in (iii) 
was obtained in \cite[Proposition 3.5]{Ka13}.
Statement (i) is well known for resonances of the Schr\"odinger
equation and can be obtained
using the Weierstrass preparation theorem and the Puiseux series theory.
We start from  these arguments to prove that (\ref{e Om asy}) holds always (not only  when $K (\om,B,V) \neq 0$ as in \cite{Ka13}).

Consider the entire function $Q (z,\zeta) := F ( z ; B + \zeta
V )$ of two complex variables $z$ and $\zeta$. Then $\om $ is
an $m$-fold zero of the function $Q (\cdot,0)$. By the
Weierstrass preparation theorem, in a certain polydisc $\D_{\ep}
(\om ) \times \D_{\delta_1} (0)$,
\[
Q (z,\zeta) = 
[(z-\om )^m + h_1 (\zeta) (z-\om)^{m-1} + \dots + h_m (\zeta) ] R (z,\zeta) ,
\]
where the coefficients $h_j$ (the function $R (z,\zeta)$) are
analytic in $\D_{\delta_1} (\om)$ (resp., in $\D_{\ep} (\om)
\times \D_{\delta_1} (0)$),
\[
\text{$h_j (0) = 0$, \ \ \ and  \ \ \ $R (z,\zeta) \neq 0
$ \ in \ $\D_{\ep} (\om) \times \D_{\delta_1} (0)$.}
\]
Differentiating $Q$ by $\zeta$, one gets
$
\frac{\pa F (\om,B)}{\pa B} (V) = \pa_\zeta Q  (\om,0)  = h'_m (0) R (\om,0)  .
$
On the other side,
\[
\pa_z^m F (\om,B) = \pa_z^m Q (\om,0) = m! R (\om,0) \neq 0.
\]
Hence,
\begin{equation} \label{e hm in 0}
h'_m (0) =
\frac{\pa_\zeta Q  (\om,0) }{ R (\om,0)} =
 \frac{m! \frac{ \pa F ( \om ; B )}{\pa B} (V)} { \pa_z^m F (\om,B)} = - K (\om,B;V).
\end{equation}

Denote by $P (z,\zeta) :=(z-\om )^m + h_1 (\zeta) (z-\om)^{m-1} + \dots + h_m (\zeta) $ the  Weierstrass polynomial.  Since $R (z,\zeta) \neq 0$, the zeros of $Q$ and $P$ in  $\D_{\ep_1} (\om) \times \D_{\delta_1} (0)$ coincide.
It is well known that for small enough $\delta \in (0,\delta_1]$
and $|\zeta | < \delta $, there are exactly $m$ $z$-roots (counting
multiplicity) of $P (z,\zeta) = 0$ and these roots are given by a one or more convergent
(and possibly multivalued) Puiseux series. We denote these $m$ branches of (one or several)  Puiseux series by $\Om_j (\zeta)$.

The method of the Newton diagram (see e.g. \cite{VT74} and \cite[Theorem XII.2]{RS78IV})  implies that, if the roots $\Om_j (\zeta)$ are given by more than one
 Puiseux series, then $h'_m (0) = K (\om,B,V) =0$ and
all the roots have the asymptotics
$ \Om_j (\zeta) = \om + o (\zeta^{1/m})$.

In the case when the roots $\Om_j (\zeta)$ are given by one  Puiseux series,
this series can be written in powers of $\zeta^{1/m}$ and formula
(\ref{e Om asy}) follows from  equalities
(\ref{e hm in 0}) and 
$ (\Om_j-\om )^m + h_1 (\zeta) (\Om_j -\om)^{m-1} + \dots + h_m (\zeta) = 0 $, 
see details in the proof of \cite[Proposition 3.5]{Ka13}.
\end{proof}

\section{Variational characterization of the nonlinear spectrum}
\label{s Snl pert}

Recall that $\ChiCpl$ is the indicator function of $\CC_+$.
A function $y \in W^{2,1}_\CC [a_1,a_2]$ is called a nontrivial solution to
\begin{equation} \label{e nonlin}
y'' (x)  = - \om^2 y(x) \left[ b_1 (x) + [b_2(x) - b_1(x)] \ChiCpl (y^2 (x) ) \right]
\end{equation}
if $y \not \equiv 0$ and (\ref{e nonlin}) is satisfied for a.a. $x \in [a_1,a_2]$.
If problem (\ref{e nonlin}), (\ref{e bc a-}), (\ref{e bc a+}) admits a nontrivial
solution $y$, we say that $y$ is an eigenfunction and $\om$ is an eigenvalue of 
(\ref{e nonlin}), (\ref{e bc a-}), (\ref{e bc a+}). In short, we will say that $\om$ \emph{is a 
nonlinear eigenvalue. The set of nonlinear eigenvalues is denoted by $\Si^\nl $}.

Note that
\begin{equation} \label{e Snl sym}
\text{$\Si^\nl$ is symmetric w.r.t. $\ii \RR$.}
\end{equation}
Indeed, if $\om=\ii \zeta \in \Si^\nl$ and $y$ is an associated eigenfunction, then for
$B = [ b_1 + (b_2 - b_1) \ChiCpl (y^2 ) ] $ one has $\om   \in \Si (B)$.
Since  $\om_1 = \ii \overline{\zeta} \in \Si (B)$ and $\overline{y}$ 
is an  eigenfunction associated with $\om_1$ (see Lemma \ref{l prop Si} (ii) an its proof), we conclude that  $\om_1 $ is a nonlinear eigenvalue
with $y_1 := \ii \overline{y}$ as an eigenfunction. 

Let us notice also that $B = [ b_1 + (b_2 - b_1) \ChiCpl (y^2 ) ]$ belongs to $\A$ and, 
moreover, belongs to the set $\Ext \A$ of extreme points of $\A$. So,
\begin{equation} \label{e Snl subset}
\Si^\nl \subset \Si [\Ext \A] \subset \Si [\A].
\end{equation}

Let $K$ be the 
functional defined in Proposition \ref{p per k mult} (ii) and let 
$\cone K (\om, B; \A - B)$ be the nonnegative convex cone  generated by the set 
$K (\om, B; \A - B) = \{ K (\om, B; \wt B-B) \ : \ \wt B \in \A\} $.

\begin{thm} \label{t char Si nl}
Let $\om \not \in \ii \RR$. Then the following statements are equivalent:
\item[(i)] $\om $ is nonlinear eigenvalue,
\item[(ii)] there exists $B_0 \in \A$ such that $\om \in \Si (B_0)$ and 
$\cone K (\om, B_0; \A - B_0) \neq \CC$.

If (i)-(ii) hold true, then there exists an eigenfunction $y$ of (\ref{e nonlin}), (\ref{e bc a-}), (\ref{e bc a+})
such that
\begin{equation} \label{e B0=b1b2}
B_0 (x) = \left[ b_1 (x) + [b_2(x) - b_1(x)] \ChiCpl (y^2 (x) ) \right] \text{ a.e. on } (a_1,a_2).
\end{equation}
\end{thm}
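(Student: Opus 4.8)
The plan is to prove the two implications (i)$\Rightarrow$(ii) and (ii)$\Rightarrow$(i) using the first-order perturbation theory of Section \ref{ss Pert}, interpreting the statement $\cone K(\om,B_0;\A-B_0)\neq\CC$ geometrically: since $\om\notin\ii\RR$, a small perturbation $B_0\leadsto B_0+\zeta(\wt B-B_0)$ moves the quasi-eigenvalue (when it is simple) in the direction $K(\om,B_0;\wt B-B_0)$ by Proposition \ref{p per k mult}(ii) with $m=1$; the cone condition says these first-order displacement directions do not span all of $\CC$, i.e., $\om$ sits on the boundary of $\Si[\A]$ in a strong infinitesimal sense. The bridge between "direction of motion of $\om$" and "sign of $\im y^2$" is the explicit formula (\ref{e paBD F}): writing $\wt B - B_0 = V$, one has $\frac{\pa F}{\pa B}(V) = \frac{\om^2}{\pa_x\theta(a_2)}\int_{a_1}^{a_2}\theta^2(s)V(s)\,\dd s$, so (up to the fixed nonzero factors $\om^2/\pa_x\theta(a_2)$ and $-1/\pa_z F$ absorbed into $K$) the attainable directions $K(\om,B_0;\A-B_0)$ are exactly $\{c\int\theta^2 V : V\in\A-B_0\}$ for a fixed nonzero $c\in\CC$, and $\theta(\cdot,\om;B_0)$ is the mode $y$.

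For (i)$\Rightarrow$(ii): assume $\om$ is a nonlinear eigenvalue with eigenfunction $y$, and set $B_0$ by (\ref{e B0=b1b2}); then $B_0\in\A$ and $\om\in\Si(B_0)$ with mode $\theta(\cdot,\om;B_0)=y$ (up to scalar). For any $\wt B\in\A$ the admissible direction $V=\wt B-B_0$ satisfies $V(x)\ge 0$ exactly where $B_0(x)=b_1(x)$, i.e. where $\im y^2(x)\le 0$, and $V(x)\le 0$ where $\im y^2(x)>0$; hence $\im\big(y^2(x)V(x)\big)\le 0$ a.e., so $\im\int_{a_1}^{a_2}\theta^2 V\,\dd s\le 0$. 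This says $\int\theta^2 V$ lies in the closed half-plane $\overline{\CC_-}$; after multiplying by the fixed nonzero constant $c$ (a rotation/scaling), $K(\om,B_0;\A-B_0)$ lies in a closed half-plane through $0$, whence its generated cone is contained in that half-plane and cannot be all of $\CC$. This gives (ii), and also the last assertion of the theorem.

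For (ii)$\Rightarrow$(i): suppose $B_0\in\A$, $\om\in\Si(B_0)$, and $\cone K(\om,B_0;\A-B_0)\neq\CC$. Transporting through the fixed nonzero factor $c$, the set $\{\int_{a_1}^{a_2}\theta^2(s,\om;B_0)V(s)\,\dd s : V\in\A-B_0\}$ generates a convex cone that is not all of $\CC$, so it is contained in some closed half-plane $H=\{z:\re(e^{\ii\beta}z)\le 0\}$. The key computational step is then to show this half-plane condition forces $B_0$ to be of bang-bang form (\ref{e B0=b1b2}) relative to $y=\theta(\cdot,\om;B_0)$: on the set where $\im\big(e^{\ii\beta}\,c\,y^2(x)\big)$... more carefully, writing $w(x):=\re\big(e^{\ii\beta}c\,y^2(x)\big)$, the functional $V\mapsto\int w(x)V(x)\,\dd x$ must be $\le 0$ for every $V=\wt B-B_0$ with $\wt B\in\A$; testing with $V$ supported where $w>0$ forces $B_0=b_2$ there, and with $V$ supported where $w<0$ forces $B_0=b_1$ there. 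Matching $\{w>0\}$ and $\{w<0\}$ with $\{\im y^2>0\}$ and $\{\im y^2<0\}$ (this is where the precise value of the rotation angle $\arg(e^{\ii\beta}c)$ relative to $\pm\pi/2$ must be pinned down, using that $\om\notin\ii\RR$ and the structure of the mode) yields $B_0=b_1+(b_2-b_1)\ChiCpl(y^2)$ a.e. on the set $E$ where $b_1<b_2$; off $E$ the value of $B_0$ is forced anyway. Then $y$ solves (\ref{e nonlin}) with this $B_0$ and the radiation conditions, so $\om\in\Si^\nl$.

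\textbf{Main obstacle.} The delicate point is the second implication: extracting from the single scalar inequality "$\cone K\neq\CC$" (equivalently, one supporting half-plane for the integral functionals) the a.e.\ pointwise bang-bang identification of $B_0$. One must (a) convert the cone-in-a-half-plane statement into the pointwise sign condition on $w(x)=\re(e^{\ii\beta}c\,y^2(x))$ by choosing variations $V$ concentrated on small sets (a standard but careful Lebesgue-point / localization argument, using that $\A-B_0$ contains $\pm(b_2-b_1)\chi_S$-type variations on any $S\subset E$), and (b) verify that the rotation $e^{\ii\beta}c$ aligns $\{w>0\}$ with $\{\im y^2>0\}$ up to a null set — i.e. that the supporting direction is exactly orthogonal to $\ii\RR$ after the twist by $c$. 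Handling the possible vanishing of $y^2$ on sets of positive measure, and the case $\pa_x\theta(a_2,\om;B_0)=0$ or $\pa_z F(\om;B_0)=0$ (nonsimple $\om$, where Proposition \ref{p per k mult} gives $m>1$ and $K$ involves $\pa_z^m F$), will require extra care; presumably one reduces to $m=1$ by noting that multiplicity together with the half-plane condition is incompatible, or absorbs it into the definition of $K$.
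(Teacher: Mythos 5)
Your direction (i)$\Rightarrow$(ii) is essentially the paper's argument and is correct (modulo bookkeeping: $y$ and $\theta$ differ by a complex scalar, so what you get is that $\int\theta^2V$ lies in a \emph{rotated} closed half-plane $e^{\ii\xi_*}\overline{\CC_-}$, not literally $\overline{\CC_-}$ — the conclusion $\cone K\neq\CC$ is unaffected). Also two of your flagged worries are non-issues: $\pa_x\theta(a_2,\om;B_0)\neq0$ automatically at a quasi-eigenvalue (otherwise the boundary condition forces $\theta\equiv0$), and $K$ is defined in Proposition \ref{p per k mult} with $\pa_z^m F$ in the denominator, so for any multiplicity $m$ the value $K(\om,B_0;V)$ is a fixed nonzero multiple of $\int\theta^2 V\,\dd s$ and the two cone conditions are equivalent; this is exactly how the paper absorbs multiplicity.

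The genuine gap is in (ii)$\Rightarrow$(i), and it is not quite where you locate it. The alignment of the supporting half-plane with $\CC_\pm$ costs nothing: if $\bigl\{\int\theta^2V \,:\, V\in\A-B_0\bigr\}\subset\{z:\re(e^{\ii\beta}z)\le0\}$, take the eigenfunction $y:=\mu\,\theta$ with $\mu^2=\ii e^{\ii\beta}$, so that $\im y^2(x)=w(x):=\re\bigl(e^{\ii\beta}\theta^2(x)\bigr)$ pointwise; no "pinning down" of the angle is needed, since the nonlinear problem allows any scalar multiple of the mode. Your localization step (testing with $V=(b_2-B_0)\chi_S$ for $S\subset\{w>0\}$ and $V=-(B_0-b_1)\chi_S$ for $S\subset\{w<0\}$) correctly gives $B_0=b_2$ a.e.\ on $\{w>0\}$ and $B_0=b_1$ a.e.\ on $\{w<0\}$, and is in fact a cleaner route than the paper's Lemma \ref{l th Epm}. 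What is missing is the set $\{w=0\}$, i.e.\ the $x$ at which $\theta^2(x)$ lies on the boundary line of the supporting half-plane. There $\ChiCpl(y^2)=0$, so (\ref{e B0=b1b2}) forces $B_0=b_1$, but the inequality $\int wV\,\dd x\le0$ gives no information whatsoever on this set (any $V$ supported there contributes zero), and a priori it may have positive measure with $b_1<B_0$ on it. This is exactly where the paper invokes the rotational structure of the mode (Lemma \ref{l turn int}): since $\re\om\neq0$, the phase $\arg_*\theta$ is strictly decreasing before and strictly increasing after a turning interval $[x_*,x^*]$, with bounded derivative, so outside $[x_*,x^*]$ the set where $\arg\theta^2$ takes any fixed value is finite; and on $[x_*,x^*]$ one has $B_0=0$ a.e., hence $b_1=B_0=0$ there. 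Together these give $B_0=b_1$ a.e.\ on $\{w=0\}$ and close the argument (Lemma \ref{l B via hp}). Without this mode-structure input — which you only gesture at — your (ii)$\Rightarrow$(i) does not go through; note also that the obstruction is not "vanishing of $y^2$ on sets of positive measure" ($\theta$ has at most one zero, Lemma \ref{l turn int}(iii)), but constancy of $\arg\theta^2$ on the turning interval.
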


The proof (see Section \ref{ss proof of Si nl}) relies on the rotation properties of $\theta$
 described in Section \ref{ss turn int}.

\subsection{The turning interval $[x_*,x^*]$ and rotational properties of modes.}
\label{ss turn int}

By $\arg z$ we understood the multivalued argument function of a complex variable $z \neq 0$. 
In this subsection, we write $\theta (x)$ for $\theta (x,\om;B)$.

\begin{lem} \label{l turn int}
Let $B \in L^1$ and  $\om \in \Sigma(B)$. Assume that $\alpha = \re \om > 0$ and $B(x) \ge 0$ a.e. on $(a_1,a_2)$.
Then:
\item[(i)] There exists a  unique subinterval $[x_* , x^*]$ of $[a_1,a_2]$
with the properties that
\begin{multline}
\text{$\overline{\theta (x)} \pa_x \theta (x) \in \CC_-$
when $x < x_*$,} \qquad \text{$\overline{\theta (x)} \pa_x\theta(x)\in \RR$ when $x \in [x_*, x^*]$,} \qquad \\
\text{and \quad $\overline{\theta (x)} \pa_x\theta (x) \in \CC_+$  when $x>x^*$.}
\end{multline}
(It is supposed that $x_* \le x^*$ and that, in the case $x_*=x^*$, the \emph{turning} interval $[x_* , x^*]$
degenerates into a single \emph{turning} point $x_* = x^*$.
The meaning of 'turning' is explained by statement (\ref{e arg dec arg inc}) below).

\item[(ii)] If $x_* < x^*$, then
\begin{equation} \label{e B=0 th=const}
\text{
$B = 0$ a.e. on $(x_*, x^*)$ and $\pa_x \theta (x)$ is constant on $ [x_*, x^*]$.
}
\end{equation}
In particular, $B > 0$ a.e. yields $x_* = x^*$.
\item[(iii)] There exists at most one point $x_0 \in [a_1,a_2]$ such that $\theta (x_0) = 0$. If $x_0$ does exist,
it belongs to $[x_*, x^*]$.
(In the sequel, when $x_0$ does not exist, we assume $\{ x_0 \} = \varnothing$.)

\item[(iv)] The multifunction $\arg \theta (x)$ has a branch $\arg_* \theta (x)$ that is defined on $[a_1,a_2] \setminus \{x_0\}$
and has the following properties:
\begin{gather}
\text{ $\arg_* \theta (x)$ is differentiable on $[a_1,a_2] \setminus \{x_0\}$, } \notag \\
\text{$\pa_x \arg_* \theta < 0$ if  $x < x_*$, \quad
$\pa_x \arg_* > 0 $ if $x > x^*$, \quad 
$\pa_x \arg_* \theta = 0$ if  $x \in [x_* ,x^*] \setminus \{x_0\}$} .
\label{e arg dec arg inc}
\end{gather}

\item[(v)] $\pa_x \arg_* \theta (x)$ is bounded on $[a_1,a_2] \setminus \{x_0\}$. In particular,
for every $\xi \in [-\pi,\pi)$,
\[
\text{the set $\{ x \in [a_1,a_2] \setminus [x_*,x^*] \ : \ \arg \theta (x) = \xi (\modn 2\pi) \}$
is at most finite.}
\]
\end{lem}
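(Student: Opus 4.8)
The plan is to analyze the Wronskian-type quantity $W(x) := \overline{\theta(x)}\,\pa_x\theta(x)$ and its imaginary part, from which all five assertions follow. Since $\theta(x,\om;B)$ solves $\pa_x^2 y = -\om^2 B y$ with $B\ge 0$ real, a direct computation gives $\pa_x\!\left(\overline{\theta}\,\pa_x\theta\right) = |\pa_x\theta|^2 - \om^2 B\,|\theta|^2$, so that
\[
\pa_x\,\im W(x) = -\,(\im \om^2)\, B(x)\, |\theta(x)|^2 = 2\alpha\,(\im \om)\, B(x)\,|\theta(x)|^2 .
\]
Since $\alpha=\re\om>0$ and $\im\om<0$ by Lemma \ref{l prop Si}(i), the right-hand side is $\le 0$ and vanishes exactly where $B(x)|\theta(x)|^2=0$. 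Hence $\im W$ is nonincreasing on $[a_1,a_2]$. At $x=a_1$ the initial conditions $\theta(a_1)=1$, $\pa_x\theta(a_1)=-\ii\om\nu_1$ give $\im W(a_1) = -\,\re(\ii\om\nu_1) = \nu_1\,\im\om \le 0$; I will need a short separate argument (using the boundary condition at $a_2$ and the energy identity from the proof of Lemma \ref{l prop Si}(i)) to see that $\im W(a_2)\ge 0$ is impossible unless we are in a degenerate situation — actually the correct statement is $\im W(a_1)\le 0\le \im W(a_2)$ read in reverse, so I should check signs carefully: the monotonicity together with the endpoint values forces the zero set of $\im W$ to be a (possibly degenerate or empty-interior) closed interval. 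Defining $x_* := \inf\{x : \im W(x)=0\}$ and $x^* := \sup\{x : \im W(x) = 0\}$ (with the convention handling the case where $\im W$ never vanishes) yields (i), since $\im W<0$ strictly left of $x_*$ and $>0$ strictly right of $x^*$ by monotonicity and the definition of the infimum/supremum, while $\im W\equiv 0$ on $[x_*,x^*]$ — here I use that a monotone function that is zero at two points is zero in between.

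For (ii): on $(x_*,x^*)$ we have $\im W\equiv 0$, hence $\pa_x\im W\equiv 0$ a.e., so by the displayed formula $B(x)|\theta(x)|^2=0$ a.e. there. Combined with (iii) — which I prove next — this gives $B=0$ a.e. on $(x_*,x^*)$; then from $\pa_x^2\theta = -\om^2 B\theta$ we get $\pa_x^2\theta=0$ a.e. on $(x_*,x^*)$, so $\pa_x\theta$ is constant there. For (iii), suppose $\theta(x_0)=0$ at some $x_0$; then $\pa_x\theta(x_0)\ne 0$ (else $\theta\equiv 0$ by uniqueness of the ODE), and near $x_0$ we have $W(x_0)=0$ so $\im W(x_0)=0$, forcing $x_0\in[x_*,x^*]$. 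Uniqueness of $x_0$: if $\theta$ vanished at two points $x_0<x_0'$, both lie in $[x_*,x^*]$ where $B=0$ and $\theta$ is affine (being a solution of $y''=0$), so an affine function with two zeros is identically zero on $[x_*,x^*]$, contradicting $\pa_x\theta=$ const $\ne0$; one also has to note $\pa_x\theta$ is a nonzero constant on that interval because $W$ there is real and nonzero away from $x_0$, or argue directly. I expect this interlocking of (ii) and (iii) — each needs a piece of the other — to require the most care; the clean way is to first establish $B|\theta|^2=0$ a.e. on $(x_*,x^*)$ from (i), then prove (iii) using only that, then return to finish (ii).

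For (iv) and (v): away from $x_0$ we have $\theta(x)\ne0$, so a smooth branch $\arg_*\theta$ exists locally, and
\[
\pa_x \arg_*\theta(x) = \im\,\frac{\pa_x\theta(x)}{\theta(x)} = \frac{\im\!\left(\overline{\theta(x)}\,\pa_x\theta(x)\right)}{|\theta(x)|^2} = \frac{\im W(x)}{|\theta(x)|^2}.
\]
The sign statements in (\ref{e arg dec arg inc}) are then immediate from the sign of $\im W$ established in (i): negative for $x<x_*$, zero on $[x_*,x^*]\setminus\{x_0\}$, positive for $x>x^*$. Differentiability on all of $[a_1,a_2]\setminus\{x_0\}$ and the fact that a single branch extends across the interval follow because $\pa_x\arg_*\theta$ is locally integrable (indeed bounded, see below) with no sign changes except the monotone pattern, so the branch can be continued without ambiguity; the only potential obstruction is a zero of $\theta$, which is exactly $x_0$. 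For (v), boundedness: $\pa_x\theta$ is continuous (hence bounded) on $[a_1,a_2]$, and $\theta(x)$ is bounded away from $0$ on any closed subinterval avoiding a neighborhood of $x_0$; near $x_0$ (if it exists) we have $\theta(x) = \pa_x\theta(x_0)(x-x_0) + o(x-x_0)$ with $\pa_x\theta(x_0)\ne0$, while $\im W(x) = \im W(x_0) + O(|x-x_0|) = O(|x-x_0|)$ since $\im W$ is Lipschitz (its derivative is in $L^1$, indeed in $L^\infty$ locally because $B\in L^1$... more precisely $\im W$ is absolutely continuous and $\im W(x_0)=0$), so the quotient $\im W/|\theta|^2 = O(|x-x_0|)/O(|x-x_0|^2)$ — this naive bound is $O(1/|x-x_0|)$, which is \emph{not} obviously bounded, so here I must use more: since $x_0\in[x_*,x^*]$ where $\im W\equiv0$ and $B\equiv0$, in a neighborhood of $x_0$ on either side one is still inside or just outside $[x_*,x^*]$, and I should split cases. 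If $x_0\in(x_*,x^*)$ then $\pa_x\arg_*\theta\equiv0$ near $x_0$ and there is nothing to bound. If $x_0=x_*$ (or $x^*$), then on the side inside $[x_*,x^*]$ it is zero, and on the outside one estimates $\im W(x)$ using $\pa_x\im W = 2\alpha(\im\om)B|\theta|^2$, which near $x_0$ is $O(|x-x_0|^2)$ times $B$, giving $\im W(x) = O\!\big(\int_{x_0}^x B(s)(s-x_0)^2\,ds\big) = O(|x-x_0|^2\|B\|_1)$, whence the quotient is bounded. The finiteness of level sets of $\arg\theta$ outside $[x_*,x^*]$ then follows from strict monotonicity of $\arg_*\theta$ there (its derivative $\im W/|\theta|^2$ is bounded, but more importantly nonzero a.e. since $\im W\ne0$), so $\arg_*\theta$ is strictly monotone on each of the two components of $[a_1,a_2]\setminus[x_*,x^*]$ and hits each value mod $2\pi$ finitely often. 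The main obstacle, as flagged, is the boundedness near $x_0$ in (v): the honest argument uses that $x_0$ sits inside the flat interval $[x_*,x^*]$ together with the quadratic vanishing of $\pa_x\im W$ near a simple zero of $\theta$.
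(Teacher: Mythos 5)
Your overall strategy is the same as the paper's: study $W(x)=\overline{\theta(x)}\,\pa_x\theta(x)$, show that $\im W$ is monotone with appropriate signs at the endpoints, and deduce (i)--(v) from $\pa_x\arg_*\theta=\im W/|\theta|^2$. However, the central computation in your proof of (i) contains a sign error that reverses the monotonicity and makes the argument, as written, fail. From $\pa_x\im W=-\im(\om^2)\,B\,|\theta|^2$ and $\im(\om^2)=2\,\re\om\,\im\om=2\alpha\,\im\om$ one gets $\pa_x\im W=-2\alpha\,(\im\om)\,B\,|\theta|^2\ge 0$ (since $\alpha>0$ and $\im\om<0$), i.e.\ $\im W$ is \emph{nondecreasing}; you instead wrote $-\im\om^2=2\alpha\,\im\om$ and concluded that $\im W$ is nonincreasing. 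With the endpoint signs $\im W(a_1)\le 0\le \im W(a_2)$, a nonincreasing $\im W$ would force $\im W\equiv 0$, and the pattern ``negative for $x<x_*$, real on $[x_*,x^*]$, positive for $x>x^*$'' asserted in (i) cannot be extracted from your monotonicity claim; you flag the tension (``I should check signs carefully'') but never resolve it. You also never actually verify $\im W(a_2)\ge 0$: this needs the boundary condition (\ref{e bc a+}), which gives $\overline{\theta(a_2)}\,\pa_x\theta(a_2)=|\pa_x\theta(a_2)|^2/(-\ii\overline{\om}\nu_2)$, whose imaginary part equals $\frac{|\pa_x\theta(a_2)|^2}{\nu_2}\cdot\frac{\alpha}{|\om|^2}\ge 0$. (Your value $\im W(a_1)=\nu_1\im\om$ is also miscomputed; the correct value is $-\nu_1\alpha$, although the conclusion $\le 0$ happens to be right.) Once the monotonicity direction is corrected, defining $x_*=\min\{x:\im W(x)=0\}$ and $x^*=\max\{x:\im W(x)=0\}$ yields (i) exactly as in the paper.

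The remaining parts are essentially sound. Your resolution of the (ii)/(iii) interlock --- first obtain $B|\theta|^2=0$ a.e.\ on $(x_*,x^*)$, then get uniqueness of the zero (zeros of $\theta$ are simple, hence isolated and of measure zero, so $B=0$ a.e.\ between two hypothetical zeros and $\theta$ would be affine there with two zeros, forcing $\theta\equiv 0$), then finish (ii) --- is the right way and matches the paper's intent. In (v), near $x_0$ you argue differently from the paper: you use $\im W(x_0)=0$ (since $x_0\in[x_*,x^*]$) together with $|\theta(s)|=O(|s-x_0|)$ to bound $\im W(x)=O\bigl((x-x_0)^2\int B\bigr)$ against $|\theta(x)|^2\gtrsim (x-x_0)^2$, whereas the paper bounds a double-integral representation of $\im(\overline{\theta}\,\pa_x\theta)$; your estimate is valid and arguably more transparent. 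Just note that for the ``in particular'' clause of (v) you need both the strict monotonicity of $\arg_*\theta$ outside $[x_*,x^*]$ and the boundedness of its derivative (the latter keeps the range of $\arg_*\theta$ on each component finite, so each residue class mod $2\pi$ is attained only finitely often).
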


\begin{proof}
\textbf{(i)-(ii)} Since $\im \om <0 $ and $\re \om >0$, we have $\im \om ^2<0$ and
\begin{equation} \label{e im th path}
\pa_x \im (\overline{\theta}\pa_x\theta ) =
\im (\overline{\theta}\pa_x^2\theta)=\im (-\overline{\theta}\om^2 B \theta)=
B |\theta|^2 \im (-\om^2) \ge 0 \text{ a.e. }
\end{equation}
For $x=a_1$ and $x=a_2$ using (\ref{e th}) and (\ref{e bc a+}), one gets
$$\im [\overline{\theta(a_1)} \pa_x\theta(a_1)] = \nu_1\im(-\ii \om )=-\nu_1\alpha \le 0 $$
and
$$\im [\overline{\theta(a_2)} \pa_x\theta(a_2) ] =\im
\left[ \frac{\pa_x \overline{\theta(a_2)}}{-\ii \overline{\om} \nu_2} \pa_x \theta(a_2) \right]
=\frac{|\pa_x \theta(a_2)|^2}{\nu_2} \im \frac 1{ -\ii \overline{\om}} \ge 0.
$$
Hence, $\im( \overline{\theta}\pa_x\theta)$ is a nondecreasing continuous function having a zero in $[a_1, a_2]$.
Denote
\[
\text{$x_* = \min \{x : \im [\overline{\theta(x)} \pa_x\theta(x) ] = 0\}$
and $x^* = \max \{x : \im [\overline{\theta(x)} \pa_x\theta(x) ] = 0\}$.
}
\]
Then
\begin{equation} \label{e >0 <0 out x*x*}
\text{ $\im[\overline{\theta(x)}\pa_x \theta(x)]  < 0 $ for $x < x_* $, \quad and \quad
$\im[\overline{\theta(x)}\pa_x \theta(x)]  > 0 $ for $x > x^* $. }
\end{equation}

When $x_* < x^*$, (\ref{e im th path})  implies
\begin{equation} \label{e im=0}
\text{
$\im[\overline{\theta(x)}\pa_x \theta(x)] = 0 $ for all $x \in [x_*, x^*] $.
}
\end{equation}
Combining this with $\pa_x^2 \theta = - \om^2 B \theta$, we get
(\ref{e B=0 th=const}).

\textbf{(iii)} It follows from (i) that every $x$-zero of $\theta $ belongs to $[x_*,x^*]$.
If there exist two distinct zeroes, then (\ref{e B=0 th=const}) yields $\pa_x \theta \equiv 0$ on $[x_*,x^*]$ and so
$\theta \equiv 0$ on $[a_1,a_2]$, a contradiction (to (\ref{e th})).

\textbf{(iv)}
Consider an arbitrary continuous on $[a_1,a_2] \setminus \{ x_0 \}$ branch $\arg_* \theta (x)$
of $\arg \theta (x)$.
Combining the equality
\begin{equation} \label{e pax arg}
\frac{\im (\overline{\theta} \pa_x \theta) }{|\theta|^2} =
\im \frac{\pa_x \theta }{\theta} = \im \pa_x \ln \theta  = \pa_x \arg_* \theta
 \quad \text{ for } x \neq x_0,
\end{equation}
with
(\ref{e >0 <0 out x*x*}) and (\ref{e im=0}) we get (iv).

\textbf{(v)} 
Due to formula (\ref{e pax arg}), it is enough to prove that $\pa_x \arg_* \theta$
is bounded in a punctured neighborhood of $x_0$ assuming that $x_0$ exists.

To be specific, consider the case $x>x_0$.
Since $\theta (x_0) = 0$, one has
$\theta' (x_0) \neq 0$ and
$\theta (x) = \theta' (x_0) (x-x_0) - \om^2 \int_{x_0}^x (x-s) \theta (s) B (s) \dd s$.
Hence,
\begin{multline} \label{e pax arg int}
\pa_x \arg_* \theta = \frac{\im (\overline{\theta} \pa_x \theta) }{|\theta|^2} =
|\om|^4 \int_{x_0}^x \int_{x_0}^x B (s) B (t) \im \frac{(x-s) \overline{\theta (s)} \theta (t)}{|\theta (x)|^2} \ \dd s \ \dd t
= \\
= |\om|^4 \int_{x_0}^x \int_{x_0}^x B (s) B (t) \im \frac{(x-s) \overline{\theta (s)} (t-s)  \theta' (p(t,s))}{|\theta (x)|^2} \ \dd s \ \dd t ,
\end{multline}
where $p(t,s)$ lies between $t$ and $s$.
Since $\frac{(x-s) (t-s)}{|\theta (x)|^2} $ is bounded in the square $(s,t) \in (x_0,x)^2 $ (recall that $\theta' (x_0) \neq 0$)
and $\theta, \theta' \in C [a_1,a_2]$,
we see that the last integral in (\ref{e pax arg int}) is uniformly bounded for $x>x_0$.
\end{proof}

Now consider modes $\theta$ corresponding quasi-eigenvalues on the axis $\ii \RR$.
In this case, the analysis is slightly different, but simpler, since
\begin{equation} \label{e theta real}
\theta(x,z;B) \text{ are real when } \ \ z \in \ii \RR
\text{ and } B \in L^1_\RR (a_1,a_2).
\end{equation}

\begin{lem} \label{arg th with alf=0}
Let $\om \in \Sigma(B)$, $ \om = \ii\beta,\ \beta\in\RR$, and   $B(x)\geq0 $ a.e.  on $(a_1,a_2)$. Then:
\item[(i)] There exists a  unique subinterval $[x_*,x^*]$ of $[a_1,a_2]$ such that
\begin{multline*} \label{e th pax th iR}
\text{$\theta (x) \pa_x\theta(x)<0$
if $x < x_*$,} \qquad \text{$\theta (x) \pa_x\theta(x)=0$ if $x \in [x_*, x^*]$,} \qquad \\
\text{and \quad $\theta (x) \pa_x\theta(x)>0$  if $x>x^*$.}
\end{multline*}

\item[(ii)] If $x_*<x^*$,  then $B(x)=0$  a.e. on $(x_*,x^*)$ and
$\theta (x)$ is  a nonzero constant function on $[x_*,x^*].$
In particular, if  $B(x)>0$  a.e. on $(a_1,a_2)$, then
 $x_*=x^*$.

\item[(iii)] If $\theta (x_0) = 0$, then $x_0 = x_* = x^*$.
\end{lem}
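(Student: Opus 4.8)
The plan is to transport the proof of Lemma~\ref{l turn int} to the simpler, real setting dictated by (\ref{e theta real}): since $\theta(\cdot,\om;B)$ is real-valued when $\om=\ii\beta\in\ii\RR$ and $B\in L^1_\RR$, I would replace the tracking of $\im(\overline{\theta}\,\pa_x\theta)$ by the tracking of the real scalar $g(x):=\theta(x)\,\pa_x\theta(x)$. First I would record two facts. Because $B\ge 0$ a.e.\ and $\om=\ii\beta\in\Si(B)$, Lemma~\ref{l prop Si}(i) gives $\beta=\im\om<0$, so in particular $\beta\ne 0$. Using $\om^2=-\beta^2$ together with $\pa_x^2\theta=\beta^2 B\,\theta$, one has for a.e.\ $x$
\[
\pa_x g=(\pa_x\theta)^2+\theta\,\pa_x^2\theta=(\pa_x\theta)^2+\beta^2 B\,\theta^2\ge 0 ,
\]
hence $g$ is continuous and nondecreasing on $[a_1,a_2]$. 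At the endpoints, (\ref{e th}) gives $g(a_1)=1\cdot(-\ii\om\nu_1)=\beta\nu_1\le 0$; and (\ref{e bc a+}) gives $g(a_2)=0$ when $\nu_2=+\infty$ (the Dirichlet case $\theta(a_2)=0$), while for $\nu_2\in(0,+\infty)$ one has $\pa_x\theta(a_2)=-\beta\nu_2\,\theta(a_2)$, so $g(a_2)=-\beta\nu_2\,\theta(a_2)^2\ge 0$. Thus $g$ vanishes somewhere in $[a_1,a_2]$.

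Next I would set $x_*:=\min\{x:g(x)=0\}$ and $x^*:=\max\{x:g(x)=0\}$; since $g$ is continuous and nondecreasing, its zero set is exactly the closed interval $[x_*,x^*]$, with $g<0$ on $[a_1,x_*)$ and $g>0$ on $(x^*,a_2]$. As $g=\theta\,\pa_x\theta$, this is (i), and these three sign conditions determine $[x_*,x^*]$ uniquely. For (ii) I would assume $x_*<x^*$; then $g\equiv 0$ on $[x_*,x^*]$, so $\pa_x g=(\pa_x\theta)^2+\beta^2 B\,\theta^2=0$ a.e.\ on $(x_*,x^*)$, and since $\beta\ne 0$ and $B\ge 0$ both nonnegative terms must vanish a.e.\ there. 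Hence $\pa_x\theta=0$ a.e.\ on $(x_*,x^*)$, and $\pa_x\theta$ being continuous, $\theta$ equals a constant $c$ on $[x_*,x^*]$. If $c=0$ then $\theta(x_*)=\pa_x\theta(x_*)=0$, and uniqueness for the integral equation (\ref{e int ep th}) based at $x_*$ would force $\theta\equiv 0$, contradicting $\theta(a_1)=1$; so $c\ne 0$, and then $\beta^2 B c^2=0$ a.e.\ on $(x_*,x^*)$ yields $B=0$ a.e.\ there. In particular $B>0$ a.e.\ on $(a_1,a_2)$ is incompatible with $x_*<x^*$, i.e.\ $x_*=x^*$.

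Finally, for (iii): if $\theta$ vanishes at some $x_0\in[a_1,a_2]$, then $g(x_0)=0$, so $x_0\in[x_*,x^*]$ by (i); if $x_*<x^*$ then (ii) says $\theta$ is a \emph{nonzero} constant on $[x_*,x^*]\ni x_0$, a contradiction, whence $x_*=x^*=x_0$. The only step requiring genuine care is the nonvanishing of the constant value of $\theta$ on a nondegenerate turning interval, which I would obtain from uniqueness of solutions of the linear Cauchy problem (equivalently, from (\ref{e int ep th})); the endpoint computations, including the $\nu_2=+\infty$ case, are routine. I do not expect any serious obstacle: the present lemma is a one-dimensional, real analogue of Lemma~\ref{l turn int}, and it has no counterpart of parts (iv)--(v) precisely because a real-valued $\theta$ has locally constant argument.
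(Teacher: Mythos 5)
Your proof is correct and follows essentially the same route as the paper: you monitor $g=\theta\,\pa_x\theta$, use $\pa_x g=(\pa_x\theta)^2+\beta^2B\theta^2\ge 0$ together with the endpoint signs from (\ref{e th}) and (\ref{e bc a+}) to produce the turning interval, and then deduce (ii) from the vanishing of the nonnegative integrand plus $\theta\not\equiv 0$ (the paper compresses the ``nonzero constant'' step you spell out via uniqueness of the Cauchy problem), with (iii) an immediate consequence.
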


\begin{proof}
Let $x_*=\min \{x \in [a_1,a_2] \ : \ \theta (x) \pa_x\theta(x)=0 \}$ and $x^*=\max \{x \in [a_1,a_2] \ : \ \theta (x) \pa_x\theta(x)=0 \}.$
The existence of $x_*$ and $x^*$ follows from (\ref{e bc a-})-(\ref{e bc a+}) and $\theta  \pa_x\theta \in C [a_1,a_2]$.
Since
\[
\pa_x (\theta\pa_x\theta)=(\pa_x\theta )^2+\theta\pa_x^2\theta=
(\pa_x\theta)^2+\beta^2B\theta^2\geq0 \ \ \text{a.e. on }(a_1,a_2),
\]
we obtain (i). The equality 
\[
 0= \theta (x^*) \pa_x \theta (x^*) -  \theta (x_*) \pa_x \theta (x_*) = \int_{x_*}^{x^*} \pa_x (\theta \pa_x \theta)\ \dd x=
\int_{x_*}^{x^*} [(\pa_x \theta )^2 + \beta^2 B \theta^2] \dd x
\]
implies $\pa_x \theta(x)=0$ and $B(x)=0$ a.e. on $[x_*,x^*]$.
When $x_*<x^*$, this and $\theta \not \equiv 0$ yield statement (ii). Statement (iii) follows from (ii).
\end{proof}

\subsection{Proof of Theorem \ref{t char Si nl}}
\label{ss proof of Si nl}

Let $\alpha=\re \om \neq 0$. Due to Lemma \ref{l prop Si} (ii) and (\ref{e Snl sym}), 
without loss of generality one can assume $\alpha>0$.

Suppose $\om \in \Si(B_0)$ for certain $B_0 \in \A$.
Put
\[
 \ \  C_0=\frac{\partial_x\theta(a_2,\om,B_0)}{ \om^2} .
\]
By (\ref{e paBD F}), 
\[
C_0 \frac{\pa F (\om;B_0) }{\pa B}(V)=\int_{a_1}^{a_2}
\theta^2 (s,\om;B_0) \ V (s) \ \dd s \ .
\]

Define the set $S_0 \subset \CC$ by
\begin{equation} \label{S_0}
 S_0 :=C_0 \frac{\pa F (\om;B_0)}{\pa B} [\A - B_0] =
\left\{ \int_{a_1}^{a_2}
\theta^2 (s,\om;B_0) \ V (s) \ \dd s \  \ : \ V \in \A-B_0 \right\}.
\end{equation}

The definition of $K (\om,B_0;V)$ in (\ref{e Om asy}) implies the equivalence
\begin{equation} \label{e cone <=> cone}
\cone S_0 \neq \CC  \quad \Longleftrightarrow \quad \cone K (\om, B_0; \A - B_0) \neq \CC .
\end{equation}

\textbf{Proof of implication (i) $\Rightarrow $(ii).} Let $y$ be an eigenfunction of (\ref{e nonlin}), (\ref{e bc a-}), (\ref{e bc a+}) associated with $\om$.
Let us define $B_0$ by equality (\ref{e B0=b1b2}) for all $x \in [a_1,a_2]$.
Then $\om \in \Si (B_0)$. Moreover, there exist $C \in \RR \setminus \{ 0 \}$ and
$\xi_* \in [-\pi,\pi)$ such that $y (x) = C e^{-\ii \xi_* /2} \theta (x, \om;B_0)$.
It follows from (\ref{e B0=b1b2}) that
\begin{equation} \label{e B*=b1b2}
B_0 (x) = \left\{ \begin{array}{cc}
b_1 (x), & \text{ when \ \  $\theta^2 (x) \in e^{i\xi_*}\overline{\CC_-}$} \\
b_2 (x), & \text{ when \ \ $\theta^2 (x) \in e^{i\xi_*} \CC_+$}
\end{array} \right. .
\end{equation}
This implies that for every $V \in \A-B_0$ the following assertion hold:
\begin{eqnarray*}
V (x) \ge 0 & \quad & \text{ for a.a. $x$ such that } \theta^2 (x) \in e^{i\xi_*}\overline{\CC_-} , \\
V (x) \le 0 & \quad & \text{ for a.a. $x$ such that } \theta^2 (x) \in e^{i\xi_*}\CC_+ .
\end{eqnarray*}
So
$\int_{a_1}^{a_2} \theta^2 V \ \dd s \in  e^{i\xi_*}\overline{\CC_-} $ for each $V \in \A-B_0$.
Thus, $\cone S_0 \subset  e^{i\xi_*}\overline{\CC_-}$. Equivalence  (\ref{e cone <=> cone}) concludes the proof of (ii).

\textbf{Proof of implication (ii) $\Rightarrow $(i).}
Put $\arg_* \theta^2 (x) := 2 \arg_* \theta (x)$.
Define the sets $E^\pm_s \subset [a_1,a_2] $ by
$$
E^+_s : = \supp [b_2-B_0]_+ , \qquad
 E^-_s := \supp [B_0-b_1]_+ ,
$$
(for  $[\cdot ]_+$ see the notation part of Section \ref{s Intro}).

\begin{lem} \label{l th Epm}
Let $\re \omega > 0$ and $\theta (x) = \theta (x,\om;B_0)$. Then
$$ \theta^2 [E_s^+ ] \cup \left( -\theta^2 [E_s^-] \right) \subset \ \ \overline{\cone S_0},$$
i.e., the image of the set $E^+_s $ under the function $\theta^2$ and
the image of the set $E^-_s$ under the function $(-\theta^2)$ are subsets of the closed cone $\overline{\cone S_0}$
\end{lem}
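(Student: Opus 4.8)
The plan is to realize every point of $\theta^2[E_s^+]$, and every point of $-\theta^2[E_s^-]$, as a limit of normalized elements of $\cone S_0$ produced by admissible perturbations $V$ concentrated near the relevant point. The starting observation is that, since $B_0 \in \A$, we have $b_1 \le B_0 \le b_2$ a.e., so $[b_2 - B_0]_+ = b_2 - B_0$ and $[B_0 - b_1]_+ = B_0 - b_1$ a.e.; both are nonnegative functions in $L^1_\RR (a_1,a_2)$ (being dominated by $b_2$), and their topological supports are, by definition, $E_s^+$ and $E_s^-$.

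First I handle $E_s^+$. Fix $x_1 \in E_s^+$ and, for small $\epsilon>0$, put $U_\epsilon := (x_1-\epsilon, x_1+\epsilon) \cap (a_1,a_2)$ and $V_\epsilon := (b_2 - B_0)\chi_{U_\epsilon}$. Then $0 \le V_\epsilon \le b_2 - B_0$ a.e., hence $b_1 \le B_0 \le B_0 + V_\epsilon \le b_2$ a.e., i.e. $V_\epsilon \in \A - B_0$; consequently $\int_{a_1}^{a_2} \theta^2 (s,\om;B_0) V_\epsilon (s)\, \dd s \in S_0$. Since $x_1$ belongs to the support of the nonnegative $L^1$-function $b_2 - B_0$, the number $c_\epsilon := \int_{U_\epsilon} (b_2 - B_0)\, \dd s$ is strictly positive, so the vector $w_\epsilon := c_\epsilon^{-1} \int_{U_\epsilon} \theta^2 (s) (b_2(s)-B_0(s))\, \dd s$ is a positive multiple of an element of $S_0$ and therefore lies in $\cone S_0$. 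As $w_\epsilon$ is a weighted average of the values of $\theta^2$ over $U_\epsilon$ with the nonnegative weight $b_2 - B_0$, and $\theta^2 = \theta^2(\cdot,\om;B_0)$ is continuous on $[a_1,a_2]$ by Lemma \ref{l an}, we get $w_\epsilon \to \theta^2 (x_1)$ as $\epsilon \to 0^+$, whence $\theta^2 (x_1) \in \overline{\cone S_0}$.

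The case of $E_s^-$ is symmetric, with the sign change carried by the perturbation. For $x_1 \in E_s^-$ put $V_\epsilon := -(B_0 - b_1)\chi_{U_\epsilon}$, so that $V_\epsilon \le 0$ and $b_1 \le B_0 + V_\epsilon \le B_0 \le b_2$ a.e.; thus $V_\epsilon \in \A - B_0$ and $\int \theta^2 V_\epsilon\, \dd s \in S_0$. With $c'_\epsilon := \int_{U_\epsilon} (B_0 - b_1)\, \dd s > 0$ (again because $x_1$ is a support point), the vector $(c'_\epsilon)^{-1} \int \theta^2 V_\epsilon\, \dd s = -(c'_\epsilon)^{-1} \int_{U_\epsilon} \theta^2 (s)(B_0(s) - b_1(s))\, \dd s$ belongs to $\cone S_0$ and equals minus a weighted average of $\theta^2$ over $U_\epsilon$, hence converges to $-\theta^2 (x_1)$ as $\epsilon \to 0^+$. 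Therefore $-\theta^2 (x_1) \in \overline{\cone S_0}$, which completes the proof.

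The only genuinely delicate point is the repeated use of the implication ``$x_1 \in \supp g \Rightarrow \int_U g > 0$ for every neighbourhood $U$ of $x_1$'' for the nonnegative integrable functions $g = b_2 - B_0$ and $g = B_0 - b_1$ (this is immediate from the notion of topological support recalled in Section \ref{s Intro}), together with the verification that the truncated perturbations $V_\epsilon$ genuinely lie in $\A - B_0$; the passage to the limit is then a routine approximate-identity argument resting on the (uniform) continuity of $\theta^2$.
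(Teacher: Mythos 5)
Your proof is correct, and it uses exactly the same localized perturbations as the paper: $V_\epsilon^+ = [b_2-B_0]_+\,\chi_{(x_1-\epsilon,x_1+\epsilon)}$ and $V_\epsilon^- = -[B_0-b_1]_+\,\chi_{(x_1-\epsilon,x_1+\epsilon)}$, together with the same observations that these lie in $\A-B_0$ and that a support point forces $\int_{U_\epsilon}[b_2-B_0]_+>0$ (resp. $\int_{U_\epsilon}[B_0-b_1]_+>0$). Where you genuinely depart from the paper is in the limit passage. The paper works with phases: it shows $\arg\bigl[C_0\,\tfrac{\pa F}{\pa B}(V_\epsilon^\pm)\bigr]\to\arg_*\theta^2(x_1)$ (resp. $\arg_*\theta^2(x_1)+\pi$), which requires the continuity of $\arg_*\theta^2$ away from the at most one zero $x_0$ of $\theta$ — i.e. Lemma \ref{l turn int} (iv) and hence the standing hypotheses $\re\om>0$, $B_0\ge 0$ — plus a separate Step 1 disposing of $x_0$ via $\theta^2(x_0)=0\in\overline{\cone S_0}$. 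You instead normalize by $c_\epsilon$ and note that the resulting weighted averages of $\theta^2$ converge to $\theta^2(x_1)$ by plain (uniform) continuity of $\theta\in W^{2,1}\subset C[a_1,a_2]$; since each average is a nonnegative multiple of an element of $S_0$, the limit lies in $\overline{\cone S_0}$. This buys a cleaner argument: no rotational machinery, no case distinction at $x_0$ or at the endpoints of $[a_1,a_2]$, and in fact no use of $\re\om>0$ for this particular lemma (the turning-interval lemma is of course still needed later, e.g. in the proof of Lemma \ref{l B via hp}, so it is not rendered superfluous overall). Both routes land on the same inclusion, and yours is the more elementary of the two.
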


\begin{proof}Recall that the point $x_0$ was defined in
Lemma \ref{l turn int} and that, in the case when $x_0$ does not exist, the notation $\{ x_0 \}$ means the empty set.

\emph{Step 1.} If $x_0$ exists, $\theta (x_0) = 0 \in \overline{\cone S_0}$.

\emph{Step 2.} Suppose $x_1 \in  E^+_s \setminus \{ x_0 \}.$
Then $V_{\varepsilon}^+ (x) :=\chi_{(x_1-\varepsilon,x_1+\varepsilon)} (x) \ [b_2-B_0]_+ (x)$ has a nonempty support for every $\ep>0$.
Let us show that (for a suitable choice of values of $\arg$)
\begin{equation} \label{arg pr1-arg t*}
\arg \left[ C_0 \frac{\pa F (\om;B_0)}{\pa B} (V_{\varepsilon}^+) \right] - \arg_* \theta^2 (x_1) \rightarrow 0 \quad
\text{ as } \varepsilon \rightarrow 0 .
\end{equation}

By Lemma \ref{l turn int} (iv), 
\begin{equation} \label{e arg th x1}
\text{$\arg_* \theta^2 $ is defined and continuous
in a vicinity of $x_1$.}
\end{equation}
For small enough $\ep>0$,
$|\arg_* \theta^2 (x) - \arg_* \theta^2 (x_1)| < \pi/2$ in the interval $x \in (x_1-\ep,x_1+\ep)$ and therefore
the equality
\begin{equation*} \label{v1 v S-0}
C_0 \frac{\pa F}{\pa B} (V_{\varepsilon}^+)  =
\int_{a_1}^{a_2} \theta^2  \ V_{\varepsilon}^+  \ \dd s= \int_{x_1-\varepsilon}^{x_1+\varepsilon}
\theta^2  [b_2-B_0]_+  \dd s,
\end{equation*}
implies that
$C_0 \frac{\pa F}{\pa B} (V_{\varepsilon}^+) \in \overline{\cone \theta^2 [(x_1-\varepsilon,x_1+\varepsilon)] } \ \setminus \ \{0\}$,
where $\theta^2 [(x_1-\varepsilon,x_1+\varepsilon)] $ is the image of $(x_1-\varepsilon,x_1+\varepsilon)$
under $\theta^2$. This and (\ref{e arg th x1}) yields (\ref{arg pr1-arg t*}).

\emph{Step 3.}
Suppose $x_1 \in  E^-_s \setminus \{ x_0 \}.$ Then, similar to Step 2,
$V_{\varepsilon}^- (x):= - \chi_{(x_1-\varepsilon,x_1+\varepsilon)}(x) [B_0-b_1]_+ (x)$ has a nonempty support for small $\ep$
and
$\arg \left[ C_0 \frac{\pa F }{\pa B} (V_{\varepsilon}^-) \right] - \arg_* \theta^2 (x_1)  - \pi \rightarrow 0 $
as $\varepsilon \rightarrow 0 $.
In other words,
\[
\arg C_0 \frac{\pa F}{\pa B} (V_{\varepsilon})- \arg [ -\theta^2 (x_1) ] \rightarrow 0
\quad
\text{ as } \varepsilon \rightarrow 0 .
\]

\emph{Step 4}. Since $B+V_{\varepsilon}^\pm \in \A$, we see from Steps 2-3 that
$\theta^2(x_1) \in \overline{\cone S_0}$ whenever $x_1 \in  E^+_s \setminus \{ x_0 \} $,
and $[-\theta^2(x_1)] \in \overline{\cone S_0} $ whenever $x_1 \in  E^-_s \setminus \{ x_0 \} $.
This completes the proof.
\end{proof}

\begin{lem} \label{l B via hp}
Let $\re \omega > 0$. Assume that $\cone S_0$ is contained
in a certain closed half-plane $e^{i\xi_*}\overline{\CC_-}$, where $\xi_* \in [-\pi,\pi)$.
Then:
\item[(i)] equality (\ref{e B*=b1b2}) holds for a.a. $x \in [a_1,a_2]$,
\item[(ii)] $\om \in \Si^\nl$ and $y = e^{-i\xi_*/2} \theta$
is an associated eigenfunction of (\ref{e nonlin}), (\ref{e bc a-}), (\ref{e bc a+}).
\end{lem}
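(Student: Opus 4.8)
The plan is to prove the pointwise (a.e.) description (i) first and then read off (ii) from it. Throughout I would write $\theta(x) := \theta(x,\om;B_0)$; since $B_0 \in \A$ one has $B_0 \ge 0$ a.e., and $\re\om > 0$ together with $\om \in \Si(B_0)$ (which is implicit in the definition of $S_0$) puts us in the setting of Lemmas \ref{l turn int} and \ref{l th Epm}.

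For (i), I would split $[a_1,a_2]$ according to the position of $\theta^2(x)$ relative to the line $e^{i\xi_*}\RR$, setting $U^+ := \{x : \theta^2(x) \in e^{i\xi_*}\CC_+\}$ and $U^- := \{x : \theta^2(x) \in e^{i\xi_*}\overline{\CC_-}\}$; these are disjoint and cover $[a_1,a_2]$ because $\CC = \CC_+ \sqcup \overline{\CC_-}$, and (\ref{e B*=b1b2}) is exactly the assertion that $B_0 = b_2$ a.e. on $U^+$ and $B_0 = b_1$ a.e. on $U^-$. From the definition of the topological support, $\{B_0 < b_2\} \subseteq E_s^+$ and $\{B_0 > b_1\} \subseteq E_s^-$ up to null sets, while Lemma \ref{l th Epm} and the hypothesis give $\theta^2[E_s^+] \subseteq \overline{\cone S_0} \subseteq e^{i\xi_*}\overline{\CC_-}$ and $\theta^2[E_s^-] \subseteq -\overline{\cone S_0} \subseteq e^{i\xi_*}\overline{\CC_+}$. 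The first inclusion forces $E_s^+ \cap U^+ = \varnothing$ (the rotated open upper and closed lower half-planes are disjoint), hence $\meas(U^+ \cap \{B_0 < b_2\}) = 0$, which is the first half of (\ref{e B*=b1b2}). For the second half, a point of $E_s^- \cap U^-$ would have $\theta^2(x) \in e^{i\xi_*}\overline{\CC_+} \cap e^{i\xi_*}\overline{\CC_-} = e^{i\xi_*}\RR$, so $\{B_0 > b_1\} \cap U^- \subseteq L$ up to a null set, where $L := \{x : \theta^2(x) \in e^{i\xi_*}\RR\}$; it then remains to prove $\meas(L \cap \{B_0 > b_1\}) = 0$.

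This last point is the step I expect to be the main obstacle, since on the line $e^{i\xi_*}\RR$ the half-plane information supplied by Lemma \ref{l th Epm} is vacuous and one has to use the rotational structure of $\theta$ instead. I would split $L = (L \setminus [x_*,x^*]) \cup (L \cap [x_*,x^*])$ with $[x_*,x^*]$ the turning interval of Lemma \ref{l turn int}. Outside $[x_*,x^*]$ one has $\theta(x) \ne 0$ (Lemma \ref{l turn int}(iii)), so $\theta^2(x) \in e^{i\xi_*}\RR$ forces $\arg\theta(x)$ to be congruent modulo $2\pi$ to one of the four fixed values $\xi_*/2 + k\pi/2$, $k = 0,1,2,3$; by Lemma \ref{l turn int}(v) each of the four corresponding level sets is finite, so $L \setminus [x_*,x^*]$ is finite. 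On $[x_*,x^*]$: it is a single point if $x_* = x^*$, whereas if $x_* < x^*$ then Lemma \ref{l turn int}(ii) gives $B_0 = 0$ a.e. on $(x_*,x^*)$, and since $0 \le b_1 \le B_0$ this forces $b_1 = 0 = B_0$ a.e. there, so $\{B_0 > b_1\} \cap [x_*,x^*]$ is null. Hence $\meas(L \cap \{B_0 > b_1\}) = 0$, so $\meas(U^- \cap \{B_0 > b_1\}) = 0$, and (i) is proved.

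For (ii), I would set $y := e^{-i\xi_*/2}\theta$, so that $y^2 = e^{-i\xi_*}\theta^2$ and therefore $\ChiCpl(y^2(x)) = 1$ exactly when $\theta^2(x) \in e^{i\xi_*}\CC_+$ and $\ChiCpl(y^2(x)) = 0$ exactly when $\theta^2(x) \in e^{i\xi_*}\overline{\CC_-}$; by (i) this says $b_1 + (b_2 - b_1)\ChiCpl(y^2) = B_0$ a.e. Since $y'' = e^{-i\xi_*/2}\theta'' = -\om^2 B_0 y$, it follows that $y$ solves (\ref{e nonlin}) a.e. Finally $\theta$ satisfies (\ref{e bc a-}) by (\ref{e th}) and satisfies (\ref{e bc a+}) because $F(\om;B_0) = 0$, cf. (\ref{e F th}); these conditions are linear and homogeneous, so $y = e^{-i\xi_*/2}\theta$ satisfies them too, and $y \not\equiv 0$ since $\theta(a_1) = 1$. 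This yields $\om \in \Si^\nl$ with eigenfunction $y$, as claimed.
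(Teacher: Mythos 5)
Your proof is correct and follows essentially the same route as the paper: both arguments combine Lemma \ref{l th Epm} with the hypothesis to place $\theta^2[E_s^+]$ and $\theta^2[E_s^-]$ in the rotated closed half-planes, and then dispose of the exceptional set where $\theta^2 \in e^{i\xi_*}\RR$ via Lemma \ref{l turn int} (finiteness of the level sets of $\arg_*\theta$ outside $[x_*,x^*]$, and $B_0=b_1=0$ a.e. on the turning interval), after which (ii) is read off exactly as you do. The only differences are cosmetic bookkeeping (your partition via $U^\pm$ and $\{B_0<b_2\}$, $\{B_0>b_1\}$ versus the paper's exceptional sets $X_0,\dots,X_3$, and a direct argument where the paper argues by contradiction).
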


\begin{proof}
\textbf{(i)} Recall that the set $E$ is defined by (\ref{e setE}).
From the definition of $E$ and $B_0 \in \A$, we see that
$E \subset X_0 \cup E^+_s \cup E^-_s  $, where $X_0$ is of zero measure.
Clearly, $\overline{\cone S_0} \subset  e^{i\xi_*}\overline{\CC_-}$.
By Lemma \ref{l th Epm},
\begin{equation} \label{e thEpm halfpl}
\text{$\theta^2 [E^+_s] \subset e^{i\xi_*}\overline{\CC_-} $ \qquad and \qquad
$\theta^2 [E^-_s] \subset e^{i\xi_*}\overline{\CC_+} $.}
\end{equation}
So $E^+_s \cap E^-_s \subset \{x : \theta^2 (x) \in e^{i\xi_*} \RR \}$. By Lemma \ref{l turn int} (iv)-(v),
\begin{equation} \label{e X1}
\text{the set \  
$X_1 := \   \{x : \theta (x) \in e^{i\xi_*} \RR \} \ \setminus \  [x_*,x^*]$ \ is at most finite.}
\end{equation}
Thus, $E \subset X_0 \cup X_1 \cup [x_*,x^*] \cup (E^+_s \setminus E^-_s) \cup
(E^-_s \setminus E^+_s)$.
On the other side, $B_0 (x) = b_1 (x) = 0$ a.e. on $[x_*,x^*] $ due to Lemma \ref{l turn int} (ii).
So $X_2 := ([x_*,x^*] \cap E) \setminus (E^+_s \setminus E^-_s) $ is of zero measure.

Summarizing, we see that the interval $[a_1,a_2]$ can be written as
\[
[a_1,a_2] = ([a_1,a_2] \setminus E) \cup X_0 \cup X_1 \cup X_2 \cup (E^+_s \setminus E^-_s) \cup
(E^-_s \setminus E^+_s) .
\]
On each of the sets of positive measure in the right side, equality (\ref{e B*=b1b2}) is fulfilled for a.a. $x$.

Additional explanations are needed for fact that $\theta^2 (x) \in e^{i\xi_*} \CC_+$ for a.a. $x \in E^-_s \setminus E^+_s$.
Assume that $x_1$ belongs to the set $X_3$ consisting of $x$ such that
$x \in E^-_s \setminus E^+_s$ and $\theta^2 (x)  \not \in e^{i\xi_*} \CC_+$.
By (\ref{e thEpm halfpl}), $ \theta^2 (x_1) \in e^{i\xi_*} \RR $.
By (\ref{e X1}), $X_3 \subset X_1 \cup [x_*,x^*]$. Assume that $\meas X_3 > 0$. Then $\meas (X_3 \cap [x_*,x^*])>0 $.
This yields that $\meas \left[ \supp [B-b_1]_+ \cap (x_*,x^*) \right] >0 $. The latter contradicts Lemma \ref{l turn int} (ii).
This contradiction shows that $\meas X_3 = 0$.

\textbf{(ii)}  Equality (\ref{e B*=b1b2}) implies (\ref{e B0=b1b2}). The latter yields (ii).
\end{proof}

Equivalence (\ref{e cone <=> cone}) and Lemmata \ref{l th Epm}, \ref{l B via hp} imply statement (i) of Theorem \ref{t char Si nl}.

\textbf{The last assertion of Theorem \ref{t char Si nl}} follows from 
Lemma \ref{l B via hp} (ii) and its proof.

\section{Local extremizers and optimization for a fixed frequency}
\label{s opt problem}

\subsection{Definitions of various optimizers}
\label{ss def optim}

We say that $\alpha \in \RR$ is \emph{an admissible frequency} if $\alpha = \re \om$ for some admissible quasi-eigenvalue $\om$. So $\re \Si [\A]$ is the set of admissible frequencies. Its properties are considered in Subsection \ref{ss ex A} and Appendix \ref{a adm fr},
where it is proved, in particular, that high enough frequencies are admissible at least for most popular settings of quasi-eigenvalue optimization problem.

\begin{defn}[\cite{Ka14}]
Let $\alpha$ be an admissible frequency.

\item[(i)] \emph{The minimal decay rate $\beta_{\min} (\alpha)$ for the frequency} $\alpha$
is defined by
\[
\beta_{\min} (\alpha) := \ \inf \{ \beta \in \RR \ : \ \alpha - \ii \beta  \in \Si [\A] \} .
\]

\item[(ii)] If $\om = \alpha - \ii \beta_{\min} (\alpha)$ is a quasi-eigenvalue for a certain admissible structure
$B \in \A$ (i.e., the minimum is achieved), we say that $\om $ and $B$ are of \emph{minimal decay for the frequency} $\alpha $.
\end{defn}

\begin{figure}[h]
    \centering
\includegraphics[width=\linewidth]{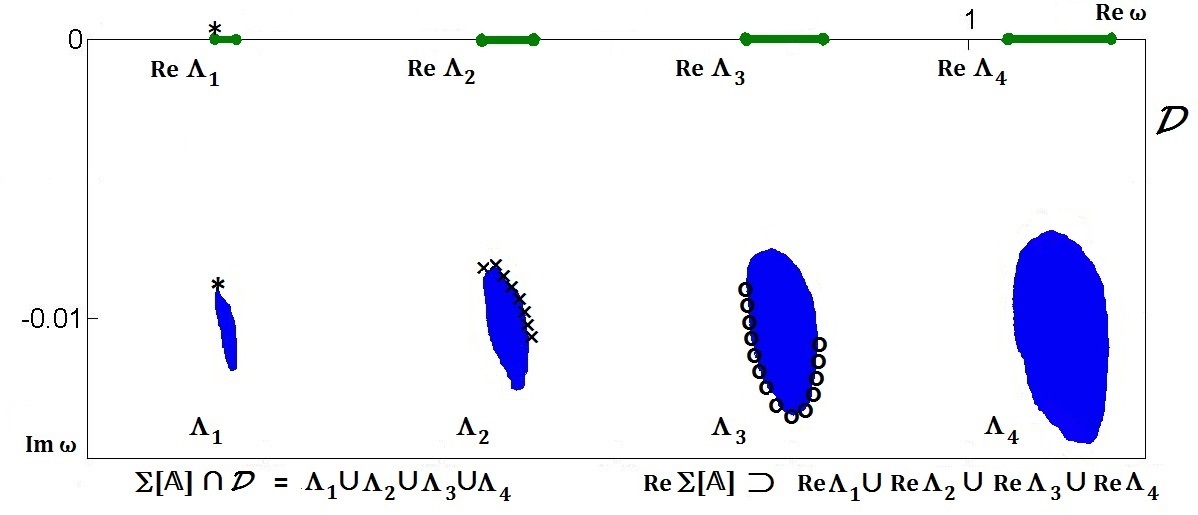}
    \caption{A schematic drawing (see explanations in Remark 
    \ref{r drawing}) of the part of the set of
admissible quasi-eigenvalues 
$\Si [\A]$ in the rectangle $\D = \{\re z \in (0, 1.2) , \ \im z \in (-0.015, 0) \}$ 
and of intervals of admissible frequencies generated by $\Si [\A] \cap \D$
for the following choice of parameters:   
$\nu_1 = 1$, $\nu_2 =+\infty$, $b_1 = 90$, $b_2 = 110$, $a_2 - a_1 = 1$. 
\newline
'*' \ \ mark one of local minimizer for $\beta_{\min}$ and the corresponding quasi-eigenvalue; \newline
'$\times$' \  mark quasi-eigenvalues of minimal decay for frequencies $\alpha$ in $\re \La_2$; \newline
'o' \ \ mark $\om$-parts of local minimizers of $\im \om$ for frequencies $\alpha$  in $\re \La_3$.
}
    \label{f drawing}
\end{figure}

A pair $(\om,B) \in \CC \times L^1$ is called admissible if $B\in \A$ and $\om \in \Si (B)$.

\begin{defn}
We say that an admissible pair $(\om_0 , B_0 ) $ is \emph{a local maximizer of $\im \om $ for the frequency $\re \om_0$} if there exist $\ep >0$ such that
$\im \om_1  \le \im \om_0 $ for any admissible
pair $(\om_1 , B_1 )$ satisfying the following three conditions
\begin{equation*}
\re \om_1  = \re \om_0, \ \ \
|\om_1 - \om_0 | < \ep , \ \ \
\| B_1 - B_0 \|_1  <  \ep .
\end{equation*}
\end{defn}

\emph{Local minimizers of $\im \om$} for a particular frequency are defined in a similar way.
A pair $(\om,B) $ is a \emph{local extremizer} if it is a local minimizer  or a local maximizer.

An admissible frequency $\alpha_0$ is \emph{a local minimizer for} $\beta_{\min}$ if there exists
$\ep >0$ such that $\beta_{\min} (\alpha_0) \le \beta_{\min} (\alpha) $ for all admissible
$\alpha $ in $ (\alpha_0 - \ep, \alpha_0 + \ep)$.
If for certain $\ep>0$ and all admissible $\alpha$ from a punctured
neighborhood $ (\alpha_0 - \ep, \alpha_0) \cup (\alpha_0, \alpha_0 + \ep) $ the strict inequality $\beta_{\min} (\alpha_0) < \beta_{\min} (\alpha) $ holds, $\alpha_0$ is said to be \emph{a strict local minimizer}.

Usually, in applied and numerical literature, local minimizers of $\im \om$ are considered without frequency restrictions. It seems that such approach goes back to 
the definition of \cite{HS86}. Below we give a rigorous adaptation of that definition suitable for the optimization over $\A$.

Let us define on the set of admissible pairs $(\om,B)$ the decay rate functional $\Dr (\om,B) := - \im \om$.

\begin{defn} An admissible pair $(\om_0,B_0)$ is said to be a \emph{local minimizer for} $\Dr$ if
there exist $\ep>0$ such that $\Dr (\om_0,B_0) \le \Dr (\om_1,B_1)$ for every admissible pair $(\om_1,B_1)$ 
with $|\om_1 - \om_0| < \ep$  and $\| B_1 - B_0 \|_1 < \ep$.
Local maximizers for $\Dr$ are defined similarly.
\end{defn}

\begin{rem} \label{r loc opt}
Clearly, if $(\om_0,B_0)$ is a local minimizer (maximizer) for $\Dr$, 
then  $(\om_0,B_0)$ is a local maximizer (resp., minimizer) of $\im \om$ for the frequency $\re \om_0$.
\end{rem}

\begin{rem} \label{r drawing}
Figure \ref{f drawing} is drawn to illustrate the above definitions of various optimizers. 
It also provide a rough impression about a part of $\Si [\A]$ 
in $\D$
 for 
the one-side open case $\nu_1 = 1$, $\nu_2 =+\infty$ (cf. Example \ref{ex 1-side D}).
 The drawing is partially based on the MATLAB computations of the nonlinear spectrum $\Si^\nl$
 (see Section \ref{s NumEx} and Figure \ref{f Snl}) and partially on Proposition \ref{p Sclosed}, Theorems \ref{t small diel contrast}, \ref{t sm contr gen}, \ref{t nl ep}, and formula 
(\ref{e b min =}).

In agreement with Theorem \ref{t sm contr gen}, 
$\Si [\A] \cap \D $ consists of four `clouds'  $\La_1$, \dots, $\La_4$. These `clouds' are connected components of $\Si [\A]$ produced by all possible perturbations (inside $\A$) of quasi-eigenvalues $\om_{0,1} = \om_0 (b_2)$, $\om_{0,2} = \om_1 (b_2)$, \dots, $\om_{0,4} = \om_3 (b_2)$, resp., which are generated by the homogeneous structure $B_0  \equiv b_2$, see (\ref{e om n hom}) and Section \ref{s small cont}. 

The projections of these components to the real line $\re \La_j$  are subsets of the set of admissible frequencies $\re \Si [\A]$. However, 
we \emph{do not know whether $\bigcup_{j=1}^4 \re \La_j $ are all admissible frequencies in the interval $(0, 1.2)$}. Indeed, it is not known if there exist 
any admissible quasi-eigenvalues $\om$ with $\re \om \in (0, 1.2)$ lying outside of $\D$. 
One of the difficulties in the study of $\om \in \Si [\A]$ with large $|\om|$ is that, globally in $\CC$, $\Si (B)$ is not a continuous set-valued function  of $B$, see Remark \ref{r glob discont}. 

In these setting, the frequency $0$ is not admissible, see Remark \ref{r const b12 0 is not admis}.
\end{rem}

\subsection{Admissible frequencies and examples of admissible families}
\label{ss ex A}

Recall that the notation $ \lceil x \rceil $ ($\lfloor x \rfloor$) stands for the ceiling (floor) function.

In this subsection we consider several most reasonable and popular admissible families for 
which the choices of the constraints has a Physics motivation. 
For all this families we provide a quantitative version of the statement that 
\emph{high enough frequencies are admissible}. These estimates on the ranges of admissible 
frequencies are quite technical. They are derived in Appendix \ref{a adm fr} 
together with the proof that high enough frequencies are admissible in more general situation, 
which is described Proposition \ref{p adm fr} (iii). This proposition also considers other properties of the set 
$\re \Si [\A]$ of admissible frequencies and, roughly speaking, shows that \emph{the set  $\re \Si [\A]$ 
cannot have some 'wild'  structure}.

\begin{ex} \label{ex 2-side}
Let the constraint functions $b_{1,2}$ be constant and $0<b_1 < b_2$. Assume that $\nu_1 =
 \nu_2 =:\nu > 0$ and $\nu^2$ equals either $b_1$, or $b_2$. These settings represent the admissible family associated with two side open optical cavities of length $a_2-a_1$ consisting of transparent dielectrics with permittivities in the range from $b_1$ to $b_2$ 
\cite{SSH09}. 
At the endpoints $a_{1,2}$, the cavity is connected to half-infinite homogeneous outer media consisting of the dielectric material with permittivity $\nu^2$.
In Optical Engineering modeling, the outer medium is 
usually the same as one of  dielectrics with extreme allowed permittivities 
(usually, either  $b_1 = \nu^2 =1$ represents vacuum, or $b_2=\nu^2 $ corresponds to a cavity connected to a waveguide of permittivity $b_2$ \cite{BPSchZsch11}). Under this choice  of parameters, formulae
(\ref{e re Si A out}) and (\ref{e re Si A in and out =}) of Appendix \ref{a adm fr} show that 
\[
\text{all frequencies in the range }
| \alpha | > 
\frac{\pi }{(a_2-a_1) b_2^{1/2} } 
\left\lceil  \frac{ b_1^{1/2} }{ b_2^{1/2} -b_1^{1/2} } \right\rceil
\text{ are admissible.}
\]
\end{ex}

\begin{ex} \label{ex 1-side D}
Let the constraints $b_{1,2}$ be constants and $0<  b_1 < b_2$. Assume that $\nu_1^2 = b_1$ and $ \nu_2 = \infty$.
One of Optics interpretations of these settings is one-side open cavity, which,
in particular, is used for the laser modeling \cite{U75}.
 The Dirichlet boundary condition $y(a_2) = 0$ generated by the parameter $ \nu_2 = \infty$ corresponds to a perfect mirror (a plate of a perfect electric conductor perpendicular to the x-axis). So EM waves are reflected without dissipation from the interface plane $x=a_2$. Through 
the interface plane $x=a_1$ the waves radiate into homogeneous half-infinite medium with permittivity $\nu_1^2$. It was proved in \cite{Ka13_Opt} that 
\begin{equation} \label{e adm a met}
\text{all frequencies in the range }
| \alpha | \ge \frac{\pi }{(a_2-a_1) b_2^{1/2} } 
\left(\frac12 + \left\lceil  \frac{ b_1^{1/2} }{ b_2^{1/2} -b_1^{1/2} }  - \frac12 \right\rceil \right)
\text{ are admissible.}
\end{equation}
\end{ex}

\begin{ex} \label{ex 1-side N}
In Mechanics models involving the equation for transverse oscillations of a nonhomogeneous string, the linear density $B(x)$ of the string on some intervals may be much less than on others. In such cases,
the massless string approximation is employed. This means that $B(x)$ 
(and so the lower constraint $b_1(x)$) can be equal to $0$ on certain intervals. 
Keeping this in mind, consider constant constraints $b_{1,2} \in [0,+\infty)$ satisfying 
$0\le b_1 < b_2$. Assume that $\nu_1 = 0$ and $ 0< \nu_2 < +\infty$. 
The choice $\nu_1= 0 $ leads to the Neumann boundary condition $y'(a_1) = 0$ and 
corresponds to the assumption that the left end of the string is "free" 
(i.e., a massless ring at the end $a_1$ of the string is sliding without friction on a pole pointing 
in the transverse direction). 
The Mechanics interpretation of the condition (\ref{e bc a+}) at $a_2$ is 
the linear damping condition (which arises, for instance, if a massless plate immersed 
in a fluid is connected to the right end $a_2$). In such settings, $\nu_2 $ is 
the damping coefficient (if the units are chosen such that the tension of the string equals $1$). 
In this context, $\nu_2$ has no natural connection with extreme allowed densities of the string. 
The estimates of Appendix \ref{a adm fr} shows that for this model
\[
\text{all frequencies in the range $ \displaystyle | \alpha | \ge \frac{3\pi }{2(a_2-a_1) ( b_2^{1/2} -b_1^{1/2} ) } $  
are admissible.}
\]
\end{ex}

Another interpretation of the Neumann condition $y'(a_1) = 0$, which makes also sense from Optics point of view, is given in the next example.

\begin{ex} \label{ex sym}
Let us assume additionally to the settings of Example \ref{ex 2-side} that $a_2=-a_1$ and let us restrict optimization to structures symmetric w.r.t. origin, i.e., to $B$ such that $B(x)=B(-x)$ and $b_1 \le B \le b_2 $ a.e.. This model of a symmetric resonator is used often to reduce computational costs, see e.g. \cite{KS08}.
Since $B$ is an even function and $\nu_1 = \nu_2$, it is easy to see that modes $y$ of such a resonator are either even, or odd, and so either condition $y'(0) = 0$, or the condition $y(0) = 0$ is fulfilled for them. Thus, the quasi-eigenvalue problem and the related optimization problem can be essentially reduced to two: the problem considered in Example \ref{ex 1-side N} and a problem similar to that of Example \ref{ex 1-side D} (possibly, with different $\nu_1$). 
\end{ex}


Denote $b_1^{\max} := \esssup b_1 (x) $ and $b_2^{\min}:=\essinf b_2 (x) $.
Recall that intervals are, by definition, connected subsets of $\RR$ and that an interval called \emph{degenerate} if it consists of one point.

\begin{prop}\label{p adm fr}
$(i)$ If $I$ is a connected component of $\re \Si [\A]$, then either $I$ is a nondegenerate interval, or $I = \{0\}$. In particular, the only possible isolated admissible frequency is $0$.

\item[(ii)] Suppose $b_1^{\max} \le b_2^{\min} $. If $b_1^{\max} < \nu_1$ or $b_2^{\min} > \nu_2$, then $0$ is an admissible frequency.

\item[(iii)]
Suppose $b_1^{\max} < b_2^{\min} $. Then high enough frequencies are admissible.
Moreover,
\begin{equation} \label{e re Si superset}
\re \Si [\A] \supset \left\{ | \alpha | \ge  
\frac{2 \pi } {(a_2-a_1) ( \sqrt{b_2^{\min}}-\sqrt{b_1^{\max}}) } \right\}
\end{equation}
\end{prop}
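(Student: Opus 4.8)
The plan is to obtain parts (ii) and (iii) from explicit homogeneous structures together with Proposition~\ref{p const struct}, and part (i) from the resonance‑perturbation machinery of Section~\ref{ss Pert}. Two remarks will be used throughout: since $b_1^{\max}\le b_2^{\min}$, every constant $b\in[b_1^{\max},b_2^{\min}]$ is an admissible structure (because $b_1(x)\le b_1^{\max}\le b\le b_2^{\min}\le b_2(x)$ a.e.), so $\Si(b)\subset\Si[\A]$; and $\re\Si[\A]$ is symmetric about $0$ because each $\Si(B)$ is symmetric w.r.t.\ $\ii\RR$ (Lemma~\ref{l prop Si}(ii)), so in (iii) it suffices to treat $\alpha>0$, and in (i) any nonzero admissible frequency may be taken positive. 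For (ii) I would pick a constant $b\in[b_1^{\max},b_2^{\min}]$ with $b^{1/2}\notin[\nu_1,\nu_2]$ (possible by the hypothesis); if $b>0$, Proposition~\ref{p const struct}(ii) and~(\ref{e om n hom}) give the quasi‑eigenvalue $\om_0(b)=-\ii\,[2b^{1/2}(a_2-a_1)]^{-1}\ln\bigl|\tfrac{1+K_1}{1-K_1}\bigr|\in\Si(b)$, which is purely imaginary and, since $K_1\in(0,1)$ by~(\ref{e sign of 1-K1}), nonzero; in the only remaining case $b_1^{\max}=b_2^{\min}=0$ the hypothesis forces $\nu_1>0$ and Proposition~\ref{p const struct}(iii) gives the purely imaginary $\om=-\ii(1/\nu_1+1/\nu_2)/(a_2-a_1)\in\Si(0)$. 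Either way $0\in\re\Si[\A]$.

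For (iii), fix $\alpha>0$, write $L=a_2-a_1$ and $\ell=\sqrt{b_2^{\min}}-\sqrt{b_1^{\max}}>0$, and let $J=(\sqrt{b_1^{\max}},\sqrt{b_2^{\min}})$. Deleting from $J$ the at most two points $\nu_1,\nu_2$ produces consecutive open intervals $J_1,\dots,J_r$ ($r\le 3$), on each of which the position of $s:=b^{1/2}$ relative to $[\nu_1,\nu_2]$ is constant; hence by~(\ref{e om n hom}) there is $\epsilon_i\in\{0,\tfrac12\}$ with $\re\om_n(b)=\pi(n+\epsilon_i)/(sL)$ for every $n\in\ZZ$ and $s\in J_i$ (all the $\om_n(b)$ being defined there, as $s\ne\nu_{1,2}$). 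The strictly increasing continuous map $h(s):=\alpha sL/\pi$ sends $J_i$ onto an open interval $S_i$ of length $\alpha L|J_i|/\pi$, and by the intermediate value theorem $\alpha\in\re\Si[\A]$ as soon as some $S_i$ meets $\ZZ+\epsilon_i$. If no $S_i$ does, then each $S_i$ is trapped in an open interval of length $\le1$ between consecutive points of $\ZZ+\epsilon_i$ --- $S_i\subset(k_i,k_i+1)$ when $\epsilon_i=0$, $S_i\subset(k_i-\tfrac12,k_i+\tfrac12)$ when $\epsilon_i=\tfrac12$, with $k_i\in\ZZ$; matching the (at most two) shared endpoints $h(\nu_j)=\sup S_j=\inf S_{j+1}$ of the consecutive $S_i$'s and using $k_i\in\ZZ$ then forces $k_r\le k_1+1$, so the whole length $\alpha L\ell/\pi=\sup S_r-\inf S_1$ is strictly below $(k_r+1)-k_1\le2$, i.e.\ $\alpha<2\pi/(L\ell)$. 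The contrapositive (the boundary value $\alpha=2\pi/(L\ell)$ being covered since we prove the strict inequality), together with symmetry, is exactly~(\ref{e re Si superset}).

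For (i): a connected subset of $\RR$ is an interval, so each connected component of $\re\Si[\A]$ is an interval and it only remains to show that no nonzero admissible frequency is isolated. Assume $\alpha_0>0$, let $\om_0=\alpha_0-\ii\beta_0\in\Si(B_0)$ with $B_0\in\A$ of multiplicity $m$ (here $\beta_0>0$ by Lemma~\ref{l prop Si}(i)), and put $\theta_0=\theta(\cdot,\om_0;B_0)$. First I would produce $V\in\A-B_0$ with $K(\om_0,B_0;V)\ne0$: by~(\ref{e paBD F}), and since $\pa_x\theta_0(a_2)\ne0$ (otherwise $\theta_0\equiv0$), this reduces to $\int_{a_1}^{a_2}\theta_0^2V\ne0$, and a small one‑sided bump $V=\pm\chi_{(x_1-\ep,x_1+\ep)}[\,\cdot\,]_+$ at a Lebesgue density point $x_1\ne x_0$ of $E_s^+:=\supp[b_2-B_0]_+$ or of $E_s^-:=\supp[B_0-b_1]_+$ does the job --- at least one of $E_s^\pm$ has positive measure, and $\theta_0^2(x_1)\ne0$ by Lemma~\ref{l turn int}(iii) --- exactly as in the proof of Theorem~\ref{t char Si nl}. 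Then Proposition~\ref{p per k mult} gives that the quasi‑eigenvalues of $B_0+\zeta V\in\A$ near $\om_0$ are $\Om_j(\zeta)=\om_0+(K\zeta)^{1/m}+o(\zeta^{1/m})$, $j=1,\dots,m$, so $\re\Om_j(\zeta)=\alpha_0+\zeta^{1/m}\re\bigl(|K|^{1/m}e^{\ii(\arg K+2\pi j)/m}\bigr)+o(\zeta^{1/m})$; unless every one of the $m$ equally spaced directions $e^{\ii(\arg K+2\pi j)/m}$ is purely imaginary --- which forces $m\le2$ and one exceptional value of $\arg K$ --- some $\re\Om_j(\zeta)\ne\alpha_0$ for small $\zeta>0$, i.e.\ admissible frequencies accumulate at $\alpha_0$.

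The hard part is this exceptional case in (i). One must track, via the rotational monotonicity of $\arg_*\theta_0$ (Lemma~\ref{l turn int}(iv)--(v)), the set of directions $\arg K(\om_0,B_0;V)$ attainable over $V\in\A-B_0$ --- whose closure contains the rays through $\theta_0^2(x_1)$ for $x_1\in E_s^+$ and through $-\theta_0^2(x_1)$ for $x_1\in E_s^-$ --- and show that it is not contained in $\ii\RR$ except in the single degenerate configuration in which $B_0\equiv b_1$, $b_1=0$ a.e.\ on a turning interval $[x_*,x^*]$ that (up to measure zero) contains the set $E$, and $\theta_0^2$ is constant there, so that every admissible perturbation moves $\om_0$ along one purely vertical ray. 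That remaining configuration has to be excluded by a separate argument --- a second‑order perturbation computation, or first replacing $B_0$ by a non‑degenerate admissible structure carrying a quasi‑eigenvalue with the same real part. (In (iii), by contrast, the only delicate point is the endpoint bookkeeping above, where the integrality of the $k_i$ is precisely what improves the naive bound $3\pi/(L\ell)$, coming from ``three pieces of combined length $\ell$'', to the stated $2\pi/(L\ell)$.)
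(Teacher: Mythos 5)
Parts (ii) and (iii) of your proposal are correct. For (ii) you argue exactly as the paper does (homogeneous admissible structures plus Proposition \ref{p const struct}). For (iii) your route is genuinely different from the paper's: the paper (Appendix \ref{a adm fr}) runs a case-by-case analysis over the mutual position of $\nu_1,\nu_2$ and $[\sqrt{b_1^{\max}},\sqrt{b_2^{\min}}]$, producing the separate thresholds (\ref{e re Si A out})--(\ref{e reSiA in b1=0}) and then checking each is below $2\pi/\bigl((a_2-a_1)(\sqrt{b_2^{\min}}-\sqrt{b_1^{\max}})\bigr)$, whereas you give one unified counting argument (split the interval of admissible constant $\sqrt{b}$ at $\nu_1,\nu_2$, rescale, and use integrality of the trapping lattices to improve the naive bound $3$ to $2$). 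This is sound and arguably cleaner; the only caveat is that your displayed bookkeeping ($k_r\le k_1+1$ and length $<(k_r+1)-k_1$) is literally valid only when $\epsilon_1=0$. It is tidier to work with the lattice points $c_i\in\ZZ+\epsilon_i$ with $S_i\subset(c_i,c_i+1)$: the shared endpoints give $c_{i+1}\le c_i+1$, and the half-integer shifts force $c_r\le c_1+1$ in every configuration (including $(1/2,0)$ and $\nu_1=\nu_2$), whence $\sup S_r-\inf S_1<(c_r+1)-c_1\le2$.

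The genuine gap is in (i), and you name it yourself: the exceptional case in which all attainable first-order directions $K(\om_0,B_0;V)$, $V\in\A-B_0$, are purely imaginary (or, for $m=2$, negative real) is left to ``a separate argument'' that is never given, and your attempted classification of that configuration is not carried out either. The paper closes exactly this loophole, and does so without classifying anything: take any nontrivial $V\in\A-B_0$ and a continuous branch $\om(\zeta)\in\Si(B_0+\zeta V)$ from Proposition \ref{p per k mult}(i). If $\re\om(\zeta)$ is not constant, the continuity of the branch already yields a nondegenerate subinterval of $\re\Si[\A]$ through $\alpha_0$ (note that what you conclude in the generic case, accumulation of admissible frequencies at $\alpha_0$, proves only the ``in particular'' clause and does not by itself exclude that the connected component of $\alpha_0$ is $\{\alpha_0\}$; the continuous-branch image fixes this at no cost). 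If instead $\re\om(\zeta)\equiv\alpha_0$, then for small $\zeta_1\in(0,1)$ the structure $B_1=B_0+\zeta_1V$ is not an extreme point of $\A$ yet still carries a quasi-eigenvalue with real part $\alpha_0\neq0$; by Theorem \ref{t char Si nl} (any $B$ realizing a non-full cone must have the bang-bang form (\ref{e B0=b1b2}); equivalently Proposition \ref{p int points}) one gets $\cone K(\om(\zeta_1),B_1;\A-B_1)=\CC$, and Proposition \ref{p per k mult} then moves the real part from $B_1$. This is precisely the second of the two fixes you gesture at (``first replacing $B_0$ by a non-degenerate admissible structure''), but it is the heart of the paper's proof of (i) and must actually be executed; as written, your part (i) is incomplete.
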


\begin{proof} 
\textbf{(i)}
It is enough to show that if $\om_0 \in \Si (B_0)$, $B_0 \in \A$, and $\re \om_0 \neq 0$, then
$B_0$ can be piecewise linearly perturbed in $\A$ such that perturbations of $\om_0$ 
do not preserve $\re \om_0$. 

Taking nontrivial $V \in \A - B_0$ consider perturbations $B_0 + \zeta V$ with sufficiently small $\zeta>0$. 
If for perturbed quasi-eigenvalues $\om (\zeta)$ the real part is preserved, we fix a certain branch $\om (\zeta)$ and certain sufficiently 
small $\zeta_1 \in (0,1)$ such that $B_1 = B_0 +\zeta_1 V$ is not an extreme point of $\A$. 
By Theorem \ref{t char Si nl} and the remarks before it, $\cone K(\om(\zeta_1), B_1; \A - B_1) = \CC$. By Proposition \ref{p per k mult}, 
$\om (\zeta_1)$ can be perturbed with change of the real part (see also Proposition \ref{p int points}).

\textbf{(ii)} follows from Proposition \ref{p const struct} (ii).

\textbf{(iii)} For the settings of Example \ref{ex 1-side D}, 
(\ref{e re Si superset})  follows easily from formula (\ref{e adm a met}), which was proved in \cite{Ka13_Opt} in slightly different form. The proof in general settings can be obtained using the same idea, but requires more technical considerations and involves a number of different cases of mutual arrangements of $\nu_{1,2}$, $b_{1}^{\max}$, and  $b_2^{\min}$. This proof is given in Appendix \ref{a adm fr}.
\end{proof}

\subsection{Existence of optimizers  and properties of $\Si [\A]$}
\label{s exist mindec}

\begin{prop} \label{p Sclosed}
The set $\Si [\A] $ is closed.
\end{prop}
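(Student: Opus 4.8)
The plan is to show that $\Si[\A]$ is closed by a compactness argument. Suppose $\om_k \in \Si[\A]$ and $\om_k \to \om_* \in \CC$; we must show $\om_* \in \Si[\A]$. First, I would dispose of the trivial point $\om_* = 0$: since $0 \notin \Si(B)$ for any $B$ (as noted after Lemma \ref{l int}), and since $\Si(B)$ consists of isolated points accumulating only at $\infty$, one has to be slightly careful, but in fact the limit point $0$ can still be ruled out — alternatively, one treats $\om_* \ne 0$ as the main case and handles $\om_* = 0$ separately using the explicit lower bounds available (e.g. $\Si(B) \subset \CC_-$ with a quantitative gap near the origin coming from the boundary conditions, or simply from the structure of $F$). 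So assume $\om_* \ne 0$.

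For each $k$ pick $B_k \in \A$ with $\om_k \in \Si(B_k)$, i.e.\ $F(\om_k; B_k) = 0$. The key observation is that $\A$ is bounded in $L^1_\CC(a_1,a_2)$: indeed $0 \le b_1 \le B \le b_2$ forces $\|B\|_1 \le \|b_2\|_1 =: R$ for all $B \in \A$. Hence $\A \subset \BB_R$, and on the closed ball $\BB_R$ the metric $\rho$ induces the weak* topology, which is compact (the ball in $\Mes$ is weak*-compact by Banach--Alaoglu, and $\A$ is a weak*-closed subset: the inequalities $b_1 \le B \le b_2$ are preserved under weak* limits of measures that remain absolutely continuous with the stated bounds — indeed a weak* limit of densities bounded between $b_1$ and $b_2$ is again a density between $b_1$ and $b_2$, which one checks by testing against nonnegative continuous functions). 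Therefore, passing to a subsequence, $\rho(B_k, B_*) \to 0$ for some $B_* \in \A$.

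Now I invoke the joint continuity of $F$. By Lemma \ref{l an}(i), $F$ is analytic (hence continuous) in $z$; by Lemma \ref{l weak cont}(i), $F(z;\cdot)$ is continuous on $(\BB_R,\rho)$ for each fixed $z$. To pass to the limit in $F(\om_k;B_k)=0$ I need joint continuity, which follows by combining the two: write
\[
|F(\om_k;B_k) - F(\om_*;B_*)| \le |F(\om_k;B_k) - F(\om_*;B_k)| + |F(\om_*;B_k) - F(\om_*;B_*)| .
\]
The second term tends to $0$ by Lemma \ref{l weak cont}(i). For the first term, I would use Lemma \ref{l weak cont}(ii): $\theta(\cdot,z;B)$ depends on $(z,B)$ in a way that is locally uniformly continuous in $z$ — more precisely, from the integral equation (\ref{e int ep th}) and the bounded-to-bounded property in Lemma \ref{l an}(ii), the family $\{\theta(\cdot,\cdot;B_k)\}$ is equicontinuous in $z$ on a fixed compact neighborhood of $\om_*$, uniformly in $k$ (the $L^1$-norms $\|B_k\|_1 \le R$ are uniformly bounded, and the Grönwall-type estimates underlying Lemma \ref{l an} depend only on $R$ and the compact $z$-set). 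Hence $F(\om_k;B_k) - F(\om_*;B_k) \to 0$ as well. Together these give $F(\om_*;B_*) = 0$, i.e.\ $\om_* \in \Si(B_*) \subset \Si[\A]$.

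The main obstacle is the joint-continuity / equicontinuity-in-$z$ step: Lemma \ref{l weak cont} as stated gives continuity of $F(z;\cdot)$ only for each fixed $z$, so one must upgrade to a statement that is uniform over $B \in \BB_R$ and over $z$ in a compact set. This is where the quantitative content of Lemma \ref{l an}(ii) (bounded-to-bounded, with constants depending only on $R$ and the $z$-range) is essential — it ensures the solutions $\theta(\cdot,z;B_k)$ stay in a fixed bounded subset of $W^{2,1}$ and depend on $z$ with a modulus of continuity independent of $k$. An alternative route that sidesteps this is to apply Proposition \ref{p weak cont mult} directly: take a small disc $\D = \DD_\ep(\om_*)$; for large $k$, $\om_k \in \D$, so the total multiplicity of quasi-eigenvalues of $B_k$ in $\D$ is $\ge 1$; by the weak* continuity of this count (Proposition \ref{p weak cont mult}, applied at $B_*$ with a disc whose boundary avoids $\Si(B_*)$, shrinking $\ep$ if necessary), the total multiplicity for $B_*$ in $\D$ is also $\ge 1$, so $\Si(B_*) \cap \DD_\ep(\om_*) \ne \varnothing$ for every small $\ep$; since $\Si(B_*)$ is closed, $\om_* \in \Si(B_*)$. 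This second argument is cleaner and I would present it as the main proof.
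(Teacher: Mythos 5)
Your proposal is correct and follows essentially the same strategy as the paper's proof: sequential weak* compactness of $\A$ in the metric $\rho$ combined with continuity/stability of the zeros of $F(\cdot;B)$ to pass to the limit. The differences are only in packaging — the paper obtains compactness by writing $B=b_1+(b_2-b_1)H$ with $H$ in the unit ball of $L^\infty$ and applying the sequential Banach--Alaoglu theorem there (your domination argument showing that a weak* limit of densities between $b_1$ and $b_2$ is again such a density carries the same content), and it finishes with the weak continuity of $F$ from Lemma \ref{l weak cont}(i), whereas your preferred ending invokes the Rouch\'e-type Proposition \ref{p weak cont mult}, which works equally well and also quietly disposes of the point $\om_*=0$ because $F(0;B)=1+\nu_1/\nu_2>0$ for every $B$.
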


\begin{proof}
Since $\A \subset L^1$, the proof requires some modifications of standard compactness arguments. 
For each $B \in \A$ there exists unique $H (x) = H (x;B) $ such that
\begin{gather*}
B (x) = (b_2 (x) -b_1 (x)) H (x) + b_1 (x) \text{ a.e. on } (a_1,a_2) \label{e H}
\end{gather*}
and $H (x)$ belongs to the set $\A_* $ of $L^\infty$-functions satisfying
\begin{gather*}
\text{$0 \le H(x) \le 1$ a.e. on $(a_1,a_2)$ and $H(x) = 0$ a.e. on $(a_1,a_2) \setminus E$.}
\end{gather*}

Let us take $R>\|b_2\|_1$. One can see that
\begin{multline} \label{e H cont}
\text{the map $H (\cdot; B) \mapsto B (\cdot)$ is a bijection of $\A_*$ to $\A$ continuous } \\ \text{
from the weak$^*$ topology of $L^\infty$ to the metric topology of $(\BB_R,\rho)$}
\end{multline}
(see Subsection \ref{ss weak cont}, the continuity is sequential and w.r.t. the corresponding induced topologies on $\A_*$ and $\A$).
Lemma \ref{l weak cont} (i) and the sequential Banach-Alaoglu theorem applied to $\A_*$ complete the proof.
\end{proof}

Proposition \ref{p Sclosed} and $\Si [\A] \subset \CC_-$ imply three following results.

\begin{cor} \label{c exist}
(i) For each admissible frequency $\alpha$, there exist a structure $B$ and the quasi-eigenvalue $\om$ of minimal decay, i.e., 
$\om = \alpha - \ii \beta_{\min} (\alpha) $ belongs to $\Si (B)$ for certain $B \in \A$.

\item[(ii)] The pair  $(\om , B )$ from statement (i) is a local maximizer of $\im \om$ for the frequency $\alpha$.
\end{cor}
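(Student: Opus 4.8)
The plan is to derive both statements directly from Proposition \ref{p Sclosed} together with the inclusion $\Si[\A] \subset \CC_-$, which is the instruction given right before the corollary. For statement (i), let $\alpha$ be an admissible frequency. By definition of $\beta_{\min}(\alpha)$ there is a sequence $\om_n = \alpha - \ii \beta_n \in \Si[\A]$ with $\beta_n \to \beta_{\min}(\alpha)$ as $n \to \infty$; each $\om_n$ lies in $\Si(B_n)$ for some $B_n \in \A$. Since $\om := \alpha - \ii\beta_{\min}(\alpha)$ is the limit of the $\om_n$ and $\Si[\A]$ is closed by Proposition \ref{p Sclosed}, we get $\om \in \Si[\A]$, i.e.\ $\om \in \Si(B)$ for some $B \in \A$. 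The only point needing a word of care is that the infimum defining $\beta_{\min}(\alpha)$ is over $\beta \in \RR$ with $\alpha - \ii\beta \in \Si[\A]$, and that this infimum is finite: finiteness of $\inf$ from below is guaranteed because $\Si[\A] \subset \CC_-$ forces $\beta_n > 0$, so $\beta_{\min}(\alpha) \ge 0$; and the set is nonempty precisely because $\alpha$ is admissible. Hence $\om$ is the quasi-eigenvalue of minimal decay for $\alpha$ and $B$ is an admissible structure realizing it.

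For statement (ii), I would argue that $(\om, B)$ with $\om = \alpha - \ii\beta_{\min}(\alpha)$ is automatically a local maximizer of $\im\om$ for the frequency $\alpha$, in fact a global one among admissible pairs of that frequency. Indeed, suppose $(\om_1, B_1)$ is any admissible pair with $\re\om_1 = \alpha$. Then $\om_1 \in \Si[\A]$, so $\om_1 = \alpha - \ii\beta_1$ with $\alpha - \ii\beta_1 \in \Si[\A]$, whence $\beta_1 \ge \beta_{\min}(\alpha)$ by definition of the infimum; equivalently $\im\om_1 = -\beta_1 \le -\beta_{\min}(\alpha) = \im\om$. Thus $\im\om_1 \le \im\om$ for \emph{every} admissible pair with the same frequency, and in particular for those additionally satisfying $|\om_1 - \om| < \ep$ and $\|B_1 - B\|_1 < \ep$ for any $\ep > 0$. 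This is exactly the definition of a local maximizer of $\im\om$ for the frequency $\alpha$ from Subsection \ref{ss def optim}.

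The argument is short and the only genuine obstacle is the first one: showing $\om \in \Si[\A]$, i.e.\ that the infimum $\beta_{\min}(\alpha)$ is actually attained. This is precisely the content of Proposition \ref{p Sclosed} (closedness of $\Si[\A]$), whose proof is the nontrivial compactness step — it relies on rewriting $B = (b_2 - b_1)H + b_1$ with $H$ in the weak$^*$-compact set $\A_*$, on the weak$^*$-continuity of $F(z;\cdot)$ from Lemma \ref{l weak cont}, and on the sequential Banach--Alaoglu theorem. Granting that proposition, everything else in the corollary is a one-line consequence of the definitions, with the minor bookkeeping points being that $\Si[\A]\subset\CC_-$ keeps $\beta_{\min}(\alpha)$ finite and nonnegative and that the extremality in (ii) is really global in frequency, hence a fortiori local.
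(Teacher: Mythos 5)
Your proposal is correct and follows essentially the same route as the paper, which derives the corollary directly from Proposition \ref{p Sclosed} (closedness of $\Si[\A]$) together with $\Si[\A]\subset\CC_-$: a minimizing sequence at fixed frequency converges to $\alpha-\ii\beta_{\min}(\alpha)\in\Si[\A]$, and the definition of $\beta_{\min}$ makes the resulting pair a (global, hence local) maximizer of $\im\om$ for that frequency. Your bookkeeping on nonemptiness and boundedness below of the infimum is exactly the content implicitly used there.
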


\begin{prop} \label{p lower semicont}
The function $\beta_{\min}$ is lower semicontinuous, i.e., if $\{\alpha_n \}_0^{\infty} \subset \re \Si [\A]$ and $\alpha_n \to \alpha_0$, then $\beta_{\min} (\alpha_0) \le
\lim \inf \beta_{\min} (\alpha_n)$.
\end{prop}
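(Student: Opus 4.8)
The plan is to argue by contradiction using the closedness of $\Si [\A]$ established in Proposition \ref{p Sclosed}. Suppose $\{\alpha_n\}_0^\infty \subset \re \Si [\A]$ with $\alpha_n \to \alpha_0$, and suppose toward a contradiction that $\beta_{\min}(\alpha_0) > L := \liminf_n \beta_{\min}(\alpha_n)$. Passing to a subsequence (not relabeled), I may assume $\beta_{\min}(\alpha_n) \to L$. By Corollary \ref{c exist} (i), for each $n$ there is a structure $B_n \in \A$ with the quasi-eigenvalue of minimal decay $\om_n := \alpha_n - \ii \beta_{\min}(\alpha_n) \in \Si (B_n)$, so $\om_n \in \Si [\A]$ for every $n$. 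Since $\alpha_n \to \alpha_0$ and $\beta_{\min}(\alpha_n) \to L$, we have $\om_n \to \om_0 := \alpha_0 - \ii L$ in $\CC$. By Proposition \ref{p Sclosed}, $\Si [\A]$ is closed, hence $\om_0 \in \Si [\A]$; in particular $\re \om_0 = \alpha_0$ is an admissible frequency and, by the definition of $\beta_{\min}$, $\beta_{\min}(\alpha_0) \le \im$-part data, i.e. $\beta_{\min}(\alpha_0) \le L$. This contradicts $\beta_{\min}(\alpha_0) > L$, completing the proof.

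One point that needs a small amount of care is why $\om_0 \in \Si [\A]$ actually forces $\beta_{\min}(\alpha_0) \le L$: by definition $\beta_{\min}(\alpha_0) = \inf\{\beta : \alpha_0 - \ii\beta \in \Si [\A]\}$, and $\alpha_0 - \ii L \in \Si [\A]$ exhibits $L$ as an element of the set over which the infimum is taken, so $\beta_{\min}(\alpha_0) \le L$ immediately. The only mild subtlety is ensuring that the sequence $\om_n$ stays in a bounded region so that the limit is genuinely a point of $\CC$ rather than escaping to $\infty$; this is automatic here because $\alpha_n \to \alpha_0$ is a convergent (hence bounded) real sequence and $\beta_{\min}(\alpha_n) \to L$ is likewise bounded, so $\{\om_n\}$ is a bounded subset of $\CC_-$, and closedness of $\Si [\A]$ then applies directly.

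I do not expect a genuine obstacle here: the statement is essentially the observation that a lower-semicontinuous-type inequality for $\beta_{\min}$ follows formally from the graph $\{(\alpha,\beta) : \alpha - \ii\beta \in \Si [\A]\}$ being closed (equivalently, $\Si [\A]$ closed) together with the attainment of the infimum provided by Corollary \ref{c exist}. If one wanted to avoid invoking attainment, an alternative would be to pick, for each $n$, a point $\om_n' \in \Si [\A]$ with $\re \om_n' = \alpha_n$ and $-\im \om_n' < \beta_{\min}(\alpha_n) + 1/n$, then extract a convergent subsequence of $\{\om_n'\}$ (bounded for the same reason) and use closedness; this yields the same conclusion and only uses Proposition \ref{p Sclosed}. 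Either route is short, so the "hard part" is merely bookkeeping the subsequence extraction and the definition-chasing for $\beta_{\min}$.
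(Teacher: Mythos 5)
Your argument is correct and is exactly the route the paper intends: the paper gives no separate proof, stating only that Proposition \ref{p lower semicont} (together with Corollary \ref{c exist} and Proposition \ref{p exist loc}) follows from the closedness of $\Si[\A]$ (Proposition \ref{p Sclosed}) and $\Si[\A]\subset\CC_-$, which is precisely the closed-graph/subsequence argument you spell out. Your extraction of a convergent subsequence of minimal-decay quasi-eigenvalues and appeal to closedness fills in the same bookkeeping the authors left implicit.
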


\begin{prop} \label{p exist loc}
If an admissible frequency $\alpha$ is a local minimizer for $\beta_{\min}$, then the pair $(\om,B)$ from Corollary \ref{c exist} is a local minimizer for $\Dr$.
\end{prop}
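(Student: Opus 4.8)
The plan is to reduce the assertion directly to the definition of $\beta_{\min}$ together with the hypothesis of local minimality, using the elementary fact that every admissible pair $(\om_1,B_1)$ satisfies $\Dr(\om_1,B_1) \ge \beta_{\min}(\re \om_1)$. First I would fix the setup provided by Corollary \ref{c exist}: the pair $(\om,B)$ has $\om = \alpha - \ii\beta_{\min}(\alpha) \in \Si(B)$ with $B \in \A$, so $\Dr(\om,B) = -\im\om = \beta_{\min}(\alpha)$. (Since $\Si[\A] \subset \CC_-$ and, by Corollary \ref{c exist}, the infimum defining $\beta_{\min}(\alpha)$ is attained inside $\Si[\A]$, one has $\beta_{\min}(\alpha) > 0$ and hence $\om \neq 0$; this only makes the pair well defined and is used nowhere else.)

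Next, let $\ep_0 > 0$ be the radius furnished by the hypothesis that $\alpha$ is a local minimizer for $\beta_{\min}$, so that $\beta_{\min}(\alpha') \ge \beta_{\min}(\alpha)$ for every admissible frequency $\alpha' \in (\alpha - \ep_0, \alpha + \ep_0)$. I would then set $\ep := \ep_0$ and take an arbitrary admissible pair $(\om_1,B_1)$ with $|\om_1 - \om| < \ep$ and $\|B_1 - B\|_1 < \ep$ (in fact the $L^1$-constraint on $B_1$ will not be used). Put $\alpha' := \re\om_1$. Because $\om_1 \in \Si(B_1)$ with $B_1 \in \A$, the number $\alpha'$ is an admissible frequency, and $|\alpha' - \alpha| \le |\om_1 - \om| < \ep_0$, so $\beta_{\min}(\alpha') \ge \beta_{\min}(\alpha)$. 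On the other hand $\om_1 = \alpha' - \ii(-\im\om_1) \in \Si[\A]$, so $-\im\om_1$ lies in the set whose infimum is $\beta_{\min}(\alpha')$; hence $\Dr(\om_1,B_1) = -\im\om_1 \ge \beta_{\min}(\alpha')$. Combining the two inequalities gives $\Dr(\om_1,B_1) \ge \beta_{\min}(\alpha) = \Dr(\om,B)$, which is precisely the definition of $(\om,B)$ being a local minimizer for $\Dr$.

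The argument is short and I do not anticipate any serious obstacle. The only points demanding attention are bookkeeping ones: that the projection $\om_1 \mapsto \re\om_1$ is $1$-Lipschitz, so that proximity of $\om_1$ to $\om$ in $\CC$ forces $\re\om_1$ into the interval where $\beta_{\min}$ is locally minimal; and the trivial but essential observation that the decay rate of any admissible pair dominates $\beta_{\min}$ evaluated at its frequency, immediate from the infimum definition of $\beta_{\min}$. As a consistency check, Remark \ref{r loc opt} applied to the conclusion recovers that $(\om,B)$ is a local maximizer of $\im\om$ for the frequency $\alpha$, in agreement with Corollary \ref{c exist}(ii).
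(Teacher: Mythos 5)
Your proof is correct, and it is essentially the argument the paper intends: the paper states Proposition \ref{p exist loc} as an immediate consequence of Corollary \ref{c exist} (itself resting on Proposition \ref{p Sclosed} and $\Si[\A]\subset\CC_-$) together with the definitions, and your unwinding — $\Dr(\om,B)=\beta_{\min}(\alpha)$, $\re\om_1$ admissible and within $\ep_0$ of $\alpha$, and $\Dr(\om_1,B_1)\ge\beta_{\min}(\re\om_1)\ge\beta_{\min}(\alpha)$ — is exactly that implicit argument made explicit.
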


The existence of local minimizers for $\beta_{\min}$ and $\Dr$ is considered in next two sections.

\section{Non-uniqueness and local minima for small $\alpha$}
\label{s 0fr}

Pure imaginary quasi-eigenvalues $\om \in \ii \RR$ are the subject of special interest in Mechanics, see 
e.g. \cite{CZ94,CO96}. They corresponds to overdamped and critically damped oscillations.

\begin{prop} \label{p loc extr al=0}
If $\re \om_0 = 0$ and $(\om_0,B_0)$ is a local extremizer of $\im \om$ for the frequency $0$, then either $B_0 \equiv b_1$, or $B_0 \equiv b_2$. In particular, if $B_0$
is a structure of minimal decay for the frequency $0$, then  either $B_0 \equiv b_1$, or $B_0 \equiv b_2$.
\end{prop}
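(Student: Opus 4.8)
The plan is to argue by contradiction. Suppose $(\om_0,B_0)$ is a local extremizer of $\im\om$ for the frequency $0$ but $B_0\not\equiv b_1$ and $B_0\not\equiv b_2$; I will produce, arbitrarily close to $(\om_0,B_0)$ in the sense of the extremizer definition, admissible pairs $(\om_1,B_1)$ with $\re\om_1=0$ and $\im\om_1>\im\om_0$ (which rules out a local maximizer) and others with $\im\om_1<\im\om_0$ (which rules out a local minimizer). Since $B_0\in\A$ is nonnegative and $\om_0\in\ii\RR\setminus\{0\}$, Lemma \ref{l prop Si}(i) gives $\om_0=\ii\beta$ with $\beta<0$; by (\ref{e theta real}) the mode $\theta(x):=\theta(x,\ii\beta;B_0)$ is real-valued, so Lemma \ref{arg th with alf=0} applies. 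The whole argument is read off from the scalar function $g(t):=F(\ii t;B_0)$, which by (\ref{e iR real}) is real-valued for real $t$; working with $g$ avoids Puiseux asymptotics and never requires $\om_0$ to be simple.

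First I would record two structural facts. (a) By Lemma \ref{l an}(i) $g$ is entire and real on $\RR$, with $g(\beta)=0$ and $g\not\equiv0$ (because $g(0)=F(0;B_0)=1+\nu_1/\nu_2>0$); hence near $\beta$ one has $g(t)=a_m(t-\beta)^m+o((t-\beta)^m)$ with $m\ge1$ and $a_m\in\RR\setminus\{0\}$, so $g$ has definite, controlled signs on each side of $\beta$ (for small $\delta>0$ the sign of $g(\beta\pm\delta)$ equals that of $(\pm1)^m a_m$). (b) By Lemma \ref{arg th with alf=0}(ii)--(iii) the real mode $\theta$ has at most one zero on $[a_1,a_2]$, so $\theta^2>0$ a.e.; therefore the admissible directions $V_+:=b_2-B_0\ge0$ and $V_-:=b_1-B_0\le0$ (both in $\A-B_0$, and, by our assumption on $B_0$, both not a.e.\ zero) satisfy $\int_{a_1}^{a_2}\theta^2 V_+\,\dd s>0$ and $\int_{a_1}^{a_2}\theta^2 V_-\,\dd s<0$. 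Moreover $\pa_x\theta(a_2,\om_0;B_0)\ne0$, since otherwise $F(\om_0;B_0)=0$ would force $\theta(a_2)=0$ as well and hence $\theta\equiv0$. Using the directional-derivative formula (\ref{e paBD F}) — valid also for $V\in L^1$, the functional $V\mapsto\frac{\om^2}{\pa_x\theta(a_2)}\int_{a_1}^{a_2}\theta^2 V\,\dd s$ being bounded on $L^1$ and agreeing with the Fr\'echet derivative on the dense subspace $L^\infty$ — I conclude that $\frac{\pa F(\om_0;B_0)}{\pa B}(V_+)$ and $\frac{\pa F(\om_0;B_0)}{\pa B}(V_-)$ are real and of opposite, nonzero sign.

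The core is a one-parameter sign-change step. For $V\in\{V_+,V_-\}$ and $\zeta\in[0,1]$, $B_0+\zeta V\in\A$ by convexity, and $g_\zeta(t):=F(\ii t;B_0+\zeta V)$ is real-analytic in $t$. By analyticity of $F$ on $\CC\times L^1$, $g_\zeta(\beta)=\zeta\,\frac{\pa F(\om_0;B_0)}{\pa B}(V)+O(\zeta^2)$, so for small $\zeta>0$ the sign of $g_\zeta(\beta)$ is that of $\frac{\pa F(\om_0;B_0)}{\pa B}(V)$; and for a fixed small $\delta\in(0,\ep)$ with $g(\beta\pm\delta)\ne0$, continuity of $F$ in $B$ gives $\sgn g_\zeta(\beta\pm\delta)=\sgn g(\beta\pm\delta)$ for $\zeta$ small enough. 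Now choose $V\in\{V_+,V_-\}$ so that $g_\zeta(\beta)$ and $g_\zeta(\beta+\delta)$ have opposite signs (possible since $\frac{\pa F(\om_0;B_0)}{\pa B}$ realizes both signs on $\{V_+,V_-\}$ and $\delta^m>0$); the intermediate value theorem gives $t_2(\zeta)\in(\beta,\beta+\delta)$ with $F(\ii t_2(\zeta);B_0+\zeta V)=0$, so $(\ii t_2(\zeta),B_0+\zeta V)$ is an admissible pair with real part $0$, imaginary part $>\beta$, lying within $\delta<\ep$ of $\om_0$ and with $\|\zeta V\|_1<\ep$ for small $\zeta$ — contradicting "local maximizer". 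Choosing instead $V$ so that $g_\zeta(\beta)$ and $g_\zeta(\beta-\delta)$ have opposite signs yields, the same way, a quasi-eigenvalue $\ii t_1(\zeta)$ with real part $0$ and imaginary part $<\beta$, contradicting "local minimizer". Hence $B_0\equiv b_1$ or $B_0\equiv b_2$. The last assertion then follows because, by Corollary \ref{c exist}(ii), a structure of minimal decay for the frequency $0$ (with the corresponding quasi-eigenvalue $-\ii\beta_{\min}(0)$) is a local maximizer of $\im\om$ for the frequency $0$, hence a local extremizer, so the part just proved applies.

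The main obstacle is not a trick but the need to treat a possibly degenerate $\om_0$ uniformly: reducing to the real-analytic scalar $g$ and the intermediate value theorem absorbs any multiplicity $m$, but one must watch out for the fact that for one of the two sign choices $g_\zeta$ may fail to change sign near $\beta$ at all (all perturbed quasi-eigenvalues leaving $\ii\RR$ as a conjugate-symmetric pair and giving no admissible pair with $\re=0$). This is precisely why it is essential that the two available admissible directions $V_+$ and $V_-$ realize \emph{both} signs of $\frac{\pa F(\om_0;B_0)}{\pa B}$ — and that is where both hypotheses $B_0\not\equiv b_1$, $B_0\not\equiv b_2$ and the positivity $\theta^2>0$ a.e.\ (Lemma \ref{arg th with alf=0}) enter the argument.
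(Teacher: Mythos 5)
Your proof is correct, and it shares its first half with the paper's own argument: both use formula (\ref{e paBD F}) together with Lemma \ref{arg th with alf=0} (reality of $\theta$ on $\ii\RR$ and the fact that $\theta^2>0$ off at most one point) to exhibit two admissible directions, essentially $b_2-B_0$ and $b_1-B_0$, along which $\frac{\pa F(\om_0;B_0)}{\pa B}$ takes nonzero real values of opposite sign; this is exactly where the hypotheses $B_0\not\equiv b_1$, $B_0\not\equiv b_2$ enter in both texts. Where you genuinely diverge is the perturbation step. The paper passes through Proposition \ref{p per k mult} (Puiseux expansions of the $m$ branches of the perturbed quasi-eigenvalue) combined with the symmetry of $\Si(B)$ with respect to $\ii\RR$, referring to \cite{Ka13} for the details of why a branch remains on the imaginary axis and moves in the prescribed vertical direction. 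You instead restrict $F$ to the imaginary axis, where (\ref{e iR real}) makes $g_\zeta(t)=F(\ii t;B_0+\zeta V)$ real-valued, expand $g_\zeta(\beta)=\zeta\,\frac{\pa F(\om_0;B_0)}{\pa B}(V)+O(\zeta^2)$, freeze the signs of $g_\zeta(\beta\pm\delta)$ by continuity in $B$, and conclude by the intermediate value theorem. This buys a more elementary and self-contained argument: arbitrary multiplicity $m$ is absorbed with no Puiseux machinery and no appeal to spectral symmetry or to the external reference, and your extra care (extending (\ref{e paBD F}) from $L^\infty$ to $L^1$ directions, which the paper uses silently for $V\in\A-B_0$, and checking $\pa_x\theta(a_2,\om_0;B_0)\neq0$) is sound. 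What the paper's route buys is uniformity with the rest of its toolkit: Proposition \ref{p per k mult} also governs perturbations off the imaginary axis and is reused elsewhere, whereas your reality-plus-IVT trick is specific to $\om_0\in\ii\RR$ (which is all that is needed here). Your justification of the final ``in particular'' clause via Corollary \ref{c exist}(ii) agrees with the paper's implicit reasoning.
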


\begin{proof}
Assume that $(\om_0,B_0)$ is a local extremizer of $\im \om$ for frequency $0$
and that $B_0 \neq b_1$, $B_0 \neq b_2$ (in the sense of $L^1$-space). Then
it follows from (\ref{e paBD F}) and Lemma \ref{arg th with alf=0},
that there exist $V_1^\pm \in \A -B_0$ such that $\pm \frac{\pa F (\om_0;B_0)}{\pa B} (V^\pm_1) >0$.
This,  Proposition \ref{p per k mult}, and the symmetry of $\Sigma(B)$ w.r.t.  $\ii \RR$
imply that
there exist $V^\pm_2 \in \A -B_0$ and $\om_\pm (\zeta) \in \Si (B_0 + \zeta V^\pm_2)$ such that for small enough
$\zeta > 0$,
\[
\re \om_\pm (\zeta)  = 0 , \ \ \ \ \pm [ \im \om_\pm (\zeta) - \im \om_0 ] > 0 , \ \ \
\text{ and } \ \om_\pm (\zeta) \to \om_0 \text{ as } \zeta \to 0
\]
(the arguments are similar to \cite[Section 4.4]{Ka13}, note that $V^+_2$ and $V^-_2$ are not necessarily different).
Thus, $(\om_0,B_0)$ is not a local extremizer.
\end{proof}

\begin{thm} \label{t al=0 beta min}
Assume that $\alpha=0$ is an admissible frequency. Then
\item[(i)] $\Setim (b_1,b_2):= \ii \RR \cap [\Si (b_1) \cup \Si (b_2)]  $ is a nonempty closed set.
\item[(ii)] $\beta_{\min} (0) = \min\limits_{\om \in \Setim (b_1,b_2)} |\im \om| $
\item[(iii)] $\alpha = 0 $ is a strict local minimizer for $\beta_{\min}$ whenever
$\om = - \ii \beta_{min} (0)$ does not belong to $\Si_\mult (b_1) \cup \Si_\mult (b_2)$
(i.e., whenever the quasi-eigenvalue of minimal decay $\om$ is not a multiple quasi-eigenvalue of $b_1$ and of $b_2$).
\end{thm}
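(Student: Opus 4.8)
The plan is to get parts (i) and (ii) quickly from the existence statement Corollary~\ref{c exist} and the rigidity statement Proposition~\ref{p loc extr al=0}, and then to prove (iii) by a compactness argument whose contradiction is produced by the Rouch\'e-type stability of total multiplicity (Proposition~\ref{p weak cont mult}) together with the $\ii\RR$-symmetry of each $\Si(B)$.

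For (i), I would first note that $\Si(b_1)\cup\Si(b_2)$ is closed, each $\Si(b_j)$ being a set of isolated points accumulating only at $\infty$ (Section~\ref{ss F}), and intersecting with the closed set $\ii\RR$ keeps it closed. For nonemptiness: since $0$ is admissible, Corollary~\ref{c exist} gives $B\in\A$ with $\om:=-\ii\beta_{\min}(0)\in\Si(B)$, and $(\om,B)$ is a local maximizer of $\im\om$ for the frequency $0$; by Proposition~\ref{p loc extr al=0} this forces $B\equiv b_1$ or $B\equiv b_2$, so $\om\in\Si(b_1)\cup\Si(b_2)$, and since $\om\in\ii\RR$ we get $\om\in\Setim(b_1,b_2)$. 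This already shows that $\beta_{\min}(0)=|\im\om|$ is one of the values $|\im\om'|$, $\om'\in\Setim(b_1,b_2)$. To finish (ii) I would check the reverse inequality: every $\om'\in\Setim(b_1,b_2)$ lies in $\Si(b_1)\cup\Si(b_2)\subset\Si[\A]$ (as $b_{1,2}\in\A$) and in $\CC_-$ (Lemma~\ref{l prop Si}(i)), so $-\ii|\im\om'|\in\Si[\A]$ and hence $|\im\om'|\ge\beta_{\min}(0)$. Combining gives $\beta_{\min}(0)=\min_{\om'\in\Setim(b_1,b_2)}|\im\om'|$, with the minimum attained.

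For (iii), I would argue by contradiction. Assuming $\om:=-\ii\beta_{\min}(0)$ is simple both for $b_1$ and for $b_2$ while $0$ is not a strict local minimizer of $\beta_{\min}$, there are admissible $\alpha_n\to0$, $\alpha_n\neq0$, with $\beta_{\min}(\alpha_n)\le\beta_{\min}(0)$; by Corollary~\ref{c exist} choose $B_n\in\A$ with $\om_n:=\alpha_n-\ii\beta_{\min}(\alpha_n)\in\Si(B_n)$. Since $\re\om_n\to0$ and $0<-\im\om_n\le\beta_{\min}(0)$, the $\om_n$ are bounded. Next I would run the weak-$*$ sequential compactness of the parameter set $\A_*$ used in the proof of Proposition~\ref{p Sclosed}, together with the joint continuity of $F$ in $(z,B)$ (Lemma~\ref{l weak cont}(i) plus the local equi-Lipschitz bounds from Lemma~\ref{l an}), to pass to a subsequence along which $\om_n\to\om_0$ and $B_n\to B_0$ in the metric $\rho$, with $B_0\in\A$ and $\om_0\in\Si(B_0)$. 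Then $\re\om_0=0$, $\om_0\in\CC_-$, and $-\im\om_0=\lim\beta_{\min}(\alpha_n)\le\beta_{\min}(0)$, so the definition of $\beta_{\min}(0)$ forces $\om_0=\om$ and makes $(\om,B_0)$ a maximizer (global, hence local) of $\im\om$ for the frequency $0$; Proposition~\ref{p loc extr al=0} then gives $B_0\equiv b_1$ or $B_0\equiv b_2$, and after one more subsequence I may assume $B_0\equiv b_2$ (the case $b_1$ being identical).

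The decisive step is then to fix $\ep>0$ with $\overline{\DD_\ep(\om)}$ free of quasi-eigenvalues of $b_2$ except $\om$ itself; by simplicity the total multiplicity of quasi-eigenvalues of $b_2$ in $\DD_\ep(\om)$ is $1$ and $\Bd\DD_\ep(\om)$ carries none, so Proposition~\ref{p weak cont mult} gives a $\rho$-neighborhood $W$ of $b_2$ in which every $B$ still has total multiplicity exactly $1$ in $\DD_\ep(\om)$. For large $n$, $B_n\in W$; but $\Si(B_n)$ is symmetric about $\ii\RR$ (Lemma~\ref{l prop Si}(ii)), so $\om_n$ and its mirror image $-\overline{\om_n}=-\alpha_n-\ii\beta_{\min}(\alpha_n)$ both belong to $\Si(B_n)$, both converge to $\om$, hence both lie in $\DD_\ep(\om)$ for large $n$, and they are distinct since $\alpha_n\neq0$ --- giving total multiplicity $\ge2$ in $\DD_\ep(\om)$, a contradiction. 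I expect this orchestration of part (iii) --- the two compactness passages, the identification of the limit as an extreme point via Proposition~\ref{p loc extr al=0}, and the observation that the reflected quasi-eigenvalue $-\overline{\om_n}$ is precisely what breaks the multiplicity-one stability --- to be the main difficulty; parts (i) and (ii) are routine once the cited results are in place.
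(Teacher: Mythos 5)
Your proof is correct and follows essentially the same route as the paper's: parts (i)--(ii) from closedness/attainment (Proposition \ref{p Sclosed}, Corollary \ref{c exist}) combined with Proposition \ref{p loc extr al=0}, and part (iii) by weak-$*$ compactness of $\A$, the $\ii\RR$-symmetry of $\Si(B_n)$ producing two distinct quasi-eigenvalues $\om_n$, $-\overline{\om_n}$ converging to $-\ii\beta_{\min}(0)$, and the Rouch\'e-type stability of total multiplicity (Proposition \ref{p weak cont mult}). The only organizational difference is that the paper runs the multiplicity argument at the level $\beta_*=\liminf_{\alpha\to0}\beta_{\min}(\alpha)$, showing the limiting structure has a multiple eigenvalue there and hence $\beta_*>\beta_{\min}(0)$, whereas you assume non-strictness directly, identify the limit pair as $(\om,b_1)$ or $(\om,b_2)$ via Proposition \ref{p loc extr al=0}, and contradict the persistence of the simple eigenvalue --- the ingredients and mechanism are identical.
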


\begin{proof}
\textbf{(i) and (ii)} follow from the assumption $0 \in \re \Si [\A]$, Proposition \ref{p loc extr al=0}, and Proposition \ref{p Sclosed}.

If $0$ is an isolated point of $\re \Si [\A]$, statement \textbf{(iii)} follows from
the definition of a strict local minimizer for $\beta_{\min}$ (see Section \ref{ss def optim}). 
Let us denote by $I$ the connected component of $\re \Si [\A]$ containing $0$, 
and let us prove \textbf{(iii)} in the case when $I \neq \{ 0 \}$.

  By Lemma \ref{l prop Si} (ii) and Proposition \ref{p adm fr} (i), $I$ contains a nonempty symmetric interval $(-\alpha_0,\alpha_0)$. Consider
$\beta_* := \liminf_{\alpha \to 0-} \beta_{\min} (\alpha)$. Then also  $\beta_* = \liminf_{\alpha \to 0+} \beta_{\min} (\alpha)$
due to Lemma \ref{l prop Si} (ii).

If $\beta_* = +\infty$, it is obvious that $\alpha=0$ is a strict minimizer. Consider the case $\beta_* < +\infty$.

 Since $\Si [\A]$ is closed,
there exist sequences $\{B_n\}_{1}^{+\infty} \subset \A$ and $\om_{\pm n} \in \Si (B_n)$ such that
\begin{eqnarray*}
& \text{$\im \om_{\pm n} \to - \beta_*$,  \ \ \ $\re \om_{\pm n} \to 0 \pm 0$, \ \ \
and }\\
& \text{ the sequence $B_n$ converges to a certain $B_* \in \A$ w.r.t. the metric $\rho$
of Subsection \ref{ss weak cont}}
\end{eqnarray*}
 (this can be proved with the use of the map $H (\cdot; B) \mapsto B (\cdot)$
from (\ref{e H cont}) and the sequential Banach-Alaoglu theorem applied to $\A_*$).

Let us show that $\om_* := - \ii \beta_*$ is a multiple quasi-eigenvalue of $B_*$.  Indeed, $\om_* \in \Si (B_*)$ due to Lemma \ref{l weak cont} (i). One can take $\ep>0$ such that $\overline{\DD_\ep (\om_*)}$ does not contain other points of $\Si (B_*)$.
For large enough $n$, the disc $\DD_\ep (\om_*)$ contains at least two distinct quasi-eigenvalues $\om_{\pm n}$ of $B_n$. Hence, it follows from Proposition \ref{p weak cont mult} that the multiplicity of $\om_*$ as a quasi-eigenvalue of $B_*$ is $\ge 2$.

Now, the assumption that $\om = -\ii \beta_{\min} (0)$ is a simple quasi-eigenvalue of any corresponding structure $B$ of minimal decay (by Proposition \ref{p loc extr al=0},
$B$ is either $b_1$, or $b_2$) yields $\om \neq \om_* $ (since they have different multiplicities). So $\beta_* > \beta_{\min} (0)$. This completes the proofs of statement (iii).
\end{proof}

Now we will show that each of the statements (i)-(iii) of Theorem \ref{t al=0 beta min} takes place
for constant constraints $b_{1,2}$ under the assumption that at least one of the constants  $b_1^{1/2}$, $b_2^{1/2}$ is not in the interval
$[\nu_1,\nu_2]$.

Recall that the positive function $K_1 (b)$ defined in Proposition \ref{p const struct}
for positive $b$ is given by $ K_1 (b) = K_1 (b,\nu_1,\nu_2)
:=  \frac{1+\nu_1/\nu_2 } {\frac{\nu_1}{b^{1/2}} + \frac{b^{1/2}}{\nu_2}}  $.
Define
\begin{eqnarray*}
K_2 (b) & = & K_2 (b,\nu_1,\nu_2)  :=  \sgn [ 1-K_1 (b)]
\left| \frac{1-K_1(b)}{1+K_1(b)} \right|^{1/\sqrt{b}} \text{ for } b>0 , \\
K_2 (0) & = & K_2 (0,\nu_1,\nu_2)  := \exp( - 2/\nu_1 - 2/\nu_2)
\quad \text{(when $\nu_1 = 0$, we suppose $K_2 (0) := 0$).}
\end{eqnarray*}

\begin{cor} \label{c const b12 0 adm}
Let $b_1$ and $b_2$ be nonnegative constants such that $b_1^{1/2} < \nu_1$ or $b_2^{1/2} > \nu_2$.
Then:
\item[(i)] $\alpha = 0 $ is a strict local minimizer for $\beta_{\min}$ and
$\beta_{\min} (0) = - \frac{1}{2(a_2-a_1)} \, \ln \, \max \{K_2 (b_1), K_2 (b_2)\}$.
\item[(ii)] If $ K_2 (b_1) = K_2 (b_2)$, then the set of structures of minimal decay for the frequency $0$ consists
of two structures, $b_1$ and $b_2$.
\item[(iii)] If $ K_2 (b_1) > K_2 (b_2)$, then $b_1$  is the unique structure of minimal decay for the frequency $0$.
\item[(iv)] If $ K_2 (b_1) < K_2 (b_2) $, then $b_2$  is the unique structure of minimal decay for the frequency $0$.
\end{cor}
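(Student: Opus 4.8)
The plan is to feed the explicit homogeneous spectrum of Proposition~\ref{p const struct} into Theorem~\ref{t al=0 beta min}, and then combine the resulting formula for $\beta_{\min}(0)$ with Proposition~\ref{p loc extr al=0}.

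\emph{Admissibility of $\alpha=0$.} First I would check that $0\in\re \Si[\A]$, so that Theorem~\ref{t al=0 beta min} applies. If $b_1^{1/2}<\nu_1$, then $b_1^{1/2}\notin[\nu_1,\nu_2]$ (recall $\nu_1\le\nu_2$) and $B\equiv b_1\in\A$; by Proposition~\ref{p const struct}~(ii)--(iii) the homogeneous structure $b_1$ has a pure imaginary quasi-eigenvalue $\om_0(b_1)$ --- the $n=0$ term in~(\ref{e om n hom}), or the single quasi-eigenvalue when $b_1=0\ne\nu_1$ --- so $0\in\re \Si(b_1)\subset\re \Si[\A]$. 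The case $b_2^{1/2}>\nu_2$ is symmetric, with $B\equiv b_2$.

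\emph{The formula for $\beta_{\min}(0)$.} From~(\ref{e om n hom}) one reads off that a nonnegative constant $b$ has a pure imaginary quasi-eigenvalue precisely when either $b>0$ with $b^{1/2}\notin[\nu_1,\nu_2]$, or $b=0\ne\nu_1$; in that case it is unique. The next (and only computational) step is to verify, directly from the definitions of $K_1(b),K_2(b)$ and the sign rule~(\ref{e sign of 1-K1}), that under these conditions $K_2(b)\in(0,1)$ and
\[
|\im\om_0(b)|=-\tfrac{1}{2(a_2-a_1)}\ln K_2(b)
\]
(the convention $K_2(0):=0$ for $\nu_1=0$ matching the absence of such a quasi-eigenvalue, and $K_2(b)\le 0$ when $b^{1/2}\in[\nu_1,\nu_2]$). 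Hence $\Setim (b_1,b_2)$ consists of those points among $\om_0(b_1),\om_0(b_2)$ for which $K_2(b_j)>0$; since $-\tfrac{1}{2(a_2-a_1)}\ln(\cdot)$ is decreasing and the hypothesis forces $\max\{K_2(b_1),K_2(b_2)\}>0$, Theorem~\ref{t al=0 beta min}~(ii) gives $\beta_{\min}(0)=\min_{\om\in\Setim (b_1,b_2)}|\im\om|=-\tfrac{1}{2(a_2-a_1)}\ln\max\{K_2(b_1),K_2(b_2)\}$. For the strictness assertion of~(i), Proposition~\ref{p const struct}~(ii)--(iii) guarantees that every quasi-eigenvalue of a constant structure is simple, so $\Si_\mult(b_1)=\Si_\mult(b_2)=\varnothing$ and the hypothesis of Theorem~\ref{t al=0 beta min}~(iii) holds automatically; therefore $\alpha=0$ is a strict local minimizer for $\beta_{\min}$.

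\emph{Optimal structures.} By Proposition~\ref{p loc extr al=0}, any structure of minimal decay for the frequency $0$ is $\equiv b_1$ or $\equiv b_2$; and $b_j$ is such a structure exactly when $|\im\om_0(b_j)|=\beta_{\min}(0)$, i.e., by the displayed identity, when $K_2(b_j)=\max\{K_2(b_1),K_2(b_2)\}$. Distinguishing the three alternatives $K_2(b_1)=K_2(b_2)$, $K_2(b_1)>K_2(b_2)$, $K_2(b_1)<K_2(b_2)$ then yields statements (ii), (iii), (iv) respectively; in the first one the two structures are genuinely distinct because $\meas E>0$ forces $b_1<b_2$ for constants. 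I expect no real obstacle here, only the nuisance of checking the degenerate sub-cases ($b_j=0$, $\nu_1=0$, and $b_j^{1/2}\in\{\nu_1,\nu_2\}$ with $\Si(b_j)=\varnothing$), where one must confirm that the conventions built into the definition of $K_2$ are consistent with Proposition~\ref{p const struct}.
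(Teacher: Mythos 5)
Your proposal is correct and follows essentially the same route as the paper: admissibility of $\alpha=0$ and the explicit spectrum come from Proposition \ref{p const struct}, the identity $K_2(b)=\exp\bigl(2(a_2-a_1)\im\om_0(b)\bigr)$ (equivalently $K_2(b)>0$ iff $\Si(b)\cap\ii\RR=\{\om_0(b)\}$, $K_2(b)\le 0$ iff it is empty) is fed into Theorem \ref{t al=0 beta min}, and simplicity of quasi-eigenvalues of homogeneous structures gives strictness, with Proposition \ref{p loc extr al=0} pinning down the optimal structures. Your write-up is just a more detailed version of the paper's argument (which compresses the same steps, citing Remark \ref{r mult const al=0} for the multiplicity point), including the same degenerate sub-case checks.
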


\begin{rem} \label{r mult const al=0}
By Proposition \ref{p const struct}, the quasi-eigenvalue of minimal decay 
$\om_0 = -\ii \beta_{\min} (0)$
has multiplicity 1 for every associated structure of minimal decay in all the cases of Corollary \ref{c const b12 0 adm}
(cf. \cite{CO96,Ka13_KN,Ka13,Ka14}).
\end{rem}

\begin{proof}[Proof of Corollary \ref{c const b12 0 adm}.]
Proposition  \ref{p const struct} implies that $0$ is an admissible frequency. It follows from
(\ref{e sign of 1-K1}) that $K_2 (b) \le 0$ exactly when $\Si (b) \cap \ii \RR = \varnothing$.  On the other side, $K_2 (b) > 0$ exactly when $\Si (b) \cap \ii \RR = \{ \om_0 (b) \}$, where $\om_0 (b)$ is defined by (\ref{e om n hom}). In this case, $K_2 (b) = \exp \left( 2 (a_2-a_1) \im \om_0 (b) \right)$. This, Theorem \ref{t al=0 beta min}, and Proposition \ref{p const struct} complete the proof
(see also Remark \ref{r mult const al=0}).
\end{proof}

\begin{ex}[non-uniqueness of optimizer] \label{ex nonuniq}
Let $C \in \RR_+$. Put $b_1 (x) \equiv C^2$, $b_2 (x) \equiv 4 C^2$, 
$\nu_1 = \nu_2 = C\sqrt{3}$. This 
correspond to optimization problem for a cavity constrained by $C^2 \le B(x) \le  4 C^2$
and placed into the outer medium 
$(-\infty,a_1) \cup (a_2,+\infty)$ with permittivity  $3C^2$ (cf. Example \ref{ex 2-side}).  
The length $a_2 - a_1$ is not important for the present example. 

Let $\om_0 := -  \frac{ \ii }{ 2 C (a_2 -a_1) }
\ln \left( 7+4\sqrt{3} \right)$.
Then the pairs $(\om_0,b_1)$ and $(\om_0,b_2)$ are:
\begin{itemize}
\item[(a)] local minimizers for $\Dr$ and local maximizers of $\im \om $ for the frequency $0$.
\end{itemize}
 Moreover,
\begin{itemize}
\item[(b)] $b_1$ and $b_2$ are structures of minimal decay for the frequency $0$,
\item[(c)] the frequency $\alpha_0 = 0$ is a strict local minimizer for $\beta_{\min}$.
\end{itemize}

These statements follow from Corollary \ref{c const b12 0 adm} (i)-(ii) and straightforward calculations. This example answers partially the question of uniqueness of optimal structure discussed in \cite{HS86,Ka14,Ka13_KN}
\end{ex}

\begin{rem}[cf. \cite{CZ95,Ka13}] \label{r const b12 0 is not admis}
Considering the case when constant functions $b_{1,2}$ do not satisfy the assumption of Corollary \ref{c const b12 0 adm}, we obtain the following result: if $\nu_1 \le \sqrt{B(x)} \le \nu_2$ a.e., then $\Si (B) \cap \ii \RR = \emptyset$. 
Indeed, Theorem \ref{t al=0 beta min} (i) and Proposition \ref{p const struct} imply that $0 \not \in \re \Si [\A]$ when $\nu_1 \le b_1^{1/2} \le b_2^{1/2} \le \nu_2$. 
\end{rem}

\section{Local minima in the case of small contrast}
\label{s small cont}

Local minimizers for $\beta_{\min}$ and for the decay rate functional $\Dr$ were defined in Section \ref{ss def optim}.

\begin{thm} \label{t small diel contrast}
Let $b  \in \RR_+ \setminus [\nu_1^2,\nu_2^2]$.
Then for every natural number $N $ there exists $\ep>0$ such that for any admissible family
$\A$  defined by constant constraints
\[
b_1 \equiv b - \ep_1 , \qquad b_2 \equiv b + \ep_2 \ \
\text{ with } 0<\ep_1+\ep_2 < \ep \text{ and }  \ep_{1,2} \ge 0,
\]
the following statements hold:
\item[(i)] In the interval $ |\alpha | < \frac{(N+1/2)\pi}{b^{1/2} (a_2 -a_1) }$
there exist at least $2N+1$  local minimizers for $\beta_{\min}$.

\item[(ii)] There exist at least $2N+1$  local minimizers and at least $2N+1$  local maximizers for
$\Dr$ with associated frequencies in the interval $ |\alpha | < \frac{(N+1/2)\pi}{b^{1/2} (a_2 -a_1) }$.
\end{thm}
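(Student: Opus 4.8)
The plan is to build the local extremizers perturbatively around the explicitly known homogeneous situation $B\equiv b$, using Proposition \ref{p const struct} together with the perturbation theory of Proposition \ref{p per k mult} and the characterization in Theorem \ref{t char Si nl}. First I would recall that for the constant coefficient $b$ the resonances $\om_n(b)$ from \eqref{e om n hom} are simple and lie on a horizontal line $\im\om = -\tfrac1{2(a_2-a_1)b^{1/2}}\ln\left|\tfrac{1+K_1}{1-K_1}\right|$, equally spaced in $\re\om$ with spacing $\tfrac{\pi}{b^{1/2}(a_2-a_1)}$; in the interval $|\alpha|<\tfrac{(N+1/2)\pi}{b^{1/2}(a_2-a_1)}$ there are exactly $2N+1$ of them (this is where the choice $b\notin[\nu_1^2,\nu_2^2]$, i.e.\ $K_1<1$, matters — it pins down the $+n$ branch in \eqref{e om n hom}, so the count is clean). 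For the degenerate family $b_1\equiv b-\ep_1$, $b_2\equiv b+\ep_2$ with $\ep_1+\ep_2$ small, the admissible set $\A$ is a small $L^1$-neighborhood of the single structure $b$ (of $L^1$-diameter $\le(\ep_1+\ep_2)(a_2-a_1)$), so every admissible pair $(\om,B)$ with $\om$ near some $\om_n(b)$ has $B$ close to $b$ in $L^1$.

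Next I would use weak-$*$ continuity and the Rouché-type Proposition \ref{p weak cont mult}: fixing a small disc $\DD_\delta(\om_n(b))$ containing no other resonance of $b$, for $\ep$ small enough every $B\in\A$ has exactly one simple quasi-eigenvalue $\om(B)$ in that disc, and $\re\Si[\A]$ restricted to a neighborhood of $\re\om_n(b)$ is produced entirely by perturbations of $\om_n(b)$. Then $\beta_{\min}$ restricted to that neighborhood equals $\min\{-\im\om(B):B\in\A\}$ over this one branch. Because $\Si[\A]$ is closed (Proposition \ref{p Sclosed}) and the relevant slice is compact, the minimum over each branch is attained, giving an admissible frequency $\alpha_n$ near $\re\om_n(b)$ that is a local minimizer for $\beta_{\min}$ — these $2N+1$ points (one per $n$ with $|n|\le N$, roughly) are pairwise distinct for $\ep$ small since they sit near distinct $\re\om_n(b)$, and this yields (i). For (ii): Proposition \ref{p exist loc} upgrades each such local $\beta_{\min}$-minimizer to a local $\Dr$-minimizer $(\om,B)$; for the $\Dr$-maximizers I would symmetrically take, on each branch, the structure realizing $\max\{-\im\om(B)\}$, and check via Proposition \ref{p per k mult} and the first-order formula \eqref{e paBD F} that at a point where the imaginary part is extremal over $\A$ one cannot further decrease (resp.\ increase) $\im\om$ — equivalently, use Theorem \ref{t char Si nl}: at an interior (non-extreme-point) structure the cone $\cone K(\om,B;\A-B)$ would be all of $\CC$, so $\im\om$ could be pushed both ways, contradicting extremality; hence the extremizing $B$ is an extreme point and the pair is a genuine local $\Dr$-extremizer.

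The main obstacle I expect is making the counting and the branch-separation fully rigorous and uniform in the family: one must choose a single $\ep$ that works simultaneously for all $2N+1$ discs and for all admissible $(\ep_1,\ep_2)$ with $\ep_1+\ep_2<\ep$, and one must rule out (a) resonances escaping from $\DD_\delta(\om_n(b))$, (b) the branch near $\om_{\pm N}(b)$ leaking past the endpoint $\pm\tfrac{(N+1/2)\pi}{b^{1/2}(a_2-a_1)}$ of the frequency interval, and (c) new resonances entering from $\infty$ (Remark \ref{r glob discont}) — though the latter is not an issue here since we only claim \emph{existence} of $2N+1$ extremizers, not that there are no others. A secondary technical point is the maximizer half of (ii): one needs to know that on each branch the map $B\mapsto\im\om(B)$ is genuinely non-constant (so that a maximizer is not simultaneously a minimizer), which follows because otherwise \eqref{e paBD F} would force $\int\theta^2 V\,\dd s$ purely imaginary (after the normalization rotating the half-plane) for all $V\in\A-B$, i.e.\ $\cone S_0\subset\ii\RR$, which is incompatible with $\om\notin\ii\RR$ and $\meas E>0$ by the rotational analysis of Lemma \ref{l turn int}. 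Once these uniformity and non-degeneracy points are settled, assembling the statement is routine.
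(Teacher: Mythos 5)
Your overall strategy is the paper's own (perturb the explicitly known resonances of the homogeneous structure $b$, use Proposition \ref{p weak cont mult} to preserve multiplicities, use closedness and compactness to extract extremizers on each branch), but there is a genuine gap in part (i), and it sits exactly at the point you wave away. You apply Proposition \ref{p weak cont mult} only to small discs $\DD_\delta(\om_n(b))$ and then assert that ``$\beta_{\min}$ restricted to that neighborhood equals $\min\{-\im\om(B):B\in\A\}$ over this one branch''; later you dismiss your own concern (c) on the grounds that ``we only claim existence \dots not that there are no others.'' That dismissal is wrong for (i): $\beta_{\min}(\alpha)$ is by definition an infimum over \emph{all} admissible resonances with real part $\alpha$, so if some $B\in\A$ had an extra quasi-eigenvalue in the strip between the perturbed branch and the real axis, with real part near $\re\om_n(b)$, your candidate frequency would fail to be a local minimizer of $\beta_{\min}$ --- and the $\Dr$-minimizers of (ii), which you derive from (i) via Proposition \ref{p exist loc}, would fail with it. The disc-level Rouch\'e argument does not exclude such resonances. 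The paper excludes them in Theorem \ref{t sm contr gen}(2) by running the same multiplicity-preservation argument on a rectangle $\{\alpha_1<\re z<\alpha_2,\ -\beta_0<\im z<0\}$ abutting the real axis and containing only $\om_n(b)$: for $\ep$ small every $B\in\A$ has total multiplicity one there, so no admissible resonance lies between the branch and $\RR$ in the frequency window, while any admissible resonance outside the rectangle with real part in the window has decay at least $\beta_0$; this dichotomy is what gives local minimality of $\beta_{\min}$ at the frequency realizing the minimum of $|\im\om|$ over the part of $\Si[\A]$ in the rectangle. Note this is not merely an ``escape to infinity'' issue you may ignore: resonances coming from $\infty$ with bounded real part have $|\im\om|\to\infty$ and are harmless, but the finite strip below the real axis still has to be controlled, and only the rectangle version of Proposition \ref{p weak cont mult} does that.

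Part (ii), by contrast, can be closed with your disc-level control alone, because local $\Dr$-extremality is localized in $\om$: any admissible pair $(\om_1,B_1)$ with $\om_1$ close to the top (resp.\ bottom) point of the compact component $\La_n=\Si[\A]\cap\overline{\DD_\delta(\om_n(b))}$ has $\om_1\in\DD_\delta(\om_n(b))$, hence $\om_1\in\La_n$, hence its decay rate is no smaller (resp.\ no larger) than the extremal value over $\La_n$. This is exactly how the paper proves Theorem \ref{t sm contr gen}(1.ii), and it makes your detour through Theorem \ref{t char Si nl} and extreme points unnecessary (that argument does not by itself establish local extremality; the extreme-point property of an optimal $B$ is a consequence, not the proof), and likewise the non-constancy discussion at the end is not needed for the statement. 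So: repair (i) by replacing the discs with $2N+1$ rectangles touching $\RR$, one per $|n|\le N$, with a common $\ep$, and obtain the $\Dr$-minimizers directly as the top points of the components rather than through (i); with these changes your argument coincides with the paper's.
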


\begin{rem} \label{r small diel contrast}
In the case $b  \in (\nu_1^2,\nu_2^2)$, Theorem \ref{t small diel contrast} becomes valid if one change the number of extremizers to $2N$ everywhere and the interval to $ |\alpha | < \frac{N\pi}{b^{1/2} (a_2 -a_1) } $ (see Figure \ref{f drawing} and Remark \ref{r drawing}).
\end{rem}

The theorem  follows from the next (more general) result, its proof, and Proposition \ref{p const struct}.

Recall that a set $S \subset \CC$ is called path-connected if for any two points $z_{1,2} \in S$ there exist a continuous function $f:[0,1] \to S$
such that $f(0) = z_1$ and $f(1)=z_2$.

\begin{thm} \label{t sm contr gen}
Let $B_0 \in L^1$, $B_0 (x) \ge 0$ a.e., and let $\D$ be an open bounded subset of $\CC$.
Assume that $\Si (B_0) \cap \D$ consists of $n \in \NN $ distinct points $\{ \om_{0,j}  \}_{j=1}^n$ and there are no points of $\Si (B_0)$ on the boundary $\Bd \D$.

\item[1)] Then there exists $\ep>0$ such that for any functions
$b_{1,2} (x)$ satisfying
\begin{equation} \label{e b1<B0<b2}
0 \le b_1 \le B_0 \le b_2 \ \text{ a.e. \ \ \   and } 0 < \| b_1 - b_2 \|_1 < \ep
\end{equation}
the following statements hold:
\subitem (i) $\Si [\A] \cap \D$ consists of $n$ (disjoint) closed path-connected components
$\La_1$, \dots, $\La_n$ satisfying $\om_{0,j} \in \La_j$, $j=1,\dots,n$.

\subitem (ii) There exist at least $n$ local minimizers $( \om_j^{\min} , B_j^{\min} )$ and at least $n$ local maximizers $( \om_j^{\max} , B_j^{\max} )$ for $\Dr$ such that $\om_j^{\min(\max)} \in \La_j$, $j=1$, \dots, $n$.

\subitem (iii) For each $\alpha \in \cup_1^n \re \La_j$, there exists at least one local minimizer and at least one local maximizer of $\im \om$ for the frequency $\alpha$.

\item[2)] If, additionally, $\D$ has the form $ \{ \alpha_1<\re z < \alpha_2 , \ \ - \beta_0<\im z < 0 \}$ with certain $\alpha_{1,2}, \beta_0 \in \RR$,  
and if $\ep$ is chosen as in statement (1), then in the frequency interval $(\alpha_1,\alpha_2)$
there exists at least one local minimizer $\alpha_0$ for $\beta_{\min}$.
This minimizer $\alpha_0$ can be chosen such that $\beta_{\min} (\alpha_0) = \min_{\om \in \Si [A] \cap \D} |\im \om|$.
\end{thm}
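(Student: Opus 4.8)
The plan is to localize the admissible spectrum near the unperturbed quasi-eigenvalues $\om_{0,j}$ by a Rouch\'e-type multiplicity count, to obtain the components $\La_j$ and their path-connectedness by continuing quasi-eigenvalues along line segments inside the convex set $\A$, and then to read off all the asserted extremizers from compactness of the $\La_j$; statement (2) is a short separate argument about $\beta_{\min}$. \emph{Localization.} Fix $R:=\|B_0\|_1+1$ and, for each $j$, choose $r_j>0$ so small that the closed discs $\overline{\DD_{r_j}(\om_{0,j})}$ are pairwise disjoint, contained in $\D$, and contain no point of $\Si(B_0)$ besides $\om_{0,j}$; in particular the circles $\Bd \DD_{r_j}(\om_{0,j})$ and $\Bd \D$ carry no quasi-eigenvalue of $B_0$. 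Applying Proposition \ref{p weak cont mult} to each disc $\DD_{r_j}(\om_{0,j})$, to $\D$, and to $\D':=\D\setminus\bigcup_j\overline{\DD_{r_j}(\om_{0,j})}$ produces a $\rho$-neighbourhood $W\subset\BB_R$ of $B_0$ such that every $B\in W$ has no quasi-eigenvalue on those boundaries, has exactly $m_j$ quasi-eigenvalues (with multiplicity) in $\DD_{r_j}(\om_{0,j})$ --- where $m_j$ is the multiplicity of $\om_{0,j}$ for $B_0$ --- and none in $\D'$. Since $L^1$-convergence implies $\rho$-convergence, $W$ contains a ball $\BB_{\ep'}(B_0)\cap\BB_R$; set $\ep:=\min(\ep',1)$. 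For any admissible family satisfying (\ref{e b1<B0<b2}) and any $B\in\A$ one has $\|B\|_1\le\|b_2\|_1\le\|B_0\|_1+\ep<R$ and $|B-B_0|\le b_2-b_1$ a.e., hence $\|B-B_0\|_1\le\|b_2-b_1\|_1<\ep\le\ep'$, so $B\in W$. Therefore $\Si[\A]\cap\D\subset\bigcup_j\DD_{r_j}(\om_{0,j})$, and by Proposition \ref{p Sclosed} each $\La_j:=\Si[\A]\cap\DD_{r_j}(\om_{0,j})=\Si[\A]\cap\overline{\DD_{r_j}(\om_{0,j})}$ is compact and nonempty (it contains $\om_{0,j}$).

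\emph{Components and statement (1)(i).} Convexity of $\A$ gives, for any $B_1\in\A$, the segment $B(t):=(1-t)B_1+tB_0\subset\A\subset W$, $t\in[0,1]$; for each $t$, $F(\cdot;B(t))$ has exactly $m_j$ zeros in $\DD_{r_j}(\om_{0,j})$. By the local Puiseux continuation of Proposition \ref{p per k mult}, applied at each $t$ in both directions along the segment and glued over the compact interval $[0,1]$, a given $\om_1\in\Si(B_1)\cap\DD_{r_j}(\om_{0,j})$ extends to a continuous path $\om(\cdot):[0,1]\to\DD_{r_j}(\om_{0,j})$ with $F(\om(t);B(t))=0$ and $\om(0)=\om_1$; since $\om_{0,j}$ is the only quasi-eigenvalue of $B_0$ in that disc, $\om(1)=\om_{0,j}$, and $\om(t)\in\Si[\A]\cap\DD_{r_j}(\om_{0,j})=\La_j$ throughout. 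Hence each point of $\La_j$ is joined inside $\La_j$ to $\om_{0,j}$, so $\La_j$ is path-connected; as the discs are disjoint, $\La_1,\dots,\La_n$ are precisely the connected components of $\Si[\A]\cap\D$, and they are disjoint and closed. This is the one genuinely delicate point: Proposition \ref{p per k mult} only gives local branches, and where quasi-eigenvalues of $B(t)$ collide the continuation is not unique, so one must invoke the standard (but technical) fact that a globally continuous root-selection from $\om_1$ to $\om_{0,j}$ over all of $[0,1]$ nevertheless exists.

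\emph{Extremizers, statements (1)(ii) and (1)(iii).} Each $\La_j$ is compact, so $-\im \om$ attains its minimum and maximum on $\La_j$, say at $\om_j^{\min}$ and $\om_j^{\max}$; pick $B_j^{\min},B_j^{\max}\in\A$ realizing them as quasi-eigenvalues. Since $\om_j^{\min}\in\La_j\subset\DD_{r_j}(\om_{0,j})$ lies at positive distance from $\Bd \D$ and from the other components while $\Si[\A]\cap\D\subset\bigcup_k\La_k$, every admissible pair $(\om_1,B_1)$ with $|\om_1-\om_j^{\min}|$ small has $\om_1\in\La_j$, whence $\Dr(\om_1,B_1)=-\im \om_1\ge-\im \om_j^{\min}=\Dr(\om_j^{\min},B_j^{\min})$; thus $(\om_j^{\min},B_j^{\min})$ is a local minimizer for $\Dr$, and symmetrically $(\om_j^{\max},B_j^{\max})$ is a local maximizer, proving (1)(ii). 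For (1)(iii), fix $\alpha\in\re \La_j$; the slice $\La_j\cap(\alpha+\ii\RR)$ is nonempty and compact, and repeating the previous argument, but now only comparing with admissible pairs of real part $\alpha$, shows that its points of largest and of smallest imaginary part give a local maximizer and a local minimizer of $\im \om$ for the frequency $\alpha$.

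\emph{Statement (2).} Now let $\D=\{\alpha_1<\re z<\alpha_2,\ -\beta_0<\im z<0\}$. By the localization step no $B\in\A$ has a quasi-eigenvalue on $\Bd \D$, so $\Si[\A]\cap\D=\Si[\A]\cap\overline\D$ is compact and nonempty; let $\om^*=\alpha_0-\ii\beta^*$ minimize $|\im \om|$ over it, so $\alpha_1<\alpha_0<\alpha_2$ and $0<\beta^*<\beta_0$. As $\om^*\in\Si[\A]$, $\beta_{\min}(\alpha_0)\le\beta^*$; and if $\alpha_0-\ii\beta\in\Si[\A]$ with $\beta<\beta^*$, then $0<\beta<\beta_0$ forces $\alpha_0-\ii\beta\in\Si[\A]\cap\D$, contradicting the choice of $\beta^*$ --- hence $\beta_{\min}(\alpha_0)=\beta^*=\min_{\om\in\Si[\A]\cap\D}|\im \om|$. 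Finally, for admissible $\alpha$ near $\alpha_0$ the infimum defining $\beta_{\min}(\alpha)$ is attained by Corollary \ref{c exist}; were $\beta_{\min}(\alpha)<\beta^*$, then $\alpha\in(\alpha_1,\alpha_2)$ together with $0<\beta_{\min}(\alpha)<\beta_0$ would put $\alpha-\ii\beta_{\min}(\alpha)$ into $\Si[\A]\cap\D$ with strictly smaller modulus of imaginary part, again a contradiction. Therefore $\beta_{\min}(\alpha)\ge\beta_{\min}(\alpha_0)$ for all admissible $\alpha$ near $\alpha_0$, i.e., $\alpha_0$ is a local minimizer for $\beta_{\min}$, as claimed.
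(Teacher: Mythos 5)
Your proof is correct and follows essentially the same route as the paper: Rouch\'e-type localization via Proposition \ref{p weak cont mult}, path-connectedness of the $\La_j$ by continuing quasi-eigenvalues along convex segments inside $\A$, compactness of the $\La_j$ for the extremizers, and a direct short argument for statement (2). The ``standard but technical'' global continuous root-selection you invoke at the delicate point is exactly what the paper isolates and proves as Proposition \ref{p om path} (by the same Weierstrass--Puiseux gluing you sketch), so that step is fully covered.
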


The rest of this section is devoted to the proof of Theorem \ref{t sm contr gen}.

\begin{prop} \label{p om path}
Assume that $B_1 , B_2 \in L^1_\CC$ and a bounded open set $\D \subset \CC$ are such that
for all $ t \in [0,1]$,
\begin{equation} \label{e as om not in bd}
\Si (B(t)) \cap \Bd \D = \varnothing , \qquad \text{ where } B(t) := (1-t)B_1 + t B_2 .
\end{equation}
Then there exist a nonnegative integer $n$ and a family $\{ \om_j (\cdot) \}_1^n$
of continuous complex-valued functions on $[0,1]$ such that for all $t \in [0,1]$,
\begin{equation*}
\Si (B(t)) \cap \D = \{ \om_j (t) \}_1^n  \qquad \text{ taking multiplicities into account}.
\end{equation*}
\end{prop}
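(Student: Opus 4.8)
The plan is to track the zeros of the analytic function $z \mapsto F(z; B(t))$ inside $\D$ as $t$ runs over $[0,1]$, using the fact (Lemma \ref{l an}) that $F$ is jointly analytic in $(z,B)$ and the hypothesis (\ref{e as om not in bd}) that no zero crosses $\Bd\D$. First I would invoke Proposition \ref{p weak cont mult} (or equivalently the argument-principle/Rouch\'e argument of \cite[Theorem 9.17.4]{D69} that underlies it): since $\Bd\D$ is compact and carries no quasi-eigenvalue of any $B(t)$, the total multiplicity $n(t) := \sum_{\om \in \Si(B(t)) \cap \D} \mult(\om)$ is given by the contour integral $\frac{1}{2\pi \ii}\oint_{\Bd\D} \frac{\pa_z F(z;B(t))}{F(z;B(t))}\,\dd z$, which depends continuously on $t$ (the map $t \mapsto B(t)$ is continuous into $L^1$, hence continuous into $(\BB_R,\rho)$ for $R = \max(\|B_1\|_1,\|B_2\|_1)$, and $F$ and $\pa_z F$ are continuous in the second argument on $(\BB_R,\rho)$ uniformly for $z$ in the compact set $\Bd\D$). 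An integer-valued continuous function on $[0,1]$ is constant, so $n(t) \equiv n$ for some fixed nonnegative integer $n$.

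Next I would produce the continuous selection of the $n$ zeros. The cleanest route is a compactness-and-covering argument on $[0,1]$: fix $t_0 \in [0,1]$; around each zero of $F(\cdot;B(t_0))$ in $\D$ choose a small disc on whose boundary $F(\cdot;B(t_0))$ does not vanish, and make the discs pairwise disjoint and contained in $\D$; by Proposition \ref{p weak cont mult} applied with each such disc as the region (and with $\D\setminus(\text{union of closed discs})$ as an additional region, which contains no zeros of $F(\cdot;B(t_0))$), there is $\de = \de(t_0) > 0$ such that for $|t - t_0| < \de$ the zeros of $F(\cdot;B(t))$ in $\D$, counted with multiplicity, are distributed among these discs with the same per-disc totals as at $t_0$. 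Covering $[0,1]$ by finitely many such intervals and using a Lebesgue-number argument, one partitions $[0,1]$ into subintervals $[s_{k-1},s_k]$ on each of which a fixed disjoint family of discs $\{D_{k,i}\}_i$ (with prescribed multiplicity totals $m_{k,i}$) captures the zeros; the discs at consecutive endpoints $s_k$ can be taken compatibly (refine so that each disc of one family sits inside a disc of the other near the common endpoint). This lets one glue local labelings.

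The labeling inside a single disc $D_{k,i}$ of multiplicity $m_{k,i}$ is the place where Puiseux theory enters exactly as in the proof of Proposition \ref{p per k mult}: by the Weierstrass preparation theorem $F(z;B(t)) = P_{k,i}(z,t)\,R(z,t)$ on $D_{k,i}\times(s_{k-1},s_k)$ with $R$ nonvanishing and $P_{k,i}$ a degree-$m_{k,i}$ monic Weierstrass polynomial in $z$ with coefficients analytic in $t$ (after a local holomorphic extension of the real parameter $t$; analyticity of $B(\cdot)$ as an affine map into $L^1$ and Lemma \ref{l an} (i) supply this). Its roots are given by convergent Puiseux series in fractional powers of the local parameter, and each branch, continued along $[s_{k-1},s_k]$, is a continuous (indeed, piecewise-analytic) function; the $m_{k,i}$ branches, listed with multiplicity, exhaust $\Si(B(t))\cap D_{k,i}$. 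Collecting $n = \sum_i m_{k,i}$ such branches over all discs in the $k$-th family, and matching them across the breakpoints $s_k$ (a finite combinatorial bookkeeping, possible because the two families agree on multiplicity totals in the overlapping small discs), yields $n$ continuous functions $\om_1,\dots,\om_n$ on $[0,1]$ with $\Si(B(t))\cap\D = \{\om_j(t)\}_1^n$ for all $t$, counted with multiplicity.

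The main obstacle is not the local existence of the branches — that is Puiseux theory, already used in Proposition \ref{p per k mult} — but the \emph{global} gluing: ensuring that the finitely many local labelings obtained on the subintervals $[s_{k-1},s_k]$ can be concatenated into functions continuous \emph{across} the breakpoints. The technical heart is therefore the refinement step guaranteeing that near each $s_k$ the disc families from the two adjacent subintervals are nested and carry matching multiplicities, so that a branch exiting one disc is unambiguously identified with a branch entering the corresponding disc of the other family; this is a routine but careful consequence of Proposition \ref{p weak cont mult} applied at $t = s_k$ with sufficiently small discs. No new analytic input beyond Lemmas \ref{l an}, \ref{l weak cont} and Proposition \ref{p weak cont mult} is needed.
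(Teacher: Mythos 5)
Your proposal is correct and follows essentially the same route as the paper's own proof: local continuous selection of the zeros of $F(\cdot;B(t))$ via Proposition \ref{p weak cont mult} together with the Weierstrass preparation theorem and Puiseux series, then a finite-cover/partition argument on $[0,1]$ and gluing at the breakpoints (your treatment of the gluing is in fact more explicit than the paper's). One small caveat: for a general bounded open $\D$ the boundary need not be a rectifiable contour, so the argument-principle integral over $\Bd\D$ should not be taken literally; but this is harmless, since the constancy of the total multiplicity already follows from Proposition \ref{p weak cont mult} applied with the region $\D$ itself, exactly as you also indicate.
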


\begin{proof} 
Since $F (\cdot; B ) \not \equiv 0$ for every $B \in  L^1_\CC$ (see Section \ref{ss F}),
the total multiplicity of quasi-eigenvalues of $B$ in $\D$ is finite.
It follows from assumption (\ref{e as om not in bd}),
Proposition \ref{p weak cont mult}, the Weierstrass preparation theorem, and the Puiseux series theory
(see e.g. \cite[Theorem XII.2]{RS78IV} and \cite{VT74,Ka13,Ka14}) that for every $s \in [0,1]$
there exist $\ep (s)>0$, a nonnegative integer $n_s$, and continuous in $I_s := (s-\ep(s),s+\ep(s))$ functions
$\Om_{s,j} (t)$, $j=1$, \dots, $n_s$, such that $\Si (B(t)) \cap \D = \{ \Om_{s,j} (t) \}_{j=1}^{n_{\scriptstyle s}} $
for all $t \in I_s$
(taking multiplicities into account).

It is easy to see that $n_s$ does not depend on $s$. Let us denote this number $n$.
Choosing a minimal finite subcover from the open cover $\bigcup_{t \in [0,1]} I_t$ of $[0,1]$,
it is easy also to construct (using $I_s$ and $\Om_{s,j} (t)$) a finite partition $0=t_0 < t_1 < \dots < t_k = 1$
of the interval $[0,1]$ and families $\{ \om_{i,j} (\cdot) \}_{j=1}^n$, $i=0$, \dots, $k-1$, of continuous on $[t_i,t_{i+1}]$
functions with the property that $\Si (B(t)) \cap \D = \{ \om_{i,j} (t) \}_{j=1}^n $ for every $t \in [t_i,t_{i+1}]$
(taking multiplicities into account).
Now one can continuously 'glue' these families at $t_1$, \dots, $t_{k-1}$ to produce the desired family
$\{ \om_j (\cdot) \}_1^n$.
\end{proof}

\begin{proof}[Proof of Theorem \ref{t sm contr gen}]
Let us take $\de>0$ such that the closed discs $\overline{\DD_\de (\om_{0,j})}$ are pairwise disjoint and all lie in $\D$.
By Proposition \ref{p weak cont mult}, choosing $R>\| B_0 \|_1$, we can ensure that there
exists a neighborhood $W$ of $B_0$ in the topology of the metric space
$(\BB_R , \rho )$  such that for all $B \in W$:
\begin{itemize}
\item[(a)] the total multiplicity of quasi-eigenvalues of $B$ in each of the open discs $\DD_\de (\om_{0,j})$
coincides with the multiplicity of $\om_{0,j}$ for $B_0$,
\item[(b)] the total multiplicity of quasi-eigenvalues of $B$ in $\D$
coincides with that of $B_0$,
\item[(c)] there no quasi-eigenvalues of $B$ on each of the boundaries $\Bd \D$, $\Bd \DD_\de (\om_{0,j}) $,
$j=1$, \dots, $n$.
\end{itemize}

Let us take $\ep>0$ such that $\BB_{\ep} (B_0) \subset W$ and take $b_{1,2}$ satisfying (\ref{e b1<B0<b2}).
Then $\A \subset W$. Denote $\La_j := \Si [\A] \cap \overline{\DD_\de (\om_{0,j})}$. Then $\La_j$ are closed. By properties (a) and (b),
$\Si [\A] \cap \D = \cup \La_j $. By (c), $\La_j \subset \DD_\de (\om_{0,j})$. Proposition \ref{p om path} and the fact that $\A$ is convex
imply that each $\La_j$ is path-connected. This proves \textbf{(1.i)}.

Taking $\om_j \in \La_j$ such that $\im \om_j = \max_{\om \in \La_j} \im \om$ and
$B_j \in \A$ such that $\om_j \in \Si (B_j)$, one can see that $(\om_j, B_j)$ is a local minimizer for $\Dr$. Since $\D$ is bounded, so are $\La_j$. Hence, the existence of local maximizers for $\Dr$ can be shown in the same way. This proves \textbf{(1.ii)}. The arguments for \textbf{(1.iii)} are similar.
After all these considerations,  statement \textbf{(2)} is obvious.
\end{proof}

\section{Locally extremal $\om$ are nonlinear eigenvalues}
\label{s Opt=>nl}

Recall that the set $\Si^\nl$ of nonlinear eigenvalues was introduced in Section \ref{s Snl pert}.
This section is devoted to the following theorem, which states that quasi-eigenvalues optimal in several senses lie in $\Si^\nl$ .

\begin{thm} \label{t nl ep}
A number $\om$ is a nonlinear eigenvalue if at least one of the following conditions hold:
\item[(i)] $\om$ is a quasi-eigenvalue of minimal decay for the frequency $\re \om$,
\item[(ii)] $\re \om \neq 0$ and $\om$ is a boundary point of $\Si [\A]$,
\item[(iii)] there exists $B_0$ such that $(\om,B_0)$ is a local extremizer for $\Dr$,
\item[(iv)] there exists $B_0$ such that $(\om,B_0)$ is a local extremizer of $\im \om$ for the frequency $\re \om$.
\end{thm}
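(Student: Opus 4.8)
The plan is to reduce all four cases to the variational criterion of Theorem \ref{t char Si nl}: given an admissible pair $(\om, B_0)$ with $\re\om\neq 0$, we must show $\cone K(\om, B_0; \A - B_0)\neq\CC$, or equivalently (by equivalence (\ref{e cone <=> cone})) that the set $S_0 = C_0\,\frac{\pa F(\om;B_0)}{\pa B}[\A - B_0]$ has a conic hull that is not all of $\CC$. The contrapositive is the workhorse: if $\cone S_0 = \CC$, then by Carathéodory's theorem in $\RR^2$ there exist directions $V_1, V_2, V_3 \in \A - B_0$ whose images $w_j = C_0\,\frac{\pa F}{\pa B}(V_j)$ positively span $\CC$, so $0$ lies in the interior of $\conv\{w_1,w_2,w_3\}$. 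Then for suitable small $\zeta>0$, Proposition \ref{p per k mult} (ii)-(iii) gives perturbed simple quasi-eigenvalues $\Om_j(\zeta) = \om + [K(\om,B_0;V_j)\zeta]^{1/m} + o(\zeta^{1/m})$, and since $\{K(\om,B_0;V_j)\}_j = -\frac{m!}{\pa_z^m F}\{w_j/C_0\}_j$ also positively spans $\CC$, their $m$-th roots can be chosen to point into every open half-plane through $\om$. This produces, near $\om$, admissible quasi-eigenvalues on both sides of any line through $\om$ — in particular with $\im$ strictly larger and strictly smaller than $\im\om$, and also off the vertical line $\re z = \re\om$, and also strictly outside any half-plane bounding $\Si[\A]$ locally.

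I would then dispatch the four cases as follows. For (ii), if $\om\in\Bd\Si[\A]$ with $\re\om\neq 0$, the existence of a supporting structure $B_0\in\A$ with $\om\in\Si(B_0)$ follows from closedness of $\Si[\A]$ (Proposition \ref{p Sclosed}); if $\cone S_0 = \CC$ the construction above yields a full two-dimensional neighborhood of $\om$ inside $\Si[\A]$, contradicting $\om\in\Bd\Si[\A]$; hence $\cone S_0\neq\CC$ and Theorem \ref{t char Si nl} applies. For (iii), a local extremizer $(\om, B_0)$ for $\Dr$ constrains $\im\om$ from one side among admissible pairs $(\om_1,B_1)$ with $B_1$ close to $B_0$ in $L^1$ and $\om_1$ close to $\om$; since the $B_0 + \zeta V_j$ are $L^1$-close to $B_0$, the full-cone construction would produce an admissible pair violating the extremality (on the appropriate side), so again $\cone S_0\neq\CC$. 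Case (iv) is identical except one only needs perturbed quasi-eigenvalues with the same real part and larger (or smaller) imaginary part — still supplied by the construction whenever $\cone S_0=\CC$, since the root directions can be aimed into the vertical ray. Finally (i) is a special case: a quasi-eigenvalue of minimal decay for its frequency is, by Corollary \ref{c exist} (ii), a local maximizer of $\im\om$ for that frequency, hence covered by (iv); note $\re\om\neq 0$ must hold here because the minimal-decay property plus the classification in Section \ref{s 0fr} is not available when $\re\om=0$ — rather, for $\re\om=0$ one invokes Proposition \ref{p loc extr al=0} and symmetry (\ref{e Snl sym}) separately, observing that then $B_0\in\{b_1,b_2\}$ and $\om\in\Si(b_1)\cup\Si(b_2)$ gives $\om\in\Si^\nl$ directly since $b_{1,2}$ trivially satisfy (\ref{e B0=b1b2}) on the relevant sign sets.

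The main obstacle I anticipate is making the passage from "$\cone S_0=\CC$" to "there genuinely exists a nearby admissible quasi-eigenvalue violating extremality" fully rigorous, for two reasons. First, the asymptotic expansion (\ref{e Om asy}) only controls $\Om_j(\zeta)$ to leading order $\zeta^{1/m}$, so one must verify that the leading term is dominant and that the branch of $[\cdot]^{1/m}$ can be selected to land in the desired open half-plane — this requires $K(\om,B_0;V_j)\neq 0$, which is exactly what the positive-spanning choice guarantees (none of three vectors positively spanning $\CC$ can be zero, and one can perturb the $V_j$ slightly within the convex set $\A-B_0$ to keep them nonzero and still spanning). Second, one must be careful about multiplicity $m>1$: the $m$ branches emanating from $\om$ fill out $m$ sectors, and one needs that for at least one direction $V_j$ some branch enters the target half-plane; with $m$ branches equally spaced in argument this is automatic once the "center" argument $\frac1m\arg K(\om,B_0;V_j)$ is varied over a full period as $j$ ranges, which the spanning condition ensures.

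A secondary technical point worth flagging: in cases (iii)-(iv) the perturbed structures must genuinely lie in $\A$, i.e. $B_0+\zeta V_j\in\A$ for small $\zeta>0$; this is immediate since $V_j\in\A - B_0$ and $\A$ is convex, so $B_0+\zeta V_j = (1-\zeta)B_0 + \zeta(B_0+V_j)$ with $B_0+V_j\in\A$ — no further argument needed. With these points handled, the proof is a clean four-way reduction to Theorem \ref{t char Si nl} via the Carathéodory/positive-spanning dichotomy together with Proposition \ref{p per k mult}.
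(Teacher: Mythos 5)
Your overall strategy coincides with the paper's: reduce everything to the cone criterion of Theorem \ref{t char Si nl}, and treat $\om\in\ii\RR$ separately via Proposition \ref{p loc extr al=0}. Case (iii) as you argue it is essentially sound (modulo the branch-selection care you yourself flag). But the central step for cases (i), (ii) and (iv) has a genuine gap: from $\cone S_0=\CC$ you only construct, via Proposition \ref{p per k mult}, finitely many one-parameter branches $\Om_j(\zeta)=\om+[K(\om,B_0;V_j)\zeta]^{1/m}+o(\zeta^{1/m})$ emanating from $\om$ in positively spanning directions. A finite family of curves does not fill a two-dimensional neighborhood, so it does not contradict $\om\in\Bd\Si[\A]$ in case (ii); and for the fixed-frequency cases (i) and (iv) the comparison class consists only of admissible pairs with $\re\om_1=\re\om$ exactly, whereas ``aiming the root direction into the vertical ray'' controls only the leading order of $\Om_j(\zeta)-\om$: the $o(\zeta^{1/m})$ remainder can (and generically will) push $\re\Om_j(\zeta)$ off the line $\re z=\re\om$, so the perturbed quasi-eigenvalues you produce are not legitimate competitors for a local extremizer of $\im\om$ at fixed frequency. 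The paper closes precisely this gap by invoking the notion of $B_0$-local boundary/interior points and Proposition \ref{p inter of image} (imported from \cite{Ka14}): if $\cone \frac{\pa F(\om;B_0)}{\pa B}[\A-B_0]=\CC$, then $\om$ is a $B_0$-local \emph{interior} point of $\Si[\A]$, i.e.\ a full disc around $\om$ lies in $\Si[\A\cap W]$ for every $L^1$-neighborhood $W$ of $B_0$; only this two-dimensional statement yields points with the same real part and larger/smaller imaginary part generated by structures $L^1$-close to $B_0$, and directly rules out (i)--(iv). Your proposal needs either to cite that result or to prove a genuine covering statement (e.g.\ a two-parameter perturbation plus a degree or implicit-function argument over convex combinations of the $V_j$); the Carath\'eodory choice of three directions by itself does not deliver it.

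A secondary point: your treatment of $\re\om=0$ is too quick. For $B_0=b_1$ the real-valued mode $\theta(\cdot,\om;b_1)$ works directly (its square is real, so $\B(\theta)=b_1$), but for $B_0=b_2$ one must rotate, $y=e^{\ii\pi/4}\theta$, and use (\ref{e theta real}) together with Lemma \ref{arg th with alf=0} (iii) (so that $\theta^2>0$ off at most one point) to get $\im y^2>0$ a.e.\ and hence $\B(y)=b_2$; saying that $b_{1,2}$ ``trivially satisfy (\ref{e B0=b1b2})'' skips exactly this phase-selection step, which is how the paper finishes that case.
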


The proof given below combines the results of Sections \ref{s Snl pert}-\ref{s 0fr} with the technique of local boundary points of $\Si [\A]$ introduced in \cite{Ka14}.

\subsection{Local boundary of $\Si [\A]$ and the proof of Theorem \ref{t nl ep}}
\label{ss proof nl}

\begin{defn}[cf. \cite{Ka14}] \label{d loc Bd}
Let  $B \in \A$ and $\om \in \Si (B)$. Then:

\item[(i)] the complex number $\om$ is called a \emph{$B$-local boundary point of the image} $\Si [\A]$ of $\A$ if $\om$
is a boundary point
of $\Si [\A \cap W]$ for a certain open neighborhood $W$ of $B$ in the norm topology of $L^1$.

\item[(ii)] $\om$ is called a \emph{$B$-local interior  point}  $\Si [\A]$ if $\om$ is an interior point
of the set $\Si [\A \cap W]$ for any open neighborhood $W$ of $B$ (in the norm topology of $L^1$).
\end{defn}

This definition is a particular case of \cite[Definition 4.1]{Ka14}. 
(In \cite{Ka14}, such $\om$ are called strongly local boundary points.)
Note that, if $B \in \A$, then a complex number $\om \in \Si (B)$ is a $B$-local interior  point of $\Si [\A]$ exactly when it is not a $B$-local boundary point of $\Si [\A]$.

The following statement is a particular case of \cite[Theorem 4.1 and Proposition 4.2]{Ka14}.

\begin{prop}[\cite{Ka14}] \label{p inter of image}
Let $B_0 \in \A$ and $\om \in \Si (B_0)$.
If $\cone \frac{\pa F (\om;B_0)}{\pa B}  [\A-B_0] = \CC$, then $\om $ is a $B_0$-local interior point of $ \Si  [\A]$.
\end{prop}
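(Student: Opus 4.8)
The plan is to prove the statement directly: for \emph{every} $L^1$-neighborhood $W$ of $B_0$ I will produce a disc around $\om$ contained in $\Si[\A\cap W]$. First I would reduce the hypothesis to a finite-dimensional surjectivity statement. By Lemma~\ref{l der F}, the functional $V\mapsto\frac{\pa F(\om;B_0)}{\pa B}(V)$ on $L^1$ is the integral functional $V\mapsto\frac{\om^2}{\pa_x\theta(a_2,\om;B_0)}\int_{a_1}^{a_2}\theta^2(s,\om;B_0)\,V(s)\,\dd s$; since a cone in $\CC\cong\RR^2$ generated by a set $S$ equals $\CC$ only if some \emph{finite} subset of $S$ already generates $\CC$, equivalently only if $0$ lies in the interior of the convex hull of that finite subset, I can fix $\widetilde B_1,\dots,\widetilde B_p\in\A$ (one may take $p=3$) with $0\in\Int\conv\{d_1,\dots,d_p\}$, where $V_k:=\widetilde B_k-B_0$ and $d_k:=\frac{\pa F(\om;B_0)}{\pa B}(V_k)$. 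Because $\A$ is convex, $B_{\mathbf t}:=B_0+\sum_kt_kV_k=(1-\sum_kt_k)B_0+\sum_kt_k\widetilde B_k\in\A$ for every $\mathbf t$ in the simplex $\Delta_\delta:=\{\mathbf t\in\RR^p:t_k\ge0,\ \sum_kt_k\le\delta\}$, $\delta\le1$, and $\|B_{\mathbf t}-B_0\|_1\le\delta\max_k\|V_k\|_1$, so $B_{\mathbf t}\in W$ once $\delta$ is small.

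Next I would analyze $\Phi_{\om'}(\mathbf t):=F(\om';B_{\mathbf t})$. By Lemma~\ref{l an}(i), and since $\mathbf t\mapsto B_{\mathbf t}$ is affine, $\Phi_{\om'}$ is real-analytic in $\mathbf t$ with derivatives bounded uniformly for $\om'$ near $\om$ and $\mathbf t$ in a fixed small ball, so Taylor's formula gives
\[
\Phi_{\om'}(\mathbf t)=q(\om')+\sum_{k=1}^{p}t_k\,d_k(\om')+O(|\mathbf t|^2),
\qquad q(\om'):=F(\om';B_0),\quad d_k(\om'):=\frac{\pa F(\om';B_0)}{\pa B}(V_k),
\]
uniformly in $\om'$. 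Here $q$ and the $d_k$ are continuous in $\om'$ (analyticity of $F$ and of $\theta$), $q(\om)=0$ because $\om\in\Si(B_0)$, and $d_k(\om)=d_k$. Because $0\in\Int\conv\{d_k\}$, the linear map $\mathbf t\mapsto\sum_kt_kd_k$ carries the interior of $\Delta_\delta$ onto an open neighborhood of $0$ of radius $\ge c_0\delta$ for some $c_0>0$, and the same holds with $c_0$ replaced by $c_0/2$ for $\mathbf t\mapsto\sum_kt_kd_k(\om')$ once $\om'$ is close enough to $\om$.

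To conclude I would upgrade this linear surjectivity to $\Phi_{\om'}$ itself by a quantitative open-mapping (Brouwer-degree) argument: restricting $\Phi_{\om'}$ to a two-dimensional affine slice of $\Delta_\delta$ through the point $\delta\mu$, where $\sum_k\mu_kd_k=0$, $\mu_k>0$, $\sum_k\mu_k=1$ — on which slice the linearization is a genuine bijection onto $\CC$ — and treating the $O(|\mathbf t|^2)$ term as a perturbation dominated once $\delta$ is small, one obtains constants $c>0$, $\delta_0>0$, $\eta_0>0$ with $\Phi_{\om'}(\Delta_\delta)\supset\DD_{c\delta}(q(\om'))$ whenever $0<\delta<\delta_0$ and $|\om'-\om|<\eta_0$. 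Fixing $\delta\in(0,\delta_0)$ small enough that also $B_{\mathbf t}\in W$ for all $\mathbf t\in\Delta_\delta$, and then choosing $\eta\in(0,\eta_0]$ with $|q(\om')|<c\delta$ for $|\om'-\om|<\eta$ (possible since $q$ is continuous with $q(\om)=0$), every such $\om'$ satisfies $0\in\DD_{c\delta}(q(\om'))\subset\Phi_{\om'}(\Delta_\delta)$, i.e.\ $F(\om';B_{\mathbf t})=0$ for some $\mathbf t\in\Delta_\delta$, so $\om'\in\Si(B_{\mathbf t})\subset\Si[\A\cap W]$. Hence $\DD_\eta(\om)\subset\Si[\A\cap W]$, and since $W$ was an arbitrary $L^1$-neighborhood of $B_0$, $\om$ is a $B_0$-local interior point of $\Si[\A]$.

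The hard part is precisely this quantitative open-mapping step: showing that a real-analytic map of the solid simplex $\Delta_\delta$ — a set with boundary and of dimension $\ge2$ — into $\CC$, whose differential at the vertex $\mathbf 0$ is surjective along the feasible cone $\{t_k\ge0\}$, covers a disc of radius comparable to $\delta$ about its value at $\mathbf 0$, and that this is uniform as the base point $\om'$ (hence the differential) varies. It is also where the \emph{full} hypothesis $\cone=\CC$ matters, as opposed to the weaker statement that $\om$ is merely a limit of reachable quasi-eigenvalues: it forces $0\in\Int\conv\{d_k\}$, so the first-order reachable directions surround $0$ and the conclusion becomes a genuine two-sided interior-point statement. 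Everything else — convexity of $\A$, the derivative formulas of Lemma~\ref{l der F}, and joint analyticity from Lemma~\ref{l an} — is routine.
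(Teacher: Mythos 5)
The paper itself does not prove this proposition: it is imported verbatim, with the remark that it is ``a particular case of \cite[Theorem 4.1 and Proposition 4.2]{Ka14}'', so there is no in-paper argument to compare against line by line. Your proof is a legitimate self-contained substitute, and it takes a different route from the one the cited source (and the rest of this paper's machinery) suggests: \cite{Ka14} works through perturbation asymptotics of the quasi-eigenvalues themselves --- Weierstrass preparation and Puiseux branches as in Proposition \ref{p per k mult}, choosing directions $V$ so that the leading terms $[K(\om,B_0;V)\zeta]^{1/m}$ sweep all directions around $\om$ --- whereas you freeze the spectral parameter $\om'$ and solve $F(\om';B_{\mathbf t})=0$ for the finite-dimensional structure parameter $\mathbf t$ by a quantitative Brouwer-degree argument. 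Your route buys independence from the multiplicity of $\om$ (no Puiseux analysis, only joint analyticity of $F$ from Lemma \ref{l an} and the derivative formula of Lemma \ref{l der F}), at the price of the uniform open-mapping estimate you correctly identify as the crux; that estimate is standard and your quantifier ordering ($c,\delta_0,\eta_0$ first, then $\delta$, then $\eta$) is consistent, since both the covered radius and the displacement of the center value scale linearly in $\delta$.

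Two small repairs are needed, neither of them a genuine gap. First, $\delta\mu$ with $\sum_k\mu_k=1$ lies on the facet $\{\sum_k t_k=\delta\}$ of $\Delta_\delta$, not in its interior; either recenter the slice at $(\delta/2)\mu$, or keep the slice inside that facet (the strict positivity of the $\mu_k$ gives room of size comparable to $\delta$, and the linearization restricted to directions with $\sum_k v_k=0$ is still onto $\CC$ because $0\in\Int\conv\{d_k\}$ forces the $d_k$ to be non-collinear). Second, the parenthetical ``one may take $p=3$'' needs the convexity of the image set $\frac{\pa F(\om;B_0)}{\pa B}[\A-B_0]$ (linearity of the derivative plus convexity of $\A$), since for a general positively spanning subset of $\RR^2$ one can only guarantee a spanning subset of four elements; alternatively just allow $p=4$, which changes nothing downstream.
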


\textbf{Consider the case $\om \not \in \ii \RR$.} Combining Proposition \ref{p inter of image} and Theorem \ref{t char Si nl} with the equivalence (\ref{e cone <=> cone}), one easily gets the following result.

\begin{thm} \label{t nl ep loc}
If $\re \om \neq 0$ and $\om$ is a $B_0$-local boundary point of $\Si [\A]$ for a certain $B_0 \in \A$, then $\om \in \Si^\nl$.
\end{thm}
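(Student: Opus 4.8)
The plan is to derive Theorem~\ref{t nl ep loc} directly from Proposition~\ref{p inter of image} and Theorem~\ref{t char Si nl}, with the cone equivalence~(\ref{e cone <=> cone}) serving as the bridge; since the substantive work is already contained in those two statements, no new estimates will be needed.

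First I would unwind the hypothesis. By Definition~\ref{d loc Bd}, saying that $\om$ is a $B_0$-local boundary point of $\Si[\A]$ already includes $B_0\in\A$ and $\om\in\Si(B_0)$, and, by the remark following that definition, it is equivalent to saying that $\om$ is \emph{not} a $B_0$-local interior point of $\Si[\A]$. Applying the contrapositive of Proposition~\ref{p inter of image} then gives
\[
\cone \frac{\pa F (\om;B_0)}{\pa B}  [\A-B_0] \neq \CC .
\]

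Next I would transfer this to the cone $\cone K(\om, B_0; \A - B_0)$ through the equivalence~(\ref{e cone <=> cone}). By (\ref{S_0}), the set $S_0$ equals $C_0\,\frac{\pa F (\om;B_0)}{\pa B}[\A-B_0]$ with $C_0 = \partial_x\theta(a_2,\om;B_0)/\om^2$. The only point that genuinely needs checking is $C_0\neq 0$: were $\partial_x\theta(a_2,\om;B_0)=0$, then (\ref{e F th}) together with $F(\om;B_0)=0$ would force $\theta(a_2,\om;B_0)=0$ as well, whence $\theta(\cdot,\om;B_0)\equiv 0$ by uniqueness for the Cauchy problem, contradicting $\theta(a_1,\om;B_0)=1$. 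Consequently $\cone S_0 = C_0\,\cone\frac{\pa F (\om;B_0)}{\pa B}[\A-B_0]\neq\CC$, and~(\ref{e cone <=> cone}) then yields $\cone K(\om, B_0; \A - B_0)\neq\CC$.

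Finally, since $\re\om\neq 0$ we have $\om\notin\ii\RR$, so with this $B_0$ condition~(ii) of Theorem~\ref{t char Si nl} is satisfied; hence condition~(i) holds, i.e.\ $\om$ is a nonlinear eigenvalue, $\om\in\Si^\nl$, and the concluding assertion of Theorem~\ref{t char Si nl} moreover supplies an eigenfunction $y$ of (\ref{e nonlin}), (\ref{e bc a-}), (\ref{e bc a+}) for which $B_0$ is given by (\ref{e B0=b1b2}). The main obstacle is therefore purely organizational --- correctly identifying $\cone\frac{\pa F}{\pa B}[\A-B_0]$, $\cone S_0$ and $\cone K(\om,B_0;\A-B_0)$ as nonzero rescalings of one another, which reduces to the nonvanishing of $C_0$ above; the rest is citation.
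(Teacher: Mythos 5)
Your argument is correct and is precisely the paper's own (one-line) proof spelled out: take the contrapositive of Proposition~\ref{p inter of image}, pass through the equivalence~(\ref{e cone <=> cone}), and invoke Theorem~\ref{t char Si nl} (ii)$\Rightarrow$(i), which applies since $\re\om\neq 0$. Your extra verification that $C_0=\pa_x\theta(a_2,\om;B_0)/\om^2\neq 0$ (so that $\cone S_0$ is a nonzero rescaling of $\cone\frac{\pa F(\om;B_0)}{\pa B}[\A-B_0]$) is a sound filling-in of a detail the paper leaves implicit.
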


Now we are able to obtain Theorem \ref{t nl ep} in the case $\re \om = 0$. Indeed,
in this case, each of conditions (i)-(iv) implies that $\om$ is a $B_0$-local boundary point of $\Si [\A]$ for a certain $B_0$. Theorem \ref{t nl ep loc} complete the proof.

\textbf{The proof of Theorem \ref{t nl ep} for $\om \in \ii \RR$}
follows from Proposition \ref{p loc extr al=0}. Indeed, it is enough to prove that (iv) implies $\om \in \Si^\nl$ (see Remark \ref{r loc opt}).  Assume that $(\om,B_0)$ is a local extremizer of $\im \om$ for the frequency $0$. By Proposition  \ref{p loc extr al=0}, $B_0$ coincides either with $b_1$, or with $b_2$.

Suppose $B_0 = b_2$. Then $y(x) = e^{\ii \pi /4} \theta (x,\om;b_2) $ is a mode associated with the quasi-eigenvalue $\om$ of $b_2$. By (\ref{e theta real}) and Lemma \ref{arg th with alf=0} (iii), $\theta^2 >0$ everywhere on $[a_1,a_2]$ except possibly one point.  Hence $y^2 (x) \in \CC_+$ a.e., and so $b_2  = \left[ b_1  + (b_2 - b_1) \ChiCpl (y^2 ) \right]$ a.e. This means that $y$ is an eigenfunction of (\ref{e nonlin}), (\ref{e bc a-}), (\ref{e bc a+}) associated with the nonlinear eigenvalue $\om$.

If $B_0 = b_1$, we can put $y(x) = e^{-\ii \pi /4} \theta (x,\om;b_1) $.
This completes the proof.

\section{5-D reduction with the use of bang-bang equations}
\label{s finding}

\subsection{Finding of optimizers from the nonlinear spectrum}
\label{ss find o from nl}

Assume that we have found the set $\Si^\nl$ of nonlinear eigenvalues. Then the quasi-eigenvalues of optimal decay can be calculated with the use of the following formula:
\begin{equation} \label{e b min =}
\beta_{\min} (\alpha) = \min \{ -\im \om \ : \ \om \in \Si^\nl \ \text{ and } \ \re \om = \alpha \} .
\end{equation}
(the minimum exists for each $\alpha \in \re \Si [\A]$).
This statement follows from (\ref{e Snl subset}) and Theorem \ref{t nl ep}.
As a by-product we get 
\begin{equation} \label{e ReA=Re nl}
\re  \Si [\A] = \re \Si^\nl.
\end{equation}

Roughly speaking, \emph{the nonlinear eigenproblem (\ref{e nonlin}), (\ref{e bc a-}), (\ref{e bc a+}) excludes unknown
optimal $B_0$} (in any sense described above) from the optimization problem if corresponding optimal $\om_0$ is known. In this case, $B_0$ can be recovered by one of eigenfunctions of the nonlinear problem. Rigorously this is formulated in the following result.

Let us define a map $y \mapsto \B (y)$ from $C [a_1,a_2]$ to $L_{\RR}^1 (a_1, a_2)$ by
\[
\B (y) (\cdot)  = b_1 (\cdot)  + (b_2 (\cdot)  - b_1 (\cdot)) \ChiCpl (y^2 (\cdot)) .
\]

\begin{cor} \label{e nl B}
(a) Assume that statement (i) or statement (ii) of Theorem \ref{t nl ep} hold.
Then $\om$ is a quasi-eigenvalue of $B_0 \in \A$ if and only if  $B_0= \B (y) $  a.e. on $(a_1,a_2)$ for a certain eigenfunction $y$ of (\ref{e nonlin}), (\ref{e bc a-}), (\ref{e bc a+}).
\\[2mm]
(b) If at least one of the statements (iii), (iv) of Theorem \ref{t nl ep} hold,
then $B_0= \B (y) $  a.e. on $(a_1,a_2)$ for
a certain eigenfunction $y$ of (\ref{e nonlin}), (\ref{e bc a-}), (\ref{e bc a+}).
\\[2mm]
(c) If $\re \om \neq 0$ and $\om $ is a $B_0$-local boundary point of $\Si [\A]$ for a certain $B_0 \in \A$,
then $B_0= \B (y) $  a.e. on $(a_1,a_2)$ for
a certain eigenfunction $y$ of (\ref{e nonlin}), (\ref{e bc a-}), (\ref{e bc a+}).
\end{cor}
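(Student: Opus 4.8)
The plan is to derive Corollary \ref{e nl B} as a direct consequence of the preceding theorems, with essentially no new computation, by carefully unwinding what has already been proven about the relationship between optimizers, local boundary points of $\Si[\A]$, and eigenfunctions of the nonlinear problem (\ref{e nonlin}), (\ref{e bc a-}), (\ref{e bc a+}). The key observation is that all four optimality hypotheses in Theorem \ref{t nl ep} were shown to imply $\om \in \Si^\nl$ \emph{through the last assertion of Theorem \ref{t char Si nl}} (for $\re\om \neq 0$, via Theorem \ref{t nl ep loc} and Proposition \ref{p inter of image}) or \emph{through the explicit construction in the proof of Theorem \ref{t nl ep} for $\om \in \ii\RR$} (where the eigenfunction is exhibited as $e^{\pm\ii\pi/4}\theta(\cdot,\om;b_j)$). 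In every case the argument produces not merely the existence of \emph{some} eigenfunction, but an eigenfunction $y$ with $\B(y) = B_0$ a.e.\ on $(a_1,a_2)$, where $B_0$ is the structure appearing in the hypothesis. This is exactly equality (\ref{e B0=b1b2}), restated as $B_0 = \B(y)$.

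For part (c) I would argue as follows. Since $\re\om\neq 0$ and $\om$ is a $B_0$-local boundary point of $\Si[\A]$, Proposition \ref{p inter of image} (in contrapositive) gives $\cone \frac{\pa F(\om;B_0)}{\pa B}[\A-B_0] \neq \CC$, hence by equivalence (\ref{e cone <=> cone}) we have $\cone K(\om,B_0;\A-B_0)\neq\CC$; moreover $\om\in\Si(B_0)$. Thus condition (ii) of Theorem \ref{t char Si nl} holds, and its last assertion yields an eigenfunction $y$ of the nonlinear problem with $B_0(x) = [b_1(x) + (b_2(x)-b_1(x))\ChiCpl(y^2(x))] = \B(y)(x)$ a.e.\ on $(a_1,a_2)$. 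This proves (c).

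For part (b): under hypothesis (iii) or (iv) of Theorem \ref{t nl ep}, the pair $(\om,B_0)$ is either a local extremizer for $\Dr$ or a local extremizer of $\im\om$ for the frequency $\re\om$. By Remark \ref{r loc opt} it suffices to treat the case where $(\om,B_0)$ is a local extremizer of $\im\om$ for $\re\om$. If $\re\om\neq 0$, then such $(\om,B_0)$ is in particular a $B_0$-local boundary point of $\Si[\A]$ (a local extremizer of $\im\om$ cannot lie in the interior of $\Si[\A\cap W]$ for small $W$, since interior points of the planar set admit perturbations in every direction, in particular increasing and decreasing $\im\om$ while keeping $\re\om$ fixed — this is exactly the reasoning already used in Section \ref{ss proof nl}), and part (c) applies. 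If $\re\om = 0$, I invoke Proposition \ref{p loc extr al=0} to conclude $B_0\equiv b_1$ or $B_0\equiv b_2$, and then the explicit eigenfunction $y = e^{\mp\ii\pi/4}\theta(\cdot,\om;b_j)$ constructed in the final part of the proof of Theorem \ref{t nl ep} satisfies $\B(y) = B_0$ a.e., using (\ref{e theta real}) and Lemma \ref{arg th with alf=0}(iii). This proves (b).

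For part (a), the forward direction (if $\om$ is a quasi-eigenvalue of $B_0\in\A$ then $B_0 = \B(y)$) is immediate from part (b) \emph{once we know} that under hypotheses (i) or (ii) every $B_0\in\A$ with $\om\in\Si(B_0)$ is in fact a local extremizer or a local boundary point: under (ii), $\om\in\Bd\Si[\A]$ forces $\om$ to be a $B_0$-local boundary point of $\Si[\A]$ for \emph{every} $B_0$ with $\om\in\Si(B_0)$ (a global boundary point is a fortiori a local one, for any representing $B_0$), so part (c) applies; under (i), $\om = \re\om - \ii\beta_{\min}(\re\om)$ lies on $\Bd\Si[\A]$ by minimality of the decay rate (no admissible quasi-eigenvalue has smaller $|\im|$ at that frequency, and $\Si[\A]\subset\CC_-$), so again $\om$ is a $B_0$-local boundary point for every representing $B_0$, and (c) applies. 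The converse direction of (a) (if $B_0 = \B(y)$ for an eigenfunction $y$ of the nonlinear problem, then $\om\in\Si(B_0)$) is the elementary observation already recorded around (\ref{e Snl subset}): substituting $B_0 = \B(y)$ into (\ref{e nonlin}) turns it into the linear equation (\ref{e ep}) with coefficient $B_0$, and $y$ satisfies the boundary conditions (\ref{e bc a-}), (\ref{e bc a+}), so $\om\in\Si(B_0)$.

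I do not anticipate a genuine obstacle here; the only point requiring a little care is the passage, in part (b) with $\re\om\neq 0$, from ``local extremizer of $\im\om$ for a fixed frequency'' to ``$B_0$-local boundary point of $\Si[\A]$'' — I would state this cleanly, noting that if $\om$ were a $B_0$-local interior point, then $\Si[\A\cap W]$ would contain a full neighborhood of $\om$ for small $W$, contradicting that $\im\om$ is extremal along the vertical line $\re z = \re\om$ within $\Si[\A\cap W']$ for every smaller $W'$. This is the same device used implicitly in Section \ref{ss proof nl} and in \cite{Ka14}, so I would simply cite it rather than re-derive it.
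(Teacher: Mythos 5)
Your argument follows the paper's own proof almost step for step: part (c) via Proposition \ref{p inter of image}, the equivalence (\ref{e cone <=> cone}), and the last assertion of Theorem \ref{t char Si nl}; part (b) by reducing to (c) when $\re\om\neq 0$ (local extremality along the line $\re z=\re\om$ rules out $\om$ being a $B_0$-local interior point) and, for $\om\in\ii\RR$, by Proposition \ref{p loc extr al=0} together with the explicit modes $e^{\pm\ii\pi/4}\theta(\cdot,\om;b_j)$, using (\ref{e theta real}) and Lemma \ref{arg th with alf=0}(iii); and part (a) by passing through $\Bd\Si[\A]$, local boundary points, and (c), with the converse being the substitution $B_0=\B(y)$ into (\ref{e nonlin}). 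All of this is sound and is the route the paper takes.

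The one step that fails as written is in part (a) under hypothesis (i) when $\re\om=0$: a quasi-eigenvalue of minimal decay can be pure imaginary (this is precisely the situation of Section \ref{s 0fr} and Corollary \ref{c const b12 0 adm}), and there you invoke part (c), whose hypothesis is $\re\om\neq 0$; the machinery behind (c), namely Theorem \ref{t char Si nl}, is genuinely restricted to $\om\notin\ii\RR$, so this case is not covered by your boundary-point argument. The repair is one line and is exactly what the paper does: if $\om=-\ii\beta_{\min}(0)\in\Si(B_0)$, then $(\om,B_0)$ is a local maximizer of $\im\om$ for the frequency $0$ (Corollary \ref{c exist}(ii), or directly from the definitions), so hypothesis (iv) of Theorem \ref{t nl ep} holds and your own zero-frequency treatment in part (b) yields $B_0=\B(y)$ with $B_0\in\{b_1,b_2\}$. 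With that adjustment your proof coincides with the paper's.
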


\begin{proof}
\textbf{(c)}  By Proposition \ref{p inter of image}, $\cone \frac{\pa F (\om_0;B_0)}{\pa B}  [\A-B_0] \neq \CC$.
Combining this and Theorem \ref{t char Si nl}, one obtains the desired statement.

In the case $\re \om \neq 0 $, \textbf{(b)} follows from \textbf{(c)}. In the case $\re \om = 0$,
\textbf{(b)} easily follows from the arguments at the end of Section \ref{ss proof nl}. Note that in the latter case, $B_0$ coincides either with $b_1$, or with $b_2$.

\textbf{(a)} The implication 'if' is obvious. Let us prove the part 'only if' in the case  $\re \om \neq 0 $. Assume that $\om$ is a quasi-eigenvalue of $B_0 \in \A$. Since $\om \in \Bd \Si [\A]$,  it is a $B_0$-local boundary point of $\Si [\A]$. Thus, (c) implies the desired statement.
In the case  when $\re \om = 0 $ and $\om$ is of minimal decay, the implication 'only if' follows from (b). 
\end{proof}

\subsection{Nonlinear eigenvalues that are not of minimal decay}

\begin{rem} \label{r B=b1b2}
If $B_0$ is optimal  in one of the senses of Corollary \ref{e nl B}, then $B_0$ is an extreme point of $\A$.
More precisely, Corollary \ref{e nl B} implies that
after a possible correction of $B_0$ on a set of measure zero,
$B_0 (x)$ satisfies at least one of the equalities $B_0 (x) = b_1 (x)$, $B_0 (x) = b_2 (x)$
at every $x \in (a_1,a_2)$. 
If, additionally, $b_1$ and $b_2$ are piecewise constant with a finite number 
of intervals of constancy (see \cite{Ka13} for the precise formulation in 1-side open case), 
then $B_0$ is so.  
The last  statement follows from Lemma \ref{l turn int} (v).
(For optical problems this means that $B_0$ represents a multilayer resonator 
with a finite number of layers.)
\end{rem}

\begin{prop} \label{p int points}
Let $\re \om \neq 0$ and $\om \in \Si (B_0)$ for a certain $B_0 \in \A$ that is not an extreme point of $\A$. Then $\om$ is an interior point of $\Si [\A]$. 
\end{prop}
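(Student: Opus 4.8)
The plan is to show that the hypothesis "$B_0$ is not an extreme point of $\A$" forces the cone $\cone \frac{\pa F(\om;B_0)}{\pa B}[\A - B_0]$ to be all of $\CC$, and then invoke Proposition \ref{p inter of image} to conclude that $\om$ is a $B_0$-local interior point, hence an interior point of $\Si[\A]$. By the equivalence (\ref{e cone <=> cone}), it suffices to show $\cone S_0 = \CC$, where $S_0 = C_0 \frac{\pa F(\om;B_0)}{\pa B}[\A - B_0] = \left\{ \int_{a_1}^{a_2} \theta^2(s,\om;B_0)\, V(s)\, \dd s \ :\ V \in \A - B_0 \right\}$ and $\theta(\cdot) = \theta(\cdot,\om;B_0)$. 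Without loss of generality assume $\alpha = \re\om > 0$ (using Lemma \ref{l prop Si}(ii) and the symmetry of $\Si[\A]$ w.r.t. $\ii\RR$).

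First I would argue by contradiction: suppose $\cone S_0 \neq \CC$. Then, since $\cone S_0$ is a convex cone in $\CC$ that is not all of $\CC$, it is contained in some closed half-plane $e^{\ii\xi_*}\overline{\CC_-}$ with $\xi_* \in [-\pi,\pi)$. By Lemma \ref{l B via hp}(i), this forces equality (\ref{e B*=b1b2}) to hold for a.a.\ $x$; that is, $B_0(x) = b_1(x)$ for a.a.\ $x$ with $\theta^2(x) \in e^{\ii\xi_*}\overline{\CC_-}$, and $B_0(x) = b_2(x)$ for a.a.\ $x$ with $\theta^2(x) \in e^{\ii\xi_*}\CC_+$. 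Thus, after correction on a null set, $B_0(x) \in \{b_1(x), b_2(x)\}$ at (almost) every $x$.

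The main obstacle — and the crux of the argument — is to deduce from this that $B_0$ is an extreme point of $\A$, contradicting the hypothesis. The point is that $\A$ is the order interval $\{B : b_1 \le B \le b_2 \text{ a.e.}\}$ in $L^1_\RR(a_1,a_2)$, whose extreme points are exactly those $B$ for which $B(x) \in \{b_1(x), b_2(x)\}$ a.e.; indeed, if $B_0$ were a nontrivial convex combination $B_0 = \tfrac12(B' + B'')$ with $B', B'' \in \A$ distinct, then on a set of positive measure we would need $b_1(x) < B_0(x) < b_2(x)$, which is impossible when $B_0(x) \in \{b_1(x), b_2(x)\}$ a.e. I would write this elementary convexity argument out carefully, taking care that $b_1$ and $b_2$ may coincide on a positive-measure set (on which every admissible $B$ equals the common value anyway, so extremality is automatic there). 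This contradicts the assumption that $B_0$ is not extreme, so $\cone S_0 = \CC$, and Proposition \ref{p inter of image} together with (\ref{e cone <=> cone}) finishes the proof: $\om$ is a $B_0$-local interior point of $\Si[\A]$, in particular an interior point of $\Si[\A]$.

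Wait — I should double-check one subtlety in the half-plane step: Lemma \ref{l B via hp} is stated for the hypothesis that $\cone S_0$ is contained in a closed half-plane, which is exactly what "$\cone S_0 \neq \CC$, convex cone" gives (a proper convex cone in the plane lies in a half-plane). So no gap there. The only real work is the extreme-point characterization of the order interval, which is standard but worth a sentence or two; everything else is a direct citation of Proposition \ref{p inter of image}, Theorem \ref{t char Si nl}, and the equivalence (\ref{e cone <=> cone}).
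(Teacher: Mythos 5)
Your proposal is correct and follows essentially the same route as the paper, which proves the proposition by combining Proposition \ref{p inter of image} with Theorem \ref{t char Si nl} and its proof (in particular Lemma \ref{l B via hp}): if the cone were proper, $B_0$ would be of bang-bang form and hence an extreme point of $\A$, contradicting the hypothesis. Your explicit treatment of the half-plane reduction, the extreme-point characterization of the order interval, and the reduction to $\re\om>0$ simply spells out steps the paper leaves implicit.
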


The proposition follows from the combination of Proposition \ref{p inter of image} 
with Theorem \ref{t char Si nl} and its proof.

\begin{rem} \label{r interior}
Applying Proposition \ref{p int points} in the settings of Theorems \ref{t small diel contrast} and 
\ref{t sm contr gen}, one can see that if $\om_{0,j} \not \in \ii \RR$, than the corresponding path connected component $\La_j$ (of the set $\Si [\A]$) has a nonempty interior.
\end{rem}

\begin{ex} \label{ex interior}
Let $B_0 \equiv b \in \RR$, $b \neq \nu_{1,2}^2$, and let $\om_{0,j}$ be equal to $\om_j (b)$ of (\ref{e om n hom}) for $1 \le j \le n$.  Let $\ep$ and $b_{1,2}$ satisfy statement (1) of Theorem \ref{t sm contr gen}. Then \emph{there exist $\om \in \Si^\nl \setminus \ii \RR$ that is not a quasi-eigenvalue of minimal decay.}

Indeed, by Remark \ref{r interior}, the interiors $\Int \La_j$ of $\La_j$ are not empty. So for any $\alpha$ from their real projection $ \re [\Int \La_j]$, Theorem  \ref{t sm contr gen} and its proof imply the existence of a local minimizer $(\om,B)$ of $\im \om$ such that $\om$ is not of minimal decay for $\alpha$
(see Fig.~1 in Section \ref{s opt problem})
\end{ex}

\subsection{Nonlinear eigenvalues as zeros of a function of three variables}
\label{ss Sinl=0}

\begin{thm} \label{t uni B}
Assume that $b_{1,2}$ satisfy (\ref{e b12 cond}) and
\begin{equation} \label{e a b1>0}
b_1 (x) > 0 \  \text{ a.e. on the set $E$ (recall that $E= \{ x \ : \ b_1 (x) \neq b_2 (x) \}$).} 
\end{equation}
Let $a \in [a_1,a_2]$ and $c_0, c_1 \in \CC$. Then there exists a unique solution $y$ to the problem
\begin{equation} \label{e ini cond}
y (a) = c_0 , \qquad y'(a) = c_1 
\end{equation}
for the nonlinear equation 
\begin{equation} \label{e nonlin z}
y'' (x)  = - z^2 y (x) \left[ b_1 (x) + [b_2(x) - b_1(x)] \ChiCpl (y^2 (x) ) \right] .
\end{equation}
\end{thm}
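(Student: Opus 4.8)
The plan is to reformulate the problem as a fixed-point equation for an integral operator and to prove it is a contraction on $C[a,a_2]$ (and, symmetrically, on $C[a_1,a]$), using assumption (\ref{e a b1>0}) to control the discontinuity in the nonlinearity. First I would rewrite (\ref{e nonlin z}), (\ref{e ini cond}) as the integral equation
\[
y(x) = c_0 + c_1 (x-a) - z^2 \int_a^x (x-s)\, y(s) \, \B_z(y)(s) \, \dd s,
\]
where $\B_z(y)(s) := b_1(s) + [b_2(s)-b_1(s)] \ChiCpl (y^2(s))$; a solution of the nonlinear ODE with the prescribed Cauchy data at $a$ is exactly a continuous solution of this equation, since $\B_z(y)(\cdot) \in L^1$ whenever $y \in C$. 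Existence on a small interval, and then on all of $[a_1,a_2]$ by the usual continuation argument, is not the issue — the standard Picard-type estimates go through because $\|\B_z(y)\|_1 \le \|b_2\|_1$ is bounded independently of $y$. The delicate point is \textbf{uniqueness}, because $y \mapsto \ChiCpl(y^2)$ is discontinuous and \emph{not} Lipschitz, so the map $y \mapsto \B_z(y)$ is only bounded, not Lipschitz, and the naive contraction estimate fails.

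To handle this, the key observation is that $\ChiCpl(y^2(x))$ can only change value where $y(x)^2 \in \RR$, i.e. where $y(x)$ is real or purely imaginary, and at such points $y(x) \ne 0$ unless $c_0 = c_1 = 0$ (the trivial case, where uniqueness is immediate). Suppose $y_1, y_2$ are two solutions with the same Cauchy data at $a$. On the \emph{open} set where both $y_1^2$ and $y_2^2$ lie in $\CC_+$, or both in the open lower half-plane, $\B_z(y_1) = \B_z(y_2)$ pointwise, and the difference $w = y_1 - y_2$ satisfies a linear Volterra equation giving a Gronwall bound. The real work is to show that the "bad" set — where $\ChiCpl(y_1^2) \ne \ChiCpl(y_2^2)$ — has measure zero as long as $w$ is small, which is where (\ref{e a b1>0}) enters: on $E$ the coefficient is nondegenerate, $b_1 > 0$ a.e., so on $E$ we are in the situation of Lemma \ref{l turn int}(ii) (the turning interval degenerates, $x_* = x^*$), hence for a genuine quasi-mode the set $\{x : y_j^2(x) \in \RR\}$ is at most finite on $E$; off $E$ the two coefficients coincide ($b_1 = b_2$) so $\B_z(y_1) = \B_z(y_2)$ there regardless. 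Thus the contribution of the bad set to $\int_a^x (x-s)(y_1 \B_z(y_1) - y_2 \B_z(y_2))\dd s$ is controlled by $\int_{\text{bad set}} |y_1 - y_2| + (|y_2|)(b_2-b_1)$ over a set of small measure, and one closes a Gronwall-type inequality for $\|w\|_{C[a,x]}$.

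Concretely, the steps I would carry out are: (1) set up the integral equation and note equivalence with (\ref{e nonlin z})+(\ref{e ini cond}); (2) prove existence on $[a_1,a_2]$ by a fixed-point argument using only boundedness of $\B_z$, or alternatively by a compactness argument as in Lemma \ref{l weak cont}; (3) reduce uniqueness to the trivial case at points where $c_0=c_1=0$, and otherwise observe $y_j \not\equiv 0$ near $a$; (4) establish that, for any solution, $\{x \in E : y^2(x) \in \RR\}$ is finite, by running the argument of Lemma \ref{l turn int}(iii)-(v) (a zero of $y$ forces $B=0$ on the turning interval, contradicting $b_1>0$ on $E$), so $\meas\{x: \ChiCpl(y_1^2(x)) \ne \ChiCpl(y_2^2(x))\}$ is small once $\|y_1-y_2\|_C$ is small; (5) write the Volterra difference equation for $w=y_1-y_2$, split the integral over the "good" set (where a clean Lipschitz/Gronwall bound applies) and the "bad" set (where the bound uses smallness of measure plus $\|w\|_\infty$ plus $\|b_2-b_1\|_1$), and conclude $w\equiv 0$ first on a short interval, then globally by continuation. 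The main obstacle is step (4)–(5): carefully quantifying how the measure of the disagreement set shrinks with $\|w\|_{C}$ and ensuring this beats the lack of Lipschitz continuity — this is exactly where the hypothesis $b_1 > 0$ a.e. on $E$ is indispensable, and is presumably what is worked out in detail in Appendix \ref{ss uni nl}.
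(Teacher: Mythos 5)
There is a genuine gap, and it sits exactly at the point your argument treats as a small perturbation: the branch selection at the initial point itself. If $c_0^2\in\RR$ (or $c_0=0$ and $c_1^2\in\RR$) --- cases the theorem must cover --- then every solution starts on the discontinuity set of $\ChiCpl$, and nothing in your steps (4)--(5) prevents $y_1^2$ from entering $\CC_+$ while $y_2^2$ stays in $\overline{\CC_-}$ immediately to the right of $a$. In that scenario the ``bad set'' is a full one-sided neighbourhood of $a$, its measure does not shrink with $\|y_1-y_2\|_C$, and the extra term $\int_D |y_2|\,(b_2-b_1)\,\dd s$ in your Volterra estimate is not $o(\|w\|_C)$, so the Gronwall inequality does not close; the paper's nonuniqueness example with $b_1=0$ shows that this branching is precisely the failure mode one must rule out. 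Your step (4) imports the finiteness of $\{x\in E : y^2(x)\in\RR\}$ from Lemma \ref{l turn int}, but that lemma concerns modes satisfying the radiation conditions at \emph{both} endpoints with $\re\om>0$; for an arbitrary Cauchy solution with arbitrary $z$ it is unavailable, and it is false outright when $z^2\in\RR$ and $c_0,c_1\in\RR$, where a solution has $y^2\in\RR$ on all of $[a_1,a_2]$. Finally, existence is not ``not the issue'': the map $y\mapsto y\,\B(y)$ is discontinuous on $C[a_1,a_2]$, so neither Picard iteration nor a Schauder fixed point applies, and a compactness argument in the spirit of Lemma \ref{l weak cont} fails because $\ChiCpl(y_n^2)$ need not converge to $\ChiCpl(y^2)$ along a uniformly convergent sequence.

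The paper's proof in Appendix \ref{ss uni nl} is the opposite of a perturbative one: after reducing to $z^2\in\overline{\CC_-}$, it performs a case analysis on $(c_0,c_1,z^2)$ (eight cases) showing that \emph{every} solution has the same coefficient $\B(y)=B_0$ a.e.\ on a small one-sided neighbourhood of $a$: directly from the data when $\im y^2$ or $\im(y^2)'$ at $a$ is nonzero, by reality of $y$ when $z^2\in\RR$, and in the delicate cases ($c_0^2\in\RR\setminus\{0\}$ or $c_0=0$, with $\im z^2<0$) by a rotation argument ($\pa_x\arg_* y>0$ near $a$, which is exactly where $b_1>0$ a.e.\ on $E$ enters) or a quadrant argument for $y'$. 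This pins down the branch, reduces the problem locally to the linear equation $y''=-z^2B_0y$ (hence local uniqueness), yields existence constructively by the same algorithm, and the solution is then continued over $[a_1,a_2]$ using $|\B(y)|\le b_2\in L^1$ and Bellman--Gronwall to show that $y,y'$ have one-sided limits. If you wish to salvage your Gronwall framework, you would still need to prove this branch-determination statement at $a$, which is the real content of the paper's argument rather than a technicality to be quantified away.
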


The proof includes a technical part that considers a number of cases for placing of $c_{0,1}$ in $\CC$. It is postponed to Appendix \ref{ss uni nl}.

\begin{ex}[Nonuniqueness] If assumption (\ref{e a b1>0}) is dropped, then the statement of Theorem \ref{t uni B} is incorrect. To see this, consider the case when $b_{1,2}$ are constants and $0= b_1< b_2$. Let $z^2 \in \CC_-$, $a=a_1$, $c_0>0$, and $c_1 = 0$. Then, for small enough positive $\ep$, the functions 
$y_\ep$ defined by $y_\ep (x) = c_0$ for $x \in [a_1,a_2 - \ep]$ and by $y_\ep (x) = 
c_0 \cos (z b_2 [x-a_2+\ep])$ for $x \in [a_2-\ep,a_2]$ are solutions to (\ref{e ini cond}), (\ref{e nonlin z}) on $[a_1,a_2]$.
\end{ex}

We assume (\ref{e a b1>0}) in the rest of this section. Assuming also $\xi \in (0,\pi]$, let us define the function $\Theta (x;\xi,z)$ as the solution to 
(\ref{e nonlin z})  satisfying the initial conditions 
\begin{equation}
\Theta (a_1;\xi,z) = e^{\ii \xi},  \ \ \  \pa_x \Theta (a_1;\xi,z) = -\ii z \nu_1 e^{\ii \xi} . \label{e Th}
\end{equation}
Note that $\Theta$ satisfies (\ref{e bc a-})  for any $\xi$ (with $\om =z$).
Plugging $\Theta$ into the second boundary condition (\ref{e bc a+}), let us introduce the complex-valued function  
\[
F_\nl (\xi,z) := \Theta (a_2;\xi,z) + \frac{\ii \pa_x \Theta (a_2;\xi,z)}{z\nu_2}   \qquad \text{ for } z \in \CC \setminus {0}, 
\]
and $F_\nl (\xi,0) := 1 + \frac{\nu_1}{\nu_2}$ (the logic of this definition is the same as in Section \ref{ss F}).

The following corollary essentially reduce the problem of finding of $\Si^\nl$ (and so, of finding the function $\beta_{\min}$) to a  finite-dimensional question of finding of zeros for a function of three real variables.  
Consider the fiber of the function $F_\nl$ over $0$,
\[
F_\nl^{-1} (0) := \{(\xi,z) \in (0,\pi] \times \CC \ : \ F_\nl (\xi,z)  = 0\} ,
\]
and the projection of $F_\nl^{-1} (0)$ to the $z$-plane
\[
\pr_z F_\nl^{-1} (0)  := \{ z \in \CC \ : \ F_\nl (\xi,z) = 0 \text{ for certain } \xi \in (0,\pi] \} .
\]

\begin{cor} \label{c Si nl =}
Suppose (\ref{e a b1>0}). Then
$\Si^\nl = \pr_z F_\nl^{-1} (0)$.
\end{cor}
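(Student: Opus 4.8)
\textbf{Proof plan for Corollary \ref{c Si nl =}.}

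The plan is to show the two inclusions $\Si^\nl \subset \pr_z F_\nl^{-1}(0)$ and $\pr_z F_\nl^{-1}(0) \subset \Si^\nl$ by relating an arbitrary nonlinear eigenfunction to one of the normalized solutions $\Theta(\cdot;\xi,z)$. The key point is the geometric multiplicity observation already used for linear quasi-eigenvalues: since equation (\ref{e nonlin z}) together with the first boundary condition (\ref{e bc a-}) is, after fixing the branch structure, a second-order linear ODE with a prescribed initial ratio $y'(a_1)/y(a_1) = -\ii z \nu_1$, any nontrivial solution satisfying (\ref{e bc a-}) is determined up to a multiplicative constant — provided the initial value $y(a_1)$ is nonzero. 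Under assumption (\ref{e a b1>0}) the uniqueness Theorem \ref{t uni B} applies, so for given $(c_0,c_1)$ there is exactly one solution of the nonlinear Cauchy problem; moreover if $y$ solves (\ref{e nonlin z}) then so does $\lambda y$ for $\lambda \in \CC$, because $\ChiCpl((\lambda y)^2) = \ChiCpl(\lambda^2 y^2)$ depends on $\lambda$ only through $\arg \lambda^2$, i.e.\ $\lambda y$ solves the \emph{same} bang-bang equation when $\lambda^2 > 0$, and for general $\lambda$ it solves the equation with $\ChiCpl(\cdot)$ replaced by $\ChiCpl(e^{-\ii \arg \lambda^2}\,\cdot\,)$. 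This is exactly the rotational freedom encoded in the parameter $\xi$ of $\Theta$.

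For the inclusion $\Si^\nl \subset \pr_z F_\nl^{-1}(0)$: let $\om \in \Si^\nl$ with eigenfunction $y$ of (\ref{e nonlin}), (\ref{e bc a-}), (\ref{e bc a+}). First I would argue $y(a_1) \neq 0$. If $y(a_1) = 0$ then (\ref{e bc a-}) gives $y'(a_1) = 0$ as well (when $\nu_1 \neq 0$ this is immediate; when $\nu_1 = 0$ the boundary condition already reads $y'(a_1)=0$, and then $y(a_1)=0$ would force $y\equiv 0$ by uniqueness). So $y(a_1) = 0$ implies $y \equiv 0$ by Theorem \ref{t uni B}, a contradiction; hence $y(a_1) \neq 0$. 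Write $y(a_1) = r e^{\ii \xi_0}$ with $r>0$; choosing $\xi \in (0,\pi]$ with $\xi \equiv \xi_0 \pmod \pi$ and setting $\lambda := e^{\ii\xi}/y(a_1)$, the function $\lambda y$ has $\im(\lambda y)^2$ of the same sign as $\im y^2$ at every point (since $\lambda^2 > 0$), so $\lambda y$ solves the same bang-bang equation (\ref{e nonlin z}) with $z=\om$, satisfies (\ref{e bc a-}), and has initial data $(\lambda y)(a_1) = e^{\ii\xi}$, $(\lambda y)'(a_1) = -\ii\om\nu_1 e^{\ii\xi}$. By the uniqueness statement of Theorem \ref{t uni B}, $\lambda y = \Theta(\cdot;\xi,\om)$. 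Since $y$, hence $\Theta(\cdot;\xi,\om)$, satisfies (\ref{e bc a+}), we get $F_\nl(\xi,\om) = 0$, so $\om \in \pr_z F_\nl^{-1}(0)$. The converse inclusion is the easy direction: if $F_\nl(\xi,z) = 0$ for some $\xi \in (0,\pi]$, then $\Theta(\cdot;\xi,z)$ is by construction a nontrivial ($\Theta(a_1;\xi,z) = e^{\ii\xi}\neq 0$) solution of (\ref{e nonlin z}) meeting (\ref{e bc a-}), and the vanishing of $F_\nl$ is precisely (\ref{e bc a+}) rewritten via (\ref{e F th}); hence $z \in \Si^\nl$.

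The step I expect to require the most care is the reduction modulo $\pi$: one must check that restricting $\xi$ to the half-open interval $(0,\pi]$ rather than, say, $(-\pi,\pi]$ does not lose solutions. This works because replacing $e^{\ii\xi}$ by $-e^{\ii\xi} = e^{\ii(\xi\pm\pi)}$ multiplies $\Theta$ by $-1$, which does not change $\Theta^2$ and therefore leaves the bang-bang coefficient, both boundary conditions, and $F_\nl$ invariant — so $F_\nl(\xi+\pi,z) = -F_\nl(\xi,z)$ has the same zero set in $z$. A minor related point to verify is that $\xi = 0$ need not be included: $y(a_1)$ real and positive is covered by $\xi = \pi$ after the sign flip, and in any case $\xi=0$ would give the same $z$-fiber. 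One should also note explicitly that assumption (\ref{e a b1>0}) enters only through Theorem \ref{t uni B}, which is the sole place uniqueness of the nonlinear Cauchy problem is invoked; without it the identification $\lambda y = \Theta$ can fail, as the nonuniqueness example preceding this corollary shows.
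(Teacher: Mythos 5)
Your proposal is correct and follows essentially the same route as the paper's proof: establish $y(a_1)\neq 0$ from (\ref{e bc a-}) and Theorem \ref{t uni B}, rescale $y$ by a real constant (positive factor plus possible sign flip, which leaves $y^2$ and hence the bang-bang coefficient unchanged) so that its value at $a_1$ is $e^{\ii\xi}$ with $\xi$ in the half-period interval, identify the rescaled function with $\Theta(\cdot;\xi,\om)$ by the uniqueness theorem, and read off $F_\nl(\xi,\om)=0$, the converse inclusion being immediate. Your extra care about the interval for $\xi$ (you use $(0,\pi]$, matching the definition of $\Theta$, while the paper's proof writes $[0,\pi)$) only tidies up a harmless inconsistency in the paper and does not change the argument.
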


\begin{proof} The inclusion $\supset$ is obvious. Let us prove $\subset$.
Assume that $\om \in \Si^\nl$ and $y$ is an associated eigenfunction of (\ref{e nonlin}), 
(\ref{e bc a-}), (\ref{e bc a+}). 
Then (\ref{e bc a-}) and Theorem  \ref{t uni B} imply $y(a_1) \neq 0$. Consider two functions $y_\pm (x):= \pm y(x) / |y(a_1)|$. They are also eigenfunctions of (\ref{e nonlin}), (\ref{e bc a-}), (\ref{e bc a+}). (Note that $\B (y) = \B (y_-) = \B (y_+)$). 
For one of the functions $y_\pm$ (let us denote it by $y_1$), the equality  
$y_1 (a_1) = e^{\ii \xi}$ holds with certain $\xi \in [0,\pi)$. By Theorem  \ref{t uni B},
this function coincide with $\Theta (\cdot;\xi,\om)$. So $\Theta (\cdot;\xi,\om)$ is an 
eigenfunction of (\ref{e nonlin}), (\ref{e bc a-}), (\ref{e bc a+}). Thus, (\ref{e bc a+})
yields $F_\nl (\xi,\om) = 0$. 
\end{proof}

For the particular case considered in Example \ref{ex 1-side D} and $\om \not \in \ii \RR$, Corollary \ref{c Si nl =} was announced without proof in \cite{Ka13_Opt}.

\section{A numerical experiment for constant side constraints}
\label{s NumEx} 

To check applicability of the method of Section \ref{ss Sinl=0} to calculation of quasi-eigenvalues of minimal decay, we take $\nu_1 = 1$, $\nu_2 =+\infty$, $a_1=-1$, $a_2 = 0$, and the constant functions $b_1 = 90$, $b_2 = 110$.
Then $F_\nl (\xi,z) = \Theta (0; \xi, z)$ and 
$\Si^\nl = \{ z \in \CC_- \ : \ F_\nl (\xi,z) = 0 \text{ for certain } \xi \in (0,\pi] \}$.

To find $\Si^\nl$ numerically, we take small $\ep$ and approximate the sub-level set 
$
\Z_\nl (\ep) :=  \{ z \in \CC \ : \ \inf_{\xi \in (0,\pi] } |F_\nl (\xi )| \le \ep \}
$
by its discrete versions  
$\Z (h_R,h_I,h_{\arg}; \ep)$ consisting of numbers $z = z_1 + \ii z_2$ with $z_1 \in h_R \ZZ$, $z_2 \in h_I \ZZ$, such that 
$\max_{1 \le n \le N} |F_\nl (n h_{\arg}, z)| \le \ep$. The angular step $h_{\arg}$ is connected with $N$ by $h_{\arg} = \pi / N$.

The value of $F_\nl (\xi,z) = \Theta (0; \xi, z)$ 
is computed by the shooting method
applied to the solution $\Theta$ of the equation 
\begin{equation} \label{e nleq const}
y'' (x)  = - z^2 y(x)  \left[ b_1 + [b_2 - b_1] \ChiCpl (y^2 (x) ) \right] .
\end{equation}
This does not require finite-difference approximation. 
Indeed, for $z^2 \in \CC_-$, the equation (\ref{e nleq const}) has constant 
coefficients on each interval where $\Theta^2$ stays in 
one of half-planes $\CC_\pm$ and admit an explicit analytic solution depending only on the initial 
values in the left end of this interval. 
These values and the lengths of intervals of constancy 
can be computed iteratively starting from initial conditions (\ref{e Th}).

The result of such a computation of $\Z_\nl (\ep)$ with the level $\ep = 5 \times 10^{-5}$  in the domain $\D = \{\re z \in (0, 1.2) , \ \im z \in (-0.015, 0) \}$ are plotted in Figure \ref{f Snl} (a). We take $N = 360$. The other step parameters are changed through $\D$ from 
$(h_R, h_I) = (2.4 \times 10^{-4}, 5.5 \times 10^{-5})$ to 
$(h_R, h_I) = (2.7 \times 10^{-3}, 2.3 \times 10^{-4})$ to 
provide better resolution for the regions where $\Z_\nl (\ep)$ is 
concentrated and to reduce computation time in the regions where 
the values of function $\M (z) := \max_{1 \le n \le N} |F_\nl (n h_{\arg}, z)| $ 
are essentially greater than $\ep$.

Comparing the shape of this approximation of $\Si^\nl \cap \D$ with 
the fact that $\Bd \Si [\A] \setminus \ii \RR \subset \Si^\nl$ (see Theorem \ref{t nl ep} (ii)) and Theorems \ref{t small diel contrast}-\ref{t sm contr gen}, we conclude that the considered example fits into the description of the small contrast case at least in the domain $\D$, and that $\Z_\nl (\ep)$ provides an approximate shape not only for $\Si^\nl \cap \D$, but also for $\Bd \Si [\A] \cap \D$.
The form of  $\Bd \Si [\A] \cap \D$ together with Theorem \ref{t sm contr gen} makes it possible to 
find approximately the part $\Si [\A] \cap \D$ of the set of admissible resonances, which consists of 
four quasi-eigenvalue 'clouds' $\La_1$, \dots, $\La_4$ (see Remark \ref{r drawing}).

\begin{figure}[h]

\begin{tabbing}
\hspace{0.58\linewidth} \= \hspace{0.38 \linewidth} \kill
 \footnotesize  (a)   \> \footnotesize (b) \\
 \begin{minipage}[t]{0.58\linewidth}  
   \includegraphics[width=0.99\linewidth]{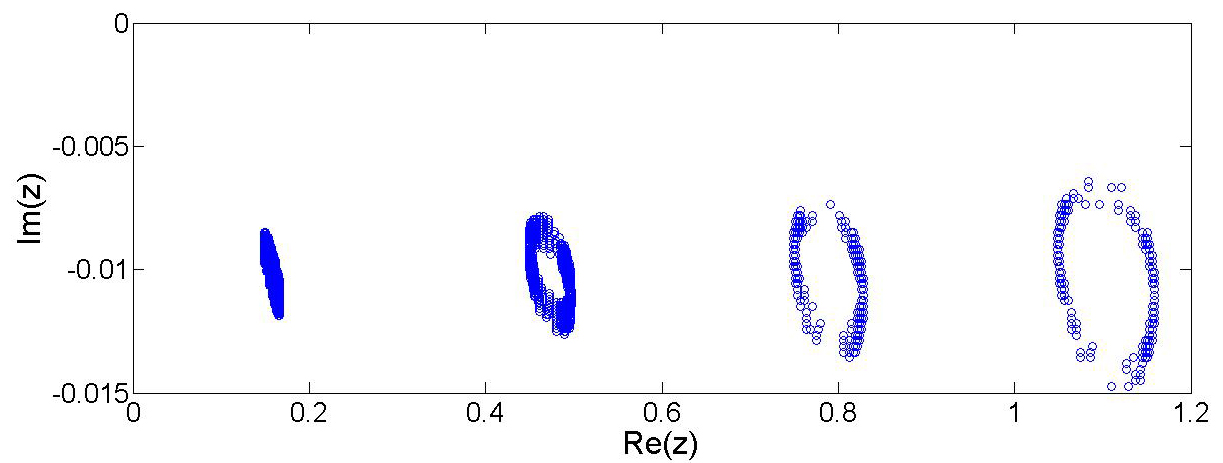}
\end{minipage}
\> 
 \begin{minipage}[t]{0.38\linewidth} 
\includegraphics[width=0.99\linewidth]{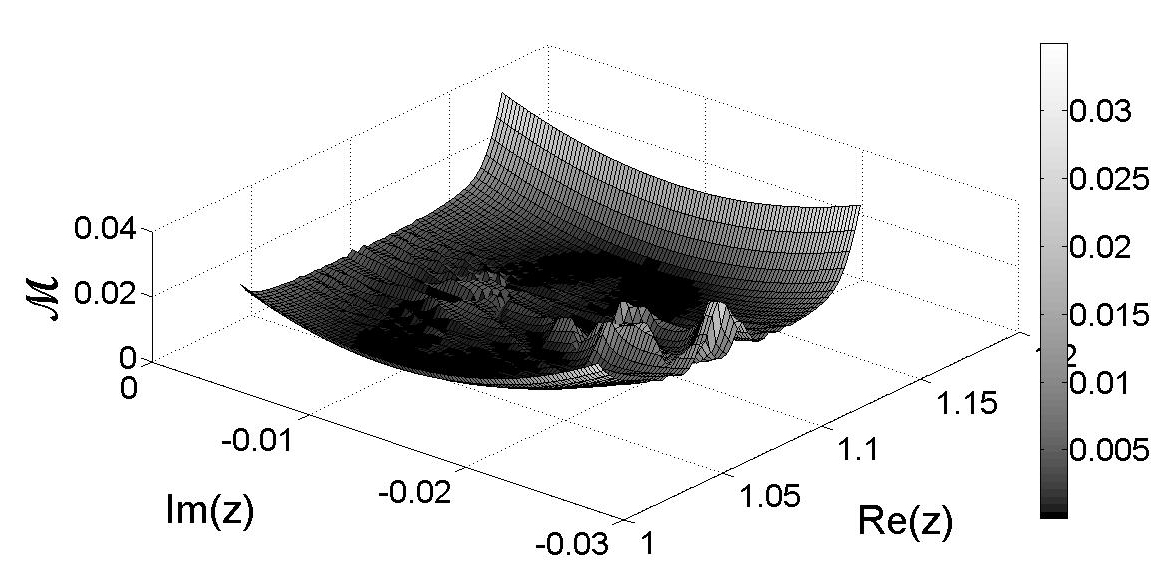} 
\end{minipage} \\
\end{tabbing}

    \caption{\footnotesize MATLAB computation of: (a) $\Si^\nl$ in the rectangle $\D$, 
    (b)  the graph of  $\M (z) := \max_{1 \le n \le N} |F_\nl (n h_{\arg}, z)| $ in the  neighborhood 
$\D_4 = \{\re z \in (1, 1.2) , \ \im z \in (-0.025, 0) \} $ of $\La_4$. }
    \label{f Snl}
\end{figure}

While the shape of $\Z_\nl (\ep)$ in the rectangular neighborhood 
$\D_4 = \{\re z \in (1, 1.2) , \ \im z \in (-0.025, 0) \} $ 
still resembles a boundary of a closed bounded path-connected component $\La_4$
of $\Si [\A]$ (and even suggests that $\La_4$ is simple connected), the graph of 
function $\M (z)  $   
exhibits a close to periodic
pattern of ridges and canyon in the regions with $\im z$ close to 
$-0.007$ and $-0.013$, see Figure \ref{f Snl} (b). 
For the study of $\La_4$,
the  value of $h_{\arg} = \pi/360$ is too large. When $h_{\arg}$ is decreased to $\frac{\pi}{360} \times 10^{-3}$ while keeping $h_R \ge 10^{-3}$ and $h_I \ge 10^{-5}$, only one sharp canyon that 
corresponds to $\Bd \Si [\A] \cap \D_4$ remains.

The values of the minimal decay rate function $\beta_{\min}$ for $\alpha$ in 
the intervals $\re \La_1$, \dots , $ \re \La_4$ can be computed   
using (\ref{e b min =}).  We have done these computations for some of the values of $\alpha$,  see Table 1.
The step $h_I$ is $5\times 10^{-7}$ for the first and second values of $\alpha$ and $h_I =5\times 10^{-6}$ for the others, $h_{\arg}$ varies from $\frac{\pi}{180}\times 10^{-2}$ to $\frac{\pi}{360}\times 10^{-3}$.

\begin{figure}[h]
 \label{f Table1}
 
\begin{tabular}{|l|l|l|l|l|l|l|}
\hline
$\alpha$                          &  \; 0.14977  \;& \; 0.14983 \;& \; 0.16557  \;&\; 0.44931   \;    &\;  0.46050 \;   &  \; 0.49674 \;\\ \hline
$ \beta_{\min} (\alpha) $& \; 0.009119  \;& \; 0.009120 \; &\; 0.01115   \;& \; 0.00910 \;  & \; 0.00846 \; & \; 0.01115 \; \\  \hline
\end{tabular}
\\[1em]
\begin{tabular}{|l|l|l|l|l|l|l|}
\hline
$\alpha$                          &  \; 0.74884  \; \;& \; 0.77200 \; \; & \; 0.82790  \;&\; 1.04838   \;    &\;  1.08800 \;   &  \; 1.15905 \;\\ \hline
$ \beta_{\min} (\alpha) $&  \; 0.00910  \; \;& \; 0.00766 \; \; &\; 0.01113   \;& \; 0.00909 \;  & \; 0.00689 \; & \; 0.01109 \; \\  \hline
\end{tabular}\\[1mm]
Table 1: Some of values of the minimal decay rate function $\beta_{\min} (\alpha)$.
\end{figure}

For $\alpha=1.088$, we have found the structure $B$ generating the quasi-eigenvalue of minimal decay 
$\alpha- \ii \beta (\alpha)$. It consists of $7$ layers. With $x_0 = -1$ and $x_7=0$  for 
the endpoints of the interval $[a_1,a_2]= [-1,0]$, the computed  intervals of constancy of $B$ 
can be written in terms of the boundaries of the layers (switch points) $x_j$, $j=1,\dots, 6$. 
Namely, $B(x) = 110$ for $x \in (x_{2k},x_{2k+1})$, $k=0,\dots,3$, and $B(x)=90$ for $x \in (x_{2k+1},x_{2k+2})$ for $k=0,\dots,2$.
Approximate values $\wt x_j$ for $x_j$ are given  in Table 2 together with absolute error 
estimates $\ep_j$.

\begin{figure}[h]
 \label{f Table 2}
  
\begin{tabular}{|c|c|c|c|c|c|c|}
\hline
  $j$ &   1 & 2 & 3  & 4      &  5   &   6 \\ \hline
  $\wt x_j $ & $-0.86237855$ &     $-0.71669445$ &     $-0.57371202$ &          $ -0.43130025$ &      $-0.28563822$ &      $ -0.15697427$   \\  \hline
  $\ep_j $ & $\ \ 0.00000028$ & $\ \ 0.00004800$ & $\ \ 0.00000900   $  & $ \ \  0.00007000 $  & $\ \ 0.00002000 $ & $ \ \ 0.00011200  $ \\  \hline
\end{tabular}\\[1mm]
{\small Table 2: Switch points and their absolute error estimates for $B(x)$ of minimal decay for $\alpha=1.088$.}
\end{figure}

One of possible improvements of the proposed method may be concerned with calculation 
of zeros of the function $F_\nl (\xi,z)$. 
Indeed, we used the simplest straightforward approach based on sub-level sets of the function $\M$. 
More elaborated approaches that do not neglects the information contained in $\arg F_\nl (n h_{\arg}, z)$ should be more efficient. 

\section{Discussion}
\label{s dis}

Earlier Engineering and Numerical Optimization papers were concerned mainly with 
the case when the constraint functions $b_1$ and $b_2$ are constants.
In this section, we (sometimes speculatively) compare our results for this case 
with earlier suggestions for optimal designs and with discussions of the proper statement of the problem. 

\subsection{Alternating almost periodic structures with a centered defect.} 
\label{ss period defect}

Contemporary designs of high-Q optical cavities usually involve incorporation of defects 
into a periodic structure composed of two materials \cite{AASN03,NKT08,KTTKRN10}. 
For cavities with 1-D geometry \cite{KS08,NKT08,KTTKRN10,BPSchZsch11}, the base periodic 
structure consists of alternating layers (say with widths $\ell_1$ and $\ell_2$ ) of two materials with 
different permittivities (say $b_1$ and $b_2$, respectively). The Physics justification for such designs is 
that the periodic structure forms a distributed Bragg reflector  with high reflectivity for certain bands of frequencies (stopbands). 
When a defect is introduced, the waves with such frequencies are 
expected to accumulate and to be well confined in the region surrounding the defect \cite[Chapter 4]{JJWM08}, \cite{MHvG08}.    
If the fabrication process involves material with 
permittivities from $b_1$ to $b_2$, Remark \ref{r B=b1b2} 
 implies that 
the best confinement has to be produced by structures $B$  that consist only of layers with $B(x)=b_1$ and $B(x)=b_2$
(for one-side open case, see \cite{Ka13,Ka13_Opt}).
In other words, both the base structure and the defect have to consist of the two materials with extreme allowed permittivities.

Basing on various experimental and computational approaches, two types of designs with defects were proposed: 
\begin{itemize}
\item[(a)] a localized defect in a center of a periodic structure, e.g., \cite{AASN03,KS08,DSSW13},
\item[(b)] the  size-modulated 1-D stack
 \cite{NKT08,KTTKRN10,BPSchZsch11}.
\end{itemize}  
Designs of type (b) assumes gradually changing deviations from periodicity in the widths of layers.  

For the structure of minimal decay computed in Section \ref{s NumEx}  (see Table 2), one can observe that it is \emph{close to alternating periodic with a period consisting of one layer with $B(x)=b_1$ and the other layer with $B(x)=b_2$} (since the structure is finite,  either the first, or the last layer has to be treated separately). The widths of layers $[x_1,x_2]$, $[x_3,x_4]$, $[x_5,x_6]$ with $B=b_1$ \emph{gradually decrease from 
$0.146$ to
$0.129$}, and 
the widths of layers $[x_{2k},x_{2k+1}]$, $k=0,\dots,3$, corresponding to $B=b_2$ \emph{gradually increase from 
$0.138$ to  
$0.157$}. This resembles designs of type (b).

Let us note that the 1-D structures considered in \cite{KS08,NKT08,KTTKRN10,BPSchZsch11} are symmetric. If we consider the numerical experiment of Section \ref{s NumEx} in the context of optimization of 
odd modes of a symmetric structure in the interval $(-1,1)$ (cf. Example \ref{ex sym}), then the structure 
$B$ has to be extended to $(0,1)$ by $\wt B(\pm x) = B( x)$, $x \in (-1,0)$. 
Then the layer $[x_6,-x_6]$ is approximately twice larger of the nearby layers
and so the structure does not resemble the size-modulated 1-D stack designs of
 \cite{NKT08,KTTKRN10,BPSchZsch11}.
 
 The symmetric structure $\wt B$ can be considered as a \emph{combination of designs (a) and (b).} It has \emph{a localized defect consisting of the central layer $[x_6,-x_6]$, and this defect is surrounded by alternating layers with gradually changing widths.}

In  \cite{NKT08,KTTKRN10,BPSchZsch11}, the quadratic deviation laws were employed to vary the widths of layers. From the point of view of analytic optimization, it would be interesting to find dependence of the base periodic structure, the parameters of the defect, and width deviations on the frequency of corresponding optimal resonance.

\subsection{Global minimization of decay rate.}
\label{ss GlobMin}

Well-posedness of global maximization for $Q$-factor related functionals 
(without frequency restrictions) was discussed in \cite{KS08,LJ13}. 
For 1-D resonators with constant side constraints and fixed length $a_2-a_1$, 
the global maximizer for $Q$-factor does not exist. 
Indeed, if we take a homogeneous structure $B(x) = b $ 
(with constant $b \in [b_1,b_2]$, $b \neq \nu_{1,2}^2$), 
then (\ref{e om n hom}) implies that $Q(\om_n) = - \frac{\re \om_n}{2 \im \om_n} $ go to $+\infty$ 
as $|n| \to \infty$.

Below, basing our intuition on the numerical experiment of 
Section \ref{s NumEx}, we discuss the question of global minimization of the decay rate 
$|\im \om|$ (cf. \cite{HBKW08}).

It can be seen from Figure \ref{f Snl} (a) that quasi-eigenvalue `clouds` corresponding to higher frequencies are `wider'. It is natural to 
expect that for $|\im \om|$ great enough they 
intersect each other and form one large unbounded cloud. This conjecture is supported also by our study of clouds' projections (see Section \ref{ss ex A} and Appendix \ref{a adm fr})
and by the results on existence of multiple quasi-eigenvalues (see Section \ref{ss mult}). In particular, we already know that the minimal decay function is defined for large enough $|\alpha|$
(Proposition \ref{p adm fr} (iii)).

It also can be seen from Figure \ref{f Snl} (a) and Table 1 that each of subsequent clouds $\La_1$, \dots, $\La_4$, comes closer to the real line than previous. (The values  
$\beta (0.14977) \approx 0.009119$, $\beta (0.4605) \approx 0.00846$, $\beta (0.772) \approx 0.00766$, and $\beta (1.088) \approx 0.00689$ approximately corresponds to minima of decay rate for the clouds $\La_1$, \dots, $\La_4$.) Basing on the above observations, we propose for the case when $b_{1,2}$ are constants and $0 \le b_1 <b_2 < +\infty$ the following

\begin{conjecture}
 The inequality $\liminf_{|\alpha| \to \infty} \beta (\alpha) <  \beta (\alpha_0) $ 
 holds for all admissible frequencies $\alpha_0$. (In particular, the global infimum of the decay rate $\inf_{\om \in \Si [\A]} |\im \om|$ is not achieved over $\A$.)
\end{conjecture}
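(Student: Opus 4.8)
The plan is to exhibit, for each $N\in\NN$, a structure $B_N\in\A$ and a quasi-eigenvalue $\om_N\in\Si(B_N)$ with $\re\om_N\to+\infty$ and $\im\om_N\to 0$ as $N\to\infty$. Granting this, $\beta_{\min}(\re\om_N)\le-\im\om_N\to 0$, so $\liminf_{|\alpha|\to\infty}\beta_{\min}(\alpha)=0$; on the other hand, for any admissible $\alpha_0$ the set $\{\beta:\alpha_0-\ii\beta\in\Si[\A]\}$ is nonempty (by admissibility), closed (Proposition \ref{p Sclosed}) and contained in $(0,+\infty)$ (since $\Si[\A]\subset\CC_-$ by Lemma \ref{l prop Si}(i)), hence $\beta_{\min}(\alpha_0)>0=\liminf_{|\alpha|\to\infty}\beta_{\min}(\alpha)$, which is the assertion. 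The parenthetical statement is immediate as well: $\om_N\in\Si[\A]$ with $|\im\om_N|\to 0$ forces $\inf_{\om\in\Si[\A]}|\im\om|=0$, and this infimum is not attained because $|\im\om|>0$ for every $\om\in\Si[\A]$.

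The structures $B_N$ will be a Fabry--P\'erot layer backed by a Bragg mirror whose number of periods grows with $N$. Consider first the one-side-open case $\nu_2=+\infty$, so that a perfect mirror (the Dirichlet condition) sits at $a_2$; the general case follows by putting such a mirror next to each radiating endpoint. Define $B_N\in\Ext\A$ (so $B_N$ takes only the extreme values $b_1,b_2$) by: $B_N\equiv b_2$ on a ``defect'' interval $[x_N,a_2]$ with $x_N\to(a_1+a_2)/2$; and on $[a_1,x_N]$ let $B_N$ be $N$ copies of a single two-layer cell (one layer of value $b_1$, one of value $b_2$), the cell length $(x_N-a_1)/N\to 0$ being tuned so that the first stop band of the corresponding infinite periodic medium is centred at a frequency $\alpha_N\asymp N$. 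Choose $x_N$ so that $\alpha_N$ is also a Fabry--P\'erot resonance frequency of the defect layer against the Dirichlet mirror; this is possible for all large $N$ since the stop band has width $\asymp N$ while the defect's resonance spacing stays bounded. (For $b_1=0$ replace the lower material by a massless segment; the trace of the one-cell transfer matrix still leaves the interval $[-2,2]$, so stop bands persist.) The expected behaviour is the familiar one: the Bragg mirror has amplitude transmittance $\asymp e^{-\kappa N}$ over a fixed portion of its stop band with $\kappa=\kappa(b_1,b_2)>0$, so the defect mode leaks only through it and picks up decay rate $\asymp e^{-2\kappa N}\to 0$, while its frequency stays $\asymp\alpha_N\to\infty$.

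To make this rigorous, note that each $B_N$ is piecewise constant, so $F(\cdot;B_N)$ is an explicit product of $2\times2$ transfer matrices composed with the boundary functionals of Section \ref{ss F}; it has a zero $\om_N$ near the predicted $z_N=\alpha_N-\ii\beta_N$ (with $\beta_N\asymp e^{-2\kappa N}$) by Rouch\'e's theorem, comparing $F(\cdot;B_N)$, on a circle of radius $r_N=o(1)$ about $\alpha_N$, with the characteristic function of the idealised cavity ``$B\equiv b_2$ on $[x_N,a_2]$, Dirichlet at $a_2$, and the limiting perfectly reflecting condition of the Bragg stack imposed at $x_N$'', for which $z=\alpha_N$ is a genuine real zero; the difference is exponentially small in $N$, hence $o(r_N)$. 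The heart of the matter, and the main obstacle, is exactly this step: one must prove an $N$-uniform lower bound for the evanescence exponent of the truncated periodic medium over a fixed fraction of its stop band (standard Floquet theory for the one-cell transfer matrix, but degenerate at the band edges, so it must be quantified away from them) and turn it into a perturbation estimate that both locates $\om_N$ and pins the order of $|\im\om_N|$. The transfer-matrix bookkeeping is elementary but lengthy and requires care near the band edges and the defect resonance; the only further points needing attention are the simultaneous tuning of the stop band and the Fabry--P\'erot resonance, and the placement of a Bragg mirror next to each radiating endpoint in the two-side-open and Neumann ($\nu_1=0$) settings.
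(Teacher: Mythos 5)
You should first note that the paper offers no proof of this statement at all: it is explicitly posed as a \emph{conjecture}, supported only by the numerical experiment of Section \ref{s NumEx} and by heuristics about Bragg-type reflection, so there is no argument of the authors to compare yours against. Your reduction of the conjecture to the construction of pairs $(\om_N,B_N)$ with $\om_N\in\Si(B_N)$, $\re\om_N\to\infty$, $\im\om_N\to 0$ is sound: granted such a sequence, $\liminf_{|\alpha|\to\infty}\beta_{\min}(\alpha)=0$, while $\beta_{\min}(\alpha_0)>0$ for every admissible $\alpha_0$ because the minimum is attained (Proposition \ref{p Sclosed}, Corollary \ref{c exist}) and $\Si[\A]\subset\CC_-$ (Lemma \ref{l prop Si}), and the parenthetical non-attainment statement follows the same way.

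The genuine gap is exactly the step you yourself flag as ``the main obstacle'': the argument that the truncated Bragg stack confines the defect mode with decay rate $\asymp e^{-2\kappa N}$ is asserted, not proved. To run the Rouch\'e comparison you need (a) an evanescence bound for the $N$-cell transfer matrix that is uniform both in $N$ and in the spectral parameter on a contour around $\alpha_N\asymp N$ (Floquet multipliers degenerate at band edges, and here the cell length shrinks like $1/N$ while the frequency grows like $N$, so nothing is fixed as $N\to\infty$); (b) a quantitative lower bound for the idealised (conservative) characteristic function on the circle $|z-\alpha_N|=r_N$, i.e.\ control of its derivative at the real zero uniformly in $N$, without which ``the difference is exponentially small, hence $o(r_N)$'' does not locate a zero; and (c) a matching of the reflection \emph{phase} of the finite stack with the Fabry--P\'erot tuning of $x_N$, since the stack is not a hard mirror and the phase error enters at the same order as the quantity you are trying to bound. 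None of these estimates is carried out, and the side cases you mention (the $b_1=0$ massless layers, the two-side-open and Neumann configurations with a mirror at each radiating end) are likewise only sketched. As it stands the proposal is a plausible programme, consistent with the distributed-Bragg-reflector intuition the paper itself invokes in Section \ref{ss period defect}, but it does not settle the conjecture; completing items (a)--(c) would be a genuine contribution beyond the paper.
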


A stronger form of this conjecture that assumes  existence of  
$\lim_{|\alpha| \to \infty} \beta (\alpha)$ could help to explain the sliding effects in gradient ascent numerical experiments of \cite{HBKW08,KS08}. Namely, in these experiments, iteratively improved resonances $\om^{[n]}$ were approaching to the real axis with simultaneous growth of frequency 
$\re \om^{[n]}$. This could happen if $\om^{[n_0]}$ for some iteration $n_0$ reach (or come very close to) the Pareto optimal frontier 
$\{ \alpha - \ii \beta (\alpha) \ : \ \alpha \in \re \Si [\A] \}$ and then move along this frontier 
to $\infty$ with growing $\re \om$ and decreasing $|\im \om|$. In our opinion, the comparison 
of results of 
simulations \cite{BPSchZsch11,HBKW08,KS08,NKT08} 
suggests that $\lim_{|\alpha| \to \infty} \beta (\alpha) = 0$.

It is also interesting to consider the value of 
$\liminf_{|\alpha| \to \infty} \beta (\alpha) $ from the point of view of the 
question of uniform separation of the set of resonances $\Si (B)$ 
from the real line (the question of exponential energy decay in the sense of \cite{CZ94,CZ95,CO96}). 
The absence of quasi-eigenvalues in the certain strip $\{-\beta_0 < \im z  < 0 \}$ is presently 
known only under additional  conditions involving bounds on total variation of $B$ \cite{CZ95}. 
So it is difficult to expect that, for certain $\beta_0>0$, the set 
$\Si [\A] $ does not intersects with the strip $\{-\beta_0 < \im z  < 0 \}$ in the case of 
the admissible family $\A$ restricted only by constant side constraints.

\appendix

\section{Appendix}
\label{s Ap}

\subsection{Analyticity, the proof of Lemma \ref{l an}}
\label{ss an and exist}

\textbf{(i)} follows from (ii) and (\ref{e F}). \textbf{Let us prove
(ii)} for $(z, B) \mapsto \psi (\cdot ,z; B)$. Consider the Maclaurin series (in $z\in \CC$ and $B \in L_\CC^1$)
\begin{eqnarray}
\Psi (x , z ; B) = (x-a_1) - \psi_1 (x; B) z^2 + \psi_2 (x; B) z^4 - \psi_3 (x; B)  z^6 + \dots , \label{e pow ser}\\
\psi_0 (x; B) \equiv x-a_1 , \ \  \vphi_{j} (x; B) = \int_{a_1}^x
(x - s) \, \vphi_{j-1} (s; B) \, B (s)\dd s , \ \ j \in \NN ,
\label{e psi j}
\end{eqnarray}
as a $C [a_1,a_2]$-valued series. One can see that
\begin{equation} \label{e psi a2}
\text{$|\psi_{j} (x;B)| \le \psi_j (x; |B|)  \le \psi_j (a_2; |B|)$ \ \ for all $x \in [a_1,a_2]$.}
\end{equation}
It can be shown in the way similar to \cite[Sect. 2]{KK68_II} that
\begin{equation} \label{e psi j<}
\psi_{j} (a_2; |B|) \leq \frac{2^j (a_2-a_1)^{j+1} \| B \|_1^j}{(2j) !} , \ \ \ j \in \NN  .
\end{equation}
Hence, the series (\ref{e pow ser}) converge uniformly on every bounded set of
$\CC \times L_\CC^p $. So  (\ref{e pow ser}) defines an analytic map
from $\CC \times L_\CC^p $ to $C [0,\ell]$. Since (\ref{e pow ser})
satisfies $\pa_x^2 y = - z^2 B y $, $y(a_1)=0$, and $\pa_x y(a_1) = 1$, we see that
$\Psi (x , z ; B) = \psi (x , z ; B)$.
The equality $\pa_x^2 \psi = - z^2 B \psi $ yields that
$\pa_x^2 \psi \in L_\CC^p $ and, moreover, that $(z, B) \mapsto \pa_x^2 \psi (\cdot ,z; B)$ is an analytic map
from $\CC \times L_\CC^p$ to $ L_\CC^p$. Hence, (\ref{e pow ser}) defines an analytic map
from $\CC \times  L_\CC^p$ to $W^{2,p}$.
This map is also bounded-to-bounded due to (\ref{e psi a2}), (\ref{e psi j<}) and the equalities $\pa_x^2 \psi = - z^2 B \psi $, $\psi (a_1)=1$ .

Similarly one can prove that $(z, B) \mapsto\phi (\cdot ,z; B)$ is an analytic and bounded-to-bounded map
from $\CC \times  L_\CC^p$ to $W^{2,p}$ (cf. \cite{KK68_II,Ka14}).
Finally, since $\theta  =  \vphi - \ii z \nu_1 \psi $, we obtain the same statement for the map
$(z, B) \mapsto \theta (\cdot ,z; B)$. This completes the proof.

\subsection{Directional derivatives, the proof of Lemma \ref{l der F}}
\label{ss proof of der F}

\textbf{Differentiation w.r.t. $B$.}
For any $z \in \CC$, the functions
$\theta (x,z;B)$ and $\pa_x \theta (x,z;B)$ satisfy (\ref{e int ep th}) and
\begin{eqnarray}
\pa_x \theta (x,z;B) & = & -iz\nu_1-z^2 \int_{a_1}^x \ B(s) \ \theta (s,z;B) \ \dd s . \label{e pax phi z int}
\end{eqnarray}
To find directional derivatives $\frac{\pa \theta}{\pa B} (V)$ and $\frac{\pa \ \pa_x \theta}{\pa B} (V)$,
we differentiate  these equalities by definition using Lemma \ref{l an}.
We get
\begin{gather}
 \frac{\pa \theta (x,z;B)}{\pa B} (V)  = -
z^2 \int_{a_1}^x (x-s) B (s) \frac{\pa \theta (s,z;B)}{\pa B} (V) \dd s 
- z^2 \int_{a_1}^x (x-s) V (s)  \theta (s,z;B) \dd s,
  \label{e int paBD ph} \\
\frac{\pa \  \pa_x \theta (x,z;B)}{\pa B} (V)  =  -z^2 \int_{a_1}^x B(s)  \frac{\pa \theta (s,z;B)}{\pa B} (V) \dd s -z^2 \int_{a_1}^x V (s) \theta (s,z;B) \dd s.
 \label{e int paBD pax ph}
\end{gather}
It follows from (\ref{e int paBD ph}) that $\frac{\pa \theta (x,z;B)}{\pa B} (V)$
is the solution $y(x)$ of the initial value problem
\begin{equation} \label{e nh bvp}
 y''(x) + z^2 B(x) y(x) = f(x) ,  \ \ y(a_1) = 0, \ y'(a_1) = 0
\end{equation}
with $ f(x) = - z^2 V (x) \theta (x,z;B)$.
Further, (\ref{e int paBD pax ph}) can be rewritten as
$ \frac{\pa \  \pa_x \theta (x,z;B)}{\pa B} (V)= y'(x)$
(as a by-product, we get
$ \frac{\pa \  \pa_x \theta (x,z;B)}{\pa B} (V)  = \pa_x \left[ \frac{\pa \theta (x,z;B)}{\pa B} (V) \right]$).
Solving (\ref{e nh bvp}) by variation of parameters, one can find $y$,$y'$, and, in turn, $y(x) + \frac{\ii y'(x)}{z\nu_2}$.
For $z \neq 0$, $ \bigl[y(x) + \frac{\ii y'(x)}{z\nu_2}\bigr]_{x=a_2}$ equals
\begin{multline*}
- \left[ \theta(a_2) +\frac{i}{z\nu_2} \pa_x \theta(a_2)\right]
\int_{a_1}^{a_2} f(s)\  \psi (s,z;B)\ \dd s+ \left[ \psi(a_2) +\frac{i}{z\nu_2} \pa_x \psi(a_2) \right]
\int_{a_1}^{a_2} f(s)\  \theta (s,z;B)\ \dd s  .
\end{multline*}

Substituting $f$, and $z=\om \in \Sigma (B)$ , we get
\begin{eqnarray*}
\theta (a_2,\om;B) & + & \frac{\ii}{\om\nu_2} \pa_x \theta (a_2,\om;B)  =  0 \label{e th bc a2}\\
\text{and} \qquad
 \frac{\pa F (\om,B)}{\pa B} (V) & =&  -\om^2 \left[ \psi(a_2) +\frac{i}{z\nu_2} \pa_x \psi(a_2) \right] \int_{a_1}^{a_2}
\theta^2 (s,\om;B) \ V (s) \ \dd s . \notag
\end{eqnarray*}
To obtain (\ref{e paBD F}), it remains to note that
\begin{equation*} \label{e 1 / pa x th}
 \psi(a_2) +\frac{i}{z\nu_2} \pa_x \psi(a_2) = - \frac{1}{\pa_x \theta(a_2)}.
\end{equation*}
The last identity easily follows from
the constancy of the Wronskian
$\left| \begin{array}{cc}
\theta & \psi \\
\pa_x \theta & \pa_x \psi
\end{array} \right| \equiv 1 $.

\textbf{Differentiating  (\ref{e int ep th}) and  (\ref{e pax phi z int}) w.r.t. }$z$,
we see that $\pa_z \theta $
is given by the solution $y$ of the problem
$$y''(x) + z^2 B(x) y(x) = f(x) ,  \ \ y(a_1) = 0, \ y'(a_1) = -i\nu_1$$
with $ f(x) = - 2 z B (x) \theta (x,z;B)$ and that $\pa_z \pa_x \theta = \pa_x \pa_z \theta$. Hence, for $z=\om \in \Sigma (B)$,
\begin{multline*}
\pa_z F (\om;B)  =  \pa_z \theta (a_2,\om;B) +\frac{\ii \pa_z \pa_x \theta (a_2,\om;B)}{\om\nu_2} - \frac{\ii \pa_x \theta (a_2,\om;B)}{\om^2\nu_2} 
= \\ = 
y(a_2) +\frac{\ii y' (a_2)}{\om\nu_2} - \frac{\ii \pa_x \theta (a_2,\om;B)}{\om^2\nu_2}.
\end{multline*}
Solving for $y$ by variation of parameters as before, one can easily get (\ref{e paz F}).

\subsection{Admissible high frequencies, the proof of Proposition \ref{p adm fr} (iii)}
\label{a adm fr}

It is enough to prove the statement of Proposition \ref{p adm fr} (iii) under the assumption that $b_{1,2}$ are constants satisfying $b_1<b_2$. We consider in details several essentially different cases of arrangement of the intervals $[\nu_1,\nu_2]$ and $[b_1^{1/2},b_2^{1/2}]$, and very briefly the other cases (for which the proof can be obtained by simple modification of arguments). 

\emph{Step 1. For the cases when $0<b_1^{1/2} <b_2^{1/2}<\nu_1$ or $\nu_2 \le b_1^{1/2}$,}
let us show that 
\begin{equation} \label{e re Si A out}
\re \Si [\A] \supset \left\{ | \alpha | \ge 
\frac{\pi }{(a_2-a_1) b_2^{1/2} } 
\left\lceil  \frac{ b_1^{1/2} }{ b_2^{1/2} -b_1^{1/2} }   \right\rceil
\right\} .
\end{equation}
Indeed, for each $n \in \NN$ consider the intervals $\Sset_n$ covered by $\re \om_n (b)$
when a constant $b$ changes in the range $(b_1,b_2]$. By (\ref{e om n hom}),
$\Sset_n = \frac{\pi n}{a_2-a_1} [b_2^{-1/2},b_1^{-1/2})$. 
If the right end of $\Sset_n$ is $\ge$ than the left end of $\Sset_{n+1}$ for all $n \ge n_0 \in \NN$, then $\re \Si [\A]$ contains the set 
$
\cup_{n\ge n_0} \Sset_n = \left[ \frac{\pi n_0}{(a_2-a_1)b_2^{1/2}} , +\infty \right)
$
and also its reflection w.r.t. $0$, see Lemma \ref{l prop Si} (ii).
Estimating $n_0$ from the inequality $b_1^{-1/2} n \ge b_2^{-1/2} (n+1)$, one obtains (\ref{e re Si A out}). 

\emph{Step 2.} Slightly modifying arguments of Step 1, one can prove that:
\begin{eqnarray}
\re \Si [\A] \supset \left\{ | \alpha | \ge 
\frac{\pi }{(a_2-a_1) b_2^{1/2} } 
\left(\frac12 + \left\lceil  \frac{ b_1^{1/2} }{ b_2^{1/2} -b_1^{1/2} }  - \frac12 \right\rceil \right) 
\right\}
 \text{ if }  \nu_1 \le b_1^{1/2} < b_2^{1/2} < \nu_2 ; \label{e re Si A in} \\
\re \Si [\A] \supset \left\{ | \alpha | > 
\frac{\pi }{(a_2-a_1) b_2^{1/2} } 
\left(\frac12 + \left\lfloor  \frac{ b_1^{1/2} }{ b_2^{1/2} -b_1^{1/2} }  + \frac12 \right\rfloor \right) 
\right\}
 \text{ if }  \nu_1 = b_1^{1/2} < b_2^{1/2} = \nu_2 ; \label{e re Si A ==} \\
\re \Si [\A] \supset \left\{ | \alpha | > 
\frac{\pi }{(a_2-a_1) b_2^{1/2} } 
\right\} \qquad \qquad \qquad
 \text{ if }  0= b_1^{1/2} < b_2^{1/2} \le \nu_1  ; \label{e re Si A b1=0 b2<nu1}
\end{eqnarray}
and that,
\begin{multline}
\text{ in the cases $0<b_1<b_2^{1/2} = \nu_1$ and $\nu_1 < b_1^{1/2} < b_2^{1/2}=\nu_2$}  \\ \text{(\ref{e re Si A out}) and (\ref{e re Si A in}), resp., are valid after the change 
of $| \alpha | \ge \dots $ to $| \alpha | > \dots $}  \label{e re Si A in and out =} .
\end{multline}

\emph{Step 3. Let us show that in the case 
$\nu_1 \le b_1^{1/2} < \nu_2 < b_2^{1/2}$,}
\begin{equation} \label{e re Si A in out}
\re \Si [\A] \supset \left\{ | \alpha | \ge 
\frac{\pi }{(a_2-a_1) b_2^{1/2} } 
\left\lceil  \frac{\frac{1}{2} b_2^{1/2} + b_1^{1/2} }{ b_2^{1/2} -b_1^{1/2} }   \right\rceil
\right\} .
\end{equation}
 Now, the sets $\Sset_n$ covered by $\re \om_n (b)$ have gaps. Indeed, assume that $b_1>0$. Then
\[
\Sset_n = \frac{\pi}{(a_2 -a_1)} \left[ \frac{n}{b_2^{1/2}} , \frac{n}{\nu_2} \right) \cup \frac{\pi}{(a_2 -a_1)} 
\left( \frac{ n+1/2}{\nu_2}  ,  \frac{ n+1/2}{b_1^{1/2}} \right) .
\]
The gap is closed by the sets $\Sset_{n-1}$ and $\Sset_{n+1}$ as soon as 
$(n-1/2) b_1^{-1/2} \ge (n+1) b_2^{-1/2}$. The latter is valid 
if $n \ge \frac{\frac{1}{2} b_2^{1/2} + b_1^{1/2} }{ b_2^{1/2} -b_1^{1/2}} $.
This proves  (\ref{e re Si A in out}) for $b_1 \neq 0$. 
To prove (\ref{e re Si A in out}) in the case $0=b_1^{1/2} $, it is enough to take the limit $b_1 \to 0$.

\emph{Step 4.} Modifying arguments of Step 3, it can be shown that 
\begin{eqnarray}
\re \Si [\A] \supset \left\{ | \alpha | > 
\frac{\pi }{(a_2-a_1) b_2^{1/2} } 
\left(\frac12 + \left\lceil  \frac{ \frac{3}{2} b_1^{1/2} }{ b_2^{1/2} -b_1^{1/2} } \right\rceil \right) 
\right\}
& \text{if} &  0< b_1^{1/2} < \nu_1 < b_2^{1/2} \le \nu_2 ; \label{e re Si A b2 in} \\
\re \Si [\A] \supset \left\{ | \alpha | \ge 
\frac{\pi }{(a_2-a_1) b_2^{1/2} } 
 \left\lceil   \frac{b_2^{1/2} + b_1^{1/2} }{ b_2^{1/2} -b_1^{1/2} }   \right\rceil 
\right\}
& \text{if} & 0 < b_1^{1/2} < \nu_1 \le \nu_2 < b_2^{1/2} ; \label{e re Si A v12 in} \\
\re \Si [\A] \supset \left\{ | \alpha | > 
\frac32 \, \frac{\pi }{(a_2-a_1)  b_2^{1/2} } 
 \right\}
& \text{if} & 0= b_1^{1/2} < \nu_1 < b_2^{1/2} \le \nu_2 ; \label{e re Si A v12 in b1=0} \\
\re \Si [\A] \supset \left\{ | \alpha | \ge 
\frac{2\pi }{(a_2-a_1) b_2^{1/2} }  \right\}
 & \text{if} &  0 = b_1^{1/2} < \nu_1 \le  \nu_2 < b_2^{1/2} . 
\label{e reSiA in b1=0}
\end{eqnarray}

Thus, (\ref{e re Si superset}) follows from the above inclusions.

\subsection{Solvability of bang-bang equations, the proof of Theorem \ref{t uni B} }
\label{ss uni nl}

It is enough to consider the case $ z ^2 \in \overline{\CC_-}$ (see the explanations to (\ref{e Snl sym}) in Section \ref{s Snl pert}).

\begin{lem} \label{l uni B loc}
Let  (\ref{e a b1>0}) be fulfilled. Let $z^2 \in \overline{\CC_-}$. Then there exists
$\ep = \ep (a)>0$ such that problem (\ref{e nonlin z}), (\ref{e ini cond}) has a unique solution $y$ on  $(a-\ep,a+\ep) \cap [a_1,a_2]$.
\end{lem}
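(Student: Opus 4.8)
The plan is to reduce the nonlinear initial value problem, on a sufficiently short interval $J=(a-\ep,a+\ep)\cap[a_1,a_2]$, to the classical linear theory of $y''=-z^2 c(x)y$ with a coefficient $c\in L^1_\RR$, for which the initial value problem has a unique $W^{2,1}$-solution by Picard iteration (the same estimates as in the proof of Lemma \ref{l an}). First I would record the integral form: $y\in W^{2,1}(J)$ solves (\ref{e nonlin z}), (\ref{e ini cond}) iff
\[
y(x)=c_0+c_1(x-a)-z^2\int_a^x(x-s)\,y(s)\,B_y(s)\,\dd s,\qquad B_y:=b_1+(b_2-b_1)\ChiCpl(y^2),
\]
where $0\le B_y\le b_1+b_2\in L^1$, so both sides make sense and any solution is continuous. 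If $\meas\big(E\cap(a-\de,a+\de)\big)=0$ for some $\de>0$, then $b_1=b_2$ a.e. near $a$, the equation is the linear $y''=-z^2 b_1 y$, and there is nothing to prove; so from now on I assume every one-sided neighbourhood of $a$ meets $E$ in positive measure, and then assumption (\ref{e a b1>0}) gives $B_y\ge b_1$ with $b_1>0$ on a set of positive measure in every such neighbourhood.

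The engine of the argument is the quantity $g(x):=\im\big(\overline{y(x)}\,\pa_x y(x)\big)$, which for a $W^{2,1}$-solution is absolutely continuous with
\[
g'(x)=\im\big(\overline{y}\,y''\big)=\im\big(-z^2 B_y|y|^2\big)=B_y(x)\,|y(x)|^2\,\im(-z^2)\ \ \text{a.e.}
\]
Since $z^2\in\overline{\CC_-}$ we have $\im(-z^2)\ge0$, so $g$ is non-decreasing, and — by the previous paragraph — it is \emph{strictly} increasing on any subinterval on which $y\neq0$ whenever $\im(-z^2)>0$. As in Lemma \ref{l turn int}, $\pa_x\arg_* (y^2)=2g/|y|^2$ wherever $y\neq0$, so a strictly increasing $g$ that is also nonzero forces $\arg(y^2)$ to move strictly off the real axis, i.e.\ to leave the switching locus $\{y^2\in\RR\}$ at once.

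Next I would split according to the location of $c_0$ (and of $c_1$ when $c_0=0$). The easy cases: if $c_0=c_1=0$, then $y\equiv0$ is a solution and uniqueness follows from $|y(x)|\le|z|^2(a_2-a_1)\int_a^x|y(s)|(b_1+b_2)(s)\,\dd s$ by Gr\"onwall with an $L^1$ weight. If $c_0\neq0$ with $c_0^2\in\CC_+$ (resp.\ $c_0^2\in\CC_-$), or $c_0=0$ with $c_1^2\in\CC_+$ (resp.\ $\CC_-$), one freezes $B_y\equiv b_2$ (resp.\ $b_1$), solves the resulting linear problem on $[a_1,a_2]$, and checks — using that $y^2(a)=c_0^2$ lies in an \emph{open} half-plane, or that $y^2(x)/(x-a)^2\to c_1^2$ — that $y^2$ stays in that half-plane on a punctured neighbourhood of $a$; then this function solves the nonlinear problem near $a$, and uniqueness follows by a standard continuation argument. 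In the delicate case $c_0^2\in\RR\setminus\{0\}$ (or $c_0=0$ and $c_1^2\in\RR$) the trajectory starts \emph{on} the switching locus, and here one invokes the engine: when $g(a)=\im(\overline{c_0}c_1)\neq0$ and $\im(-z^2)>0$ monotonicity gives $g>0$ on $(a,a+\ep)$, hence $\arg(y^2)$ is strictly monotone there and leaves $\RR$ immediately to one fixed side, so $\ChiCpl(y^2)$ is constant on $(a,a+\ep)$ and constant with the opposite value on $(a-\ep,a)$; this reduces again to two linear problems glued at $a$ (existence by solving them and verifying the sign of $g$, uniqueness by continuation).

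The main obstacle is the genuinely degenerate sub-cases: $\im(-z^2)=0$ (i.e.\ $z^2\in\RR$), and $g(a)=0$ with $\{x\ge a:\ g(x)=g(a)\}$ non-trivial. For $z^2\in\RR$ with $g(a)=0$ one has $c_1/c_0\in\RR$ (or $c_0=0,\ c_1^2\in\RR$), so after a common phase rotation the data are real; writing $y=u+\ii v$, the imaginary part $v$ solves the real linear equation $v''=-z^2 B_y v$ with $v(a)=v'(a)=0$, whence $v\equiv0$ near $a$ by Gr\"onwall, so $y$ is real, $y^2\ge0$, $B_y=b_1$, and uniqueness follows from the linear theory. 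If instead $g$ is flat on $[a,\tau]$ with $\im(-z^2)>0$, then $B_y|y|^2=0$ a.e.\ there, hence $B_y=0$ a.e.\ (as $y(a)\neq0$), hence $b_1=0$ a.e.\ on $(a,\tau)$, which by (\ref{e a b1>0}) forces $(a,\tau)\cap E$ to be null, i.e.\ $b_1=b_2=0$ there, so $y$ is affine on $[a,\tau]$; one then restarts the analysis at $\tau$, where $g$ is strictly increasing just afterwards by monotonicity and the choice of $\tau$ — a short bootstrap that terminates because $g$ is monotone (unless $y(\tau)=0$, which is the $c_0=0$ case relocated to $\tau$). Collecting the cases and shrinking $\ep$ finitely many times yields the unique solution on $(a-\ep,a+\ep)\cap[a_1,a_2]$.
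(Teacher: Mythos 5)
Your overall strategy mirrors the paper's: reduce to the case where $E$ accumulates at $a$, and show, by a case analysis on $(c_0,c_1,z^2)$ driven by the monotone quantity $g=\im(\overline{y}\,\pa_x y)$, that the bang-bang coefficient $\B(y)$ is locally forced, so that all solutions solve one and the same linear problem near $a$ (uniqueness) and the frozen linear problem supplies existence. The gap is that your case analysis misses precisely the hardest configurations. Your engine is invoked only under the hypothesis $g(a)=\im(\overline{c_0}c_1)\neq0$, and your two degenerate branches are $z^2\in\RR$ and ``$g$ flat on $[a,\tau]$'' (the latter treated only when $y(a)\neq0$; note that under your own standing reduction this flat branch is vacuous, since $g(x)=\im(-z^2)\int_a^x B_y|y|^2\,\dd s\ge \im(-z^2)\int_{E\cap(a,x)}b_1|y|^2\,\dd s>0$ for $x>a$ when $y(a)\neq 0$). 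What is left untreated is exactly the tangential case $c_0^2\in\RR\setminus\{0\}$, $\overline{c_0}c_1\in\RR$, $z^2\in\CC_-$ (the paper's Case 6), where $g(a)=0$ but $g$ is not flat, and the case $c_0=0$, $c_1^2\in\RR\setminus\{0\}$, $z^2\in\CC_-$ (the paper's Cases 7--8), where $g(a)=0$ automatically, $\arg y(a)$ is undefined, and one needs either the limiting argument $\arg y(x)\to\arg c_1$ combined with strict monotonicity, or the paper's quadrant argument for $y'$ based on $\im y'(x)=-\int_a^x\int_a^s B_y\,\im(z^2y')\,\dd t\,\dd s$. These are the situations in which the trajectory starts \emph{on} the switching locus with tangential data, i.e.\ the core of the lemma, and your text never determines $\B(y)$ there.

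Moreover, the one concrete conclusion you do assert in the delicate case --- that $\ChiCpl(y^2)$ is constant on $(a,a+\ep)$ and takes the \emph{opposite} value on $(a-\ep,a)$ --- is correct only when $g(a)\neq0$ (transversal crossing). In the tangential case one has $g>0$ on $(a,a+\ep)$ and $g<0$ on $(a-\ep,a)$, hence $\arg_*y>\arg_*y(a)$ on \emph{both} sides, so $y^2$ enters the \emph{same} open half-plane on both sides of $a$ and $\B(y)$ equals the same constraint there ($b_2$ if $c_0^2>0$, $b_1$ if $c_0^2<0$); gluing two linear problems with opposite coefficients would not produce a solution. Smaller repairs: the sub-case $z^2\in\RR$ with $g(a)\neq0$ is assigned to neither branch (easy, since then $g\equiv g(a)$); the ``common phase rotation'' is not a symmetry of the nonlinearity ($y\mapsto \ii y$ sends $y^2$ to $-y^2$), though your component argument ($\re y$ or $\im y$ solves the linear equation with zero Cauchy data, hence vanishes, hence $y^2\in\RR$ and $\B(y)=b_1$) survives; the negation of ``$b_1=b_2$ a.e.\ near $a$'' gives only two-sided accumulation of $E$, so the reduction must be made on each side separately; and for uniqueness on a \emph{fixed} interval your ``standard continuation argument'' should be replaced by the paper's uniformity observation, namely that the Gronwall bound from the integral equation is uniform over all solutions, so the neighbourhood on which $\B(y)$ is forced can be chosen independently of the solution.
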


\begin{proof}
Firstly, if $b_1=b_2$ in a neighborhood of $a$, the statement is obvious. 

Next, to be specific, assume that $a<a_2$ and that for any $\de>0$ the measure of the set of $x \in (a,a+\de)$ such that $b_1(x) \neq b_2 (x)$ is positive.
Then   (\ref{e a b1>0}) yields
\begin{equation} \label{e bj>0 a}
\text{for any } \de >0, \qquad \meas \{ x \in (a,a+\de) \ : \ b_j (x)>0 \} >0  , \qquad j=1,2.
\end{equation}
It is enough to prove the lemma for the interval $[a, a+\ep)$ with small enough $\ep>0$.

\textbf{Uniqueness.} Assume that there exists at least one solution $y$ to  
 (\ref{e ini cond}), (\ref{e nonlin z}).
Our goal is to prove  that there exist $\ep>0$ and $B_0 (\cdot)$ such that for any solution $y$ to
(\ref{e ini cond}), (\ref{e nonlin z}) the function $\B (y) (\cdot) $ coincides (in $L^1$-sense) with $B_0 (\cdot)$ on $(a,a+\ep)$. This and the structure of equation (\ref{e nonlin z}) immediately implies uniqueness.

\emph{Case 1.} If $c_0=c_1=0$, any solution $y$ to (\ref{e nonlin}), (\ref{e ini cond}) is trivial and $\B (y) \equiv b_1$.

\emph{Case 2.} If $c_0^2 \not \in \RR$ or $\pa_x y^2 (a) = 2 c_0 c_1 \not \in \RR$, then, for a certain nonempty interval $(a,a+\ep)$, the sign of $\im y^2 (x)$ is completely determined by $c_0 and c_1$. This easily gives the desired statement with $B_0$ equal to one of the constraints (either $b_1$, or $b_2$) on $(a,a+\ep)$.

\emph{Case 3.} The case when $c_0=0$ and $c_1^2 \not \in \RR$ is analogous to the Case 2.

\emph{Case 4.}  Assume that $c_0^2 \in \RR \setminus\{0\}$, $c_0 c_1 \in \RR$, and $ z ^2 \in \RR$.
Then either $c_0, c_1 \in \RR$, or $c_0,c_1 \in \ii \RR$.
Moreover, any solution $y$ stays on the same line ($\RR$ or $\ii \RR$) since $ z ^2 \in \RR$.  Therefore $y^2 (x) \in \RR $ and
$\B (y) = b_1$.

\emph{Case 5.}  The case when $c_0 = 0$, $c_1^2 \in \RR \setminus \{0\}$, and $ z ^2 \in \RR$
is analogous to Case 4.

\emph{Case 6.} Assume that $c_0^2 \in \RR \setminus \{0\}$, $c_0 c_1 \in \RR$, and $ z ^2 \in \CC_-$.
 Consider an arbitrary continuous in a vicinity of $a$ branch $\arg_* y (x)$ of $\arg y$. Since $\overline{c_0}c_1 \in \RR$, the arguments of the proof of Lemma \ref{l turn int} (i), (iv) imply
\begin{equation*} \label{e pax arg y}
\pa_x \arg_* y = \frac{\im (\overline{y} \pa_x y) }{|y|^2} \text{ and }
\im [\overline{y(x)} \pa_x y(x)] = -\im  z ^2 \int_a^x
|y(s)|^2 \B (y) (s)   \dd s .
\end{equation*}
It follows from these equalities and (\ref{e bj>0 a}) that $\pa_x \arg_* y (x) >0 $ on $ (a,a+\ep_1(y))$ with a certain $\ep_1 (y)>0$. So for any solution $y$,
\begin{equation} \label{e Byx=}
\B (y) (x) = \left\{ \begin{array}{ll}
b_2 (x), & \text{ if } c_0^2 >0 \\
b_1 (x), & \text{ if } c_0^2 <0
\end{array}  \right. \qquad \text{on } (a,a+\ep_1(y)) .
\end{equation}
This easily implies that it is possible to choose $\ep_1 (y)$ in (\ref{e Byx=}) independently of a solution $y$.

\emph{Case 7.} Assume that $c_0 = 0$, $c_1 \in \RR \setminus \{ 0 \}$, and $ z ^2 \in \CC_-$.
Since
\begin{equation}  \label{e y pr = int y}
y' (x) =c_1 -  z ^2 \int_a^x y(s) \B (y) (s)   \dd s = c_1 -  z ^2 \int_a^x  \int_a^s \B (y) (s) y'(t) \dd t  \dd s,
\end{equation}
we see that
\begin{equation} \label{e im y pr}
\im y' (x) = - \int_a^x  \int_a^s \B (y) (s)  \im [ z ^2y'(t)] \dd t  \dd s .
\end{equation}
Hence, for $x$ slightly greater than $a$,  the complex number $y'(x)$ lies in the same open quadrant as $c_1 - \ii \im  [ z ^2 c_1] $. Due to the equality $y(x) = \int_a^s y'(s) \dd s$, the numbers $y(x)$ lie in the same open quadrant.
So the sign of $c_1$ completely determines $\B (y) (x)$  on $ (a,a+\ep_1(y))$ with a certain $\ep_1 (y)>0$.
As before, it is easy to see that  $\ep_1 (y)$ can be chosen independently of a solution $y$.

\emph{Case 8.} When $c_0 = 0$, $c_1 \in \ii \RR \setminus \{ 0 \}$, and $ z ^2 \in \CC_-$,  one can use arguments similar to Case 7 with the change of (\ref{e im y pr}) to
\begin{equation*} 
\re y' (x) = - \int_a^x  \int_a^s \B (y) (s)  \re [ z ^2y'(t)] \dd t  \dd s .
\end{equation*}
 This completes the proof of uniqueness.

\textbf{Existence.} The above arguments also provide an algorithm for assigning $B_0 (x)$ the values of $b_1 (x) $, or $b_2 (x)$ for $x>a$ such that there exists a solution $y$ to $y'' = -  z ^2 B_0 y$ on a certain interval $(a,a+\ep)$ satisfying the initial conditions (\ref{e ini cond}) and additionally $B_0 (x) = \B (y) (x)$ on $(a,a+\ep)$. Then $y$ is a solution to (\ref{e nonlin}), (\ref{e ini cond}) on $(a,a+\ep)$. This completes the proof.
\end{proof}

Assume that (\ref{e nonlin}), (\ref{e ini cond}) has a unique solution $y$ on  $(x_1, x_2)$ with $x_1<a<x_2 < +\infty$.

\begin{lem}  \label{e lim y y pr}
There exist $\lim_{x \to x_2-} y (x)$ and $\lim_{x \to x_2-} y' (x)$.
\end{lem}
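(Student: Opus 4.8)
The plan is to exploit that, once the solution $y$ has been fixed, the bang--bang coefficient $\B(y)(\cdot)=b_1(\cdot)+(b_2(\cdot)-b_1(\cdot))\ChiCpl(y^2(\cdot))$ is nothing but a fixed measurable function on $(x_1,x_2)$ with the pointwise bound $0\le\B(y)(x)\le b_2(x)$, $b_2\in L^1$. Thus $y$ actually solves the \emph{linear} equation $y''=-z^2\,\B(y)(x)\,y$ with an $L^1$ coefficient on the finite interval $[a,x_2)$, and the claimed one-sided limits amount to the standard fact that solutions of such equations, together with their derivatives, extend absolutely continuously to the endpoint. First I would integrate (\ref{e nonlin z}) with the data (\ref{e ini cond}) to obtain, for $x\in[a,x_2)$,
\[
y(x)=c_0+c_1(x-a)-z^2\!\int_a^x (x-s)\,\B(y)(s)\,y(s)\,\dd s ,\qquad
y'(x)=c_1-z^2\!\int_a^x \B(y)(s)\,y(s)\,\dd s ,
\]
where all integrals are finite since $y$ is continuous on the compact set $[a,x]\subset(x_1,x_2)$ and $b_2\in L^1$.

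The one step with genuine content is an a priori bound that rules out blow-up of $y$ as $x\to x_2-$. Setting $m(x):=\sup_{[a,x]}|y|$ and using $0\le x-s\le x_2-a$ together with $\B(y)\le b_2$ in the first identity, one gets $m(x)\le C+|z|^2(x_2-a)\int_a^x b_2(s)\,m(s)\,\dd s$ with $C:=|c_0|+|c_1|(x_2-a)$. Gronwall's inequality, which is valid with an $L^1$ weight, then gives $m(x)\le C\exp\!\big(|z|^2(x_2-a)\,\|b_2\|_1\big)=:C'$ for every $x\in[a,x_2)$. (Equivalently, one can regard $(y,y')^{\top}$ as the solution of a linear first-order system whose coefficient matrix has entries bounded in modulus by $1+|z|^2 b_2(x)\in L^1$, and apply Gronwall to $|y|+|y'|$.) In particular $y$, and then also $y'$, stay bounded on $[a,x_2)$.

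Once this bound is available the conclusion is routine: the integrand $\B(y)(s)y(s)$ is dominated by $C'b_2(s)\in L^1(a,x_2)$, so $\int_a^{x_2}\B(y)(s)y(s)\,\dd s$ converges absolutely and hence $\lim_{x\to x_2-}y'(x)=c_1-z^2\int_a^{x_2}\B(y)(s)y(s)\,\dd s$ exists. Since $y'$ then has a finite limit at $x_2$ and is continuous on $[a,x_2)$, it is bounded there, so $y(x)=c_0+\int_a^x y'(s)\,\dd s$ tends to $c_0+\int_a^{x_2}y'(s)\,\dd s$ as $x\to x_2-$ (alternatively, split $\int_a^x(x-s)\B(y)y\,\dd s = x\int_a^x\B(y)y\,\dd s-\int_a^x s\,\B(y)y\,\dd s$ and let $x\to x_2-$, using that both $L^1$-integrals converge while $x\to x_2$). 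This yields the existence of both limits; the main obstacle, as indicated, is purely the no-blow-up estimate, everything else being bookkeeping with absolutely convergent $L^1$-integrals.
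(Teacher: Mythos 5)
Your proof is correct and follows essentially the same route as the paper's: the integral reformulation $y(x)=c_0+c_1(x-a)-z^2\int_a^x(x-s)\,\B(y)(s)\,y(s)\,\dd s$, the pointwise bound $|\B(y)(\cdot)|\le b_2(\cdot)\in L^1$, and the Bellman--Gronwall lemma to exclude blow-up, after which the limits of $y'$ and $y$ at $x_2-$ follow from absolute convergence of the $L^1$-integrals. The paper only sketches these steps, and your write-up supplies the same argument in full detail.
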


\begin{proof}
The statement follows from the Bellman-Gronwall lemma, from the integral reformulation of problem (\ref{e nonlin z}), (\ref{e ini cond})  given by the equation
\begin{eqnarray}
 y (x) & = & c_0 + c_1 (x-a) - z^2 \ \int_{a}^x (x-s) \ y (s) \B (y) (s) \dd s ,
\label{e int nonlin}
\end{eqnarray}
and from the fact that $|\B (y) (\cdot)| \le b_2 (\cdot) \in L^1_\RR (x_1,x_2)$ for arbitrary $y$.
\end{proof}

This lemma, the equality $y'(x) =  c_1 - z ^2 \int_a^x y(s) \B (y) (s)  \dd s$, 
and (\ref{e int nonlin})  imply that $y$ and $y'$ can be considered 
as absolutely continuous functions on $[a,x_2]$. 
This arguments allow us to extend $y$ on $[a,a_2]$. 
The resulting solution is unique due to Lemma \ref{l uni B loc}. 
This completes the proof of  Theorem \ref{t uni B}.

\section*{Acknowledgements} 
IK and IV are very grateful to J{\"u}rgen Prestin  for the hospitality of 
the University of L{\"u}beck and were partially supported by the project 
''EU-Ukrainian Mathematicians for Life Sciences'' 
(Marie Curie Actions - International Research Staff Exchange Scheme (IRSES) FP7-People-2011-IRSES, project number 295164). 
IK  was partially supported by the project no.15-1vv{\textbackslash}19 
"Metric spaces, harmonic analysis of functions and operators, singular and nonclassical problems for differential equations" 
of the Faculty of Mathematics and Information Technology at Donetsk National University. 

The authors would like to thank Herbert Koch for the question how optimal structures look like, and 
Michael Weinstein for the question on bands of admissible frequencies.  
Sections \ref{ss period defect} and \ref{ss ex A} were aimed to address these questions. 
IK is very grateful to Rostyslav Hryniv,  ‎Evgenii Yakovlevich Khruslov, and Sergiy Maksymenko for their interest to this research and for opportunities to give talks in the seminars of their departments.

\end{document}